\documentclass[reqno]{amsart}
\usepackage[utf8]{inputenc}
\usepackage[english]{babel}
\usepackage{amstext}
\usepackage{latexsym}
\usepackage{amsmath}
\usepackage{amssymb}
\usepackage{amsfonts}
\usepackage{comment}
\usepackage{graphicx}
\usepackage{amssymb}
\usepackage{amsthm}
\usepackage{epsfig}
\usepackage{tikz}
\usetikzlibrary{matrix,arrows,decorations.pathmorphing}
\usepackage{graphicx}
\usepackage{enumitem}
\usepackage{xcolor}
\allowdisplaybreaks

\usepackage[hidelinks,colorlinks=true,citecolor={red},linkcolor= black]{hyperref}

\input xy
\xyoption{all}
\vfuzz2pt 
\hfuzz2pt 

\newtheorem{defn}{Definition}[section]
\newtheorem{coro}[defn]{Corollary}
\newtheorem{lemma}[defn]{Lemma}
\newtheorem{rem}[defn]{Remark}
\newtheorem{prop}[defn]{Proposition}
\newtheorem{theo}[defn]{Theorem}

\newtheorem{claim}[defn]{Claim}
\newtheorem{ltheorem}{Theorem}

\begin{document}

\title[MME for certain skew products]{exponential mixing of measures of maximal entropy for certain skew products}

\author[K. Marin]{Karina Marin}
\address{Departamento de Matem\'atica, Universidade Federal de Minas Gerais (UFMG), Av. Ant\^onio Carlos 6627, 31270-901. Belo Horizonte -- MG, Brasil}
\email{kmarin@mat.ufmg.br}

\author[M. Poletti]{Mauricio Poletti}
\address{Departamento de Matem\'atica, Universidade Federal do Cear\'a (UFC), Campus do Pici,
Bloco 914, CEP 60455-760. Fortaleza -- CE, Brasil}
\email{mpoletti@mat.ufc.br}

\author[F. Veiga]{Filiphe Veiga}
\address{Departamento de Matem\'atica, Universidade Federal de Minas Gerais (UFMG), Av. Ant\^onio Carlos 6627, 31270-901. Belo Horizonte -- MG, Brasil}
\email{filiphx@ufmg.br}

\thanks{
KM was partially supported by FAPEMIG, CAPES-Finance Code 001, Instituto Serrapilheira grant number Serra-R-2211-41879. MP was partially supported by CAPES-Finance Code 001, Instituto Serrapilheira grant number Serra-R-2211-41879 and FUNCAP grant AJC 06/2022. FV was supported by CAPES}

\begin{abstract}
 We establish a relation between the continuity of the fiber entropy and the continuity of the fiber Lyapunov exponents for skew products with 2-dimensional fibers. This result extends the theorem for surfaces proved by Buzzi-Crovisier-Sarig. As a consequence, we are able to obtain classes of skew products that satisfies the strong positive recurrence (SPR) property, in particular these maps have finite number of measures of maximal entropy, all exponentially mixing with good statistical properties. 
\end{abstract}

\maketitle
\section{introduction}
The complexity of a dynamical system can be measured by a number called entropy. There are two types of entropy that are measured on a continuous transformation $T\colon X\to X$ that preserves an invariant probability measure $\mu$, the \emph{topological entropy} $h_{top}(T)$ and the \emph{metric entropy} $h_\mu(T)$. The topological entropy only depends on $T$ and it turns out that, if $X$ is compact, the topological entropy is the supremum of the metric entropy among all the invariant probabilities measures,
$$
h_{top}(T)=\sup\{h_\mu(T):T_*\mu=\mu\}.
$$
A measure that attains this supremum is called a \emph{Measure of Maximal Entropy (MME)}.

The study of measures of maximal entropy is one of the main topics in dynamical systems. The principal questions are:
\emph{Does there exists a measure of maximal entropy?} \emph{If it exists, is it unique?}  \emph{Are there finite many ergodic ones?} \emph{What statistical properties do they satisfy?}

In the case of Anosov diffeomorphisms, this is well understood. There are finite many MMEs and if the map is transitive, it admits a unique one. In both cases, they all have good statistical properties. See \cite{ruelle1978}, \cite{parry1990}, \cite{gouezel2010}, \cite{guivarc1988}, \cite{kifer1990}. Also there are results for some class of non Anosov systems that satisfies some non-uniform form of hyperbolicity, \cite{young1998}, \cite{young1999}. 

Recently Buzzi, Crovisier and Sarig proved that $C^\infty$ transitive diffeomorphisms with positive entropy have a unique MME \cite{BCS-finite} and it satisfies good statistical properties \cite{BCS-SPR}.

In this work, we deal with skew products, that is, maps of the form
$$
F:M\times N\to M\times N,\quad (x,y)\mapsto (f(x),g_x(y)).
$$

We obtain the following result.

\begin{theo}\label{theo-intro-2}
Let $M$ be a manifold, $N$ be a surface, and $F:M\times N\to M\times N$ a skew product diffeomorphism over $f:M\to M$ such that $f$ is a $C^{1+}$ Anosov diffeomorphism and $F$ acts $C^\infty$ on the fibers. If $h_{top}(F)>h_{top}(f)$, then $F$ has a finite number of ergodic MMEs. Moreover, for each of them there exists $p$ such that $f^p$ has exponential decay of correlations, large deviation, and almost sure invariance principle for the Birkhoff sums.
\end{theo}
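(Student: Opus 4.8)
The plan is to reduce the theorem to the \emph{strong positive recurrence} (SPR) property of a countable Markov shift coding $F$, and then to read off the conclusions from the general theory of SPR shifts. First, for any $F$-invariant probability $\mu$, the Abramov--Rokhlin formula for skew products gives $h_\mu(F)=h_{\mu_M}(f)+h^{\mathrm{fib}}_\mu(F)$, where $\mu_M$ is the projection of $\mu$ to $M$ and $h^{\mathrm{fib}}_\mu(F)$ is the conditional (fiber) entropy. Since $h_{\mu_M}(f)\le h_{top}(f)$, the hypothesis $h_{top}(F)>h_{top}(f)$ forces $h^{\mathrm{fib}}_\mu(F)\ge 2\chi$ for every MME $\mu$, where $2\chi:=h_{top}(F)-h_{top}(f)>0$, and $h^{\mathrm{fib}}_\mu(F)\ge\chi$ whenever $h_\mu(F)>h_{top}(F)-\chi$. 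Because the fibers are surfaces, Ruelle's inequality applied along the fibers to $F$ and to $F^{-1}$ shows that every such $\mu$ is fiberwise hyperbolic, with both fiber Lyapunov exponents of absolute value at least $\chi$. Hence all near-maximal measures of $F$, and in particular all MMEs, are fiberwise $\chi$-hyperbolic for this fixed $\chi>0$.

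Next I would code the dynamics. Combining a Markov partition of the $C^{1+}$ Anosov base $f$ with a Sarig/Ben Ovadia type coding of the fiberwise nonuniformly hyperbolic dynamics at hyperbolicity level $\chi$ --- the construction developed in the previous sections --- produces a countable topological Markov shift $(\Sigma,\sigma)$ and a H\"older continuous map $\pi\colon\Sigma\to M\times N$ with $\pi\circ\sigma=F\circ\pi$, finite-to-one on a set of full measure for every fiberwise $\chi$-hyperbolic ergodic measure and through which each such measure lifts to $\Sigma$ with the same entropy. Consequently $h_{top}(\Sigma)=h_{top}(F)<\infty$, and the ergodic MMEs of $F$ are precisely the $\pi$-images of the ergodic measures of maximal Gurevich entropy of $\Sigma$.

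The crux is to prove that $(\Sigma,\sigma)$ is SPR, i.e. that its entropy at infinity satisfies $h_\infty(\Sigma)<h_{top}(\Sigma)=h_{top}(F)$. Here one uses that, the fibers being $2$-dimensional and $F$ acting $C^\infty$ on them, the fiber dynamics has vanishing tail (local) entropy by Yomdin--Newhouse theory; this is the source of the relation between the continuity of the fiber entropy and the continuity of the fiber Lyapunov exponents established in the paper, in the spirit of Buzzi--Crovisier--Sarig \cite{BCS-finite}. Concretely, a sequence of $\sigma$-invariant measures escaping to infinity in $\Sigma$ must lose all of its fiber entropy in the limit, so the entropy that can escape to infinity is at most $h_{top}(f)<h_{top}(F)$, which gives SPR. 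Making this loss-of-entropy statement precise in the fibered coding, uniformly along escaping sequences, is the main obstacle; the remaining steps are comparatively soft.

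Finally, I would harvest the conclusions. An SPR countable Markov shift of finite entropy has a finite, nonempty set of ergodic measures of maximal entropy, each supported on a positive recurrent irreducible component of maximal Gurevich entropy (Sarig; \cite{BCS-finite}, \cite{BCS-SPR}); pushing these forward by $\pi$ and using the first step to check that no MME of $F$ is missed shows that $F$ has a finite, nonempty set of ergodic MMEs. Fix one of them, $\mu$, lift it to an ergodic measure $\hat\mu$ on the corresponding component, and let $p$ be the period of that component; then the Ruelle transfer operator of $\sigma^p$ on a suitable space of locally H\"older functions has a spectral gap \cite{BCS-SPR}. The spectral gap yields exponential decay of correlations for H\"older observables; its analytic perturbation theory yields the large deviation principle \cite{kifer1990}; and the Gordin--Liverani martingale method, in the form used by Gou\"ezel \cite{gouezel2010}, yields the almost sure invariance principle for Birkhoff sums. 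All of these pass from $(\Sigma,\sigma^p,\hat\mu)$ to $(M\times N,F^p,\mu)$ because $\pi$ is H\"older, finite-to-one on a set of full measure, and semiconjugates the two systems, which would complete the argument.
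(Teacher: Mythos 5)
Your proposal takes a genuinely different route from the paper's, and it also leaves the central step unproved. The paper does not build a countable Markov shift coding of $F$ at all. Instead, it deduces the conclusions from the \emph{diffeomorphism-level} SPR criterion of Buzzi--Crovisier--Sarig (Theorem~3.1 of \cite{BCS-SPR}): a diffeomorphism is SPR as soon as it is entropy hyperbolic (EH) and $\Lambda^+$ is entropy continuous (EC). The paper verifies (EH) and (EC) for $F$ by combining three ingredients: (i) the main technical result, Theorem~\ref{thm-main}, which gives continuity of the fiber Lyapunov exponent $\lambda_1^c$ and an upper bound on the limit fiber entropy along sequences of ergodic measures; (ii) Lemma~\ref{lema-exponents}, which identifies the non-fiber Lyapunov exponents of $F$ with the exponents of the base map $f$; and (iii) Proposition~\ref{prop-beta-1}, which shows that for sequences of ergodic measures with entropy tending to $h_{top}(F)$, the limit is a full-weight ($\beta=1$) MME, the fiber exponent converges, and the base entropy also converges. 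For the Anosov base, the non-fiber exponents are automatically bounded away from zero and vary continuously, so (EH) and (EC) follow. The heavy lifting of coding, spectral gap of the transfer operator, and the probabilistic consequences is then entirely delegated to \cite{BCS-SPR} Theorem~E, which the paper treats as a black box.

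Your route, by contrast, proposes to construct a Sarig/Ben Ovadia type coding adapted to the fibered structure and then prove that the shift has entropy at infinity strictly below $h_{top}(F)$. There is no off-the-shelf fibered Markov coding of this kind, and your sketch of the SPR estimate (``a sequence of $\sigma$-invariant measures escaping to infinity must lose all of its fiber entropy in the limit'') is exactly where the real difficulty lies; you correctly flag this as ``the main obstacle,'' but you do not resolve it. This is precisely the difficulty the paper circumvents: instead of controlling entropy escape in a coding, it reduces SPR to a continuity statement about Lyapunov exponents and fiber entropy (Theorem~\ref{thm-main}), which is accessible by the reparametrization/Yomdin techniques of Sections~5--7. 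Your initial reduction (Abramov--Rokhlin plus fiberwise Ruelle inequality to get uniform fiber hyperbolicity of near-maximal measures) is sound and parallels Proposition~\ref{prop-beta-1} in the paper, but the rest of your plan would require re-deriving substantial parts of \cite{BCS-SPR} rather than invoking its ready-made criterion. In short: the paper's approach is more economical because it proves SPR at the level of the diffeomorphism via the (EH)+(EC) criterion, whereas yours attempts to re-establish SPR at the shift level, and the key uniform loss-of-fiber-entropy estimate in your coding remains an open gap.
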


For skew products with circle fibers, $N=S^1$, the finiteness of MME was proved to be generic when $F$ is partially hyperbolic and $f$ Anosov in \cite{HHTU} and in \cite{MPP} when $f$ is in a class of Derived from Anosov. It is important to notice that in this work we do not require $F$ to be partially hyperbolic.

When $M$ is a surface and $f\in C^\infty$, we can remove the hypothesis of $f$ being Anosov adding a condition on the entropy. Let $h_{top}(F\mid \pi)$ be the supremum of the topological entropy along the fibers $\{x\}\times N$. 

\begin{theo}\label{theo-intro-1}
Let $M$ and $N$ be surfaces and $F:M\times N\to M\times N$ be a $C^\infty$ skew product diffeomorphism over $f:M\to M$. If $h_{top}(F)>\max\{h_{top}(f), h_{top}(F\mid\pi)\}$, then $F$ has a finite number of ergodic MMEs. Moreover, for each of them there exists $p$ such that $f^p$ has exponential decay of correlations, large deviation, and almost sure invariance principle for the Birkhoff sums.
\end{theo}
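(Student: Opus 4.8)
The plan is to derive the theorem from the paper's main technical result — the relation between continuity of fiber entropy and continuity of fiber Lyapunov exponents — together with the symbolic dynamics and thermodynamic formalism underlying the Buzzi--Crovisier--Sarig program. The three statistical conclusions (finiteness of ergodic MMEs, exponential decay of correlations, large deviations, almost sure invariance principle for a power) are, once available, \emph{consequences} of the \emph{strong positive recurrence} (SPR) of a suitable countable Markov model of $F$: SPR gives, on each component of maximal entropy, a transfer operator with a spectral gap after passing to a suitable period $p$, whence all the stated properties follow from the thermodynamic formalism of SPR shifts (Vere-Jones, Sarig, Gou\"ezel, Cyr--Sarig) exactly as in \cite{BCS-SPR}. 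So the task reduces to: (i) building the symbolic model and (ii) proving it is SPR.

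\emph{Hyperbolicity of the relevant measures.} Let $\mu$ be $F$-invariant with $h_\mu(F)$ close to $h_{top}(F)$ (in particular any ergodic MME), and put $\nu=\pi_*\mu$. The Abramov--Rokhlin formula for the skew product gives $h_\mu(F)=h_{\pi_*\mu}(f)+h_\mu(F\mid f)$, with $h_\mu(F\mid f)$ the fiber (relative) entropy. Since $h_\nu(f)\le h_{top}(f)$ and $h_\mu(F\mid f)\le h_{top}(F\mid\pi)$, the two hypotheses $h_{top}(F)>h_{top}(f)$ and $h_{top}(F)>h_{top}(F\mid\pi)$ force $h_\nu(f)\ge c$ and $h_\mu(F\mid f)\ge c$ for a uniform $c>0$ depending only on the gaps. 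Applying the Margulis--Ruelle inequality to $f$ and to $f^{-1}$ (a surface diffeomorphism, with $h_\nu(f)=h_\nu(f^{-1})$) shows $\nu$ is nonuniformly hyperbolic for $f$ with both base exponents at distance $\ge c$ from $0$; applying the Ruelle inequality for the fiber entropy to $F$ and to $F^{-1}$ over $f^{-1}$, and using that $\dim N=2$ so that there is at most one positive fiber exponent, shows in the same way that the two fiber exponents are at distance $\ge c$ from $0$. Hence every ergodic MME — and every measure of sufficiently near-maximal entropy — is uniformly nonuniformly hyperbolic for $F$: all four Lyapunov exponents are $\ge c$ in absolute value.

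\emph{Symbolic model and SPR.} Since $F$ is a $C^\infty$ diffeomorphism of the compact manifold $M\times N$ and all near-maximal measures are $c$-hyperbolic, one obtains (Sarig's construction in the surface case, extended to higher dimension by Ben Ovadia) a locally compact countable Markov shift $\Sigma$ with a H\"older roof and a finite-to-one H\"older semiconjugacy onto $M\times N$ carrying every $c$-hyperbolic ergodic measure, so $h_{top}(\Sigma)=h_{top}(F)$ and every ergodic MME lifts; coding $f$ first and then the fiber-hyperbolic $2$-dimensional dynamics over it exhibits $\Sigma$ as a base coding with a fiber coding on top, which is convenient here. To prove $\Sigma$ is SPR one shows its entropy at infinity is $<h_{top}(\Sigma)$ (Gou\"ezel--Sarig, Iommi--Todd--Velozo). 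For a sequence of ergodic measures escaping to infinity in $\Sigma$, split the escape into the part occurring in the base coordinate and the part occurring in the fibers: the base part is controlled by \cite{BCS-SPR}'s estimate for the surface diffeomorphism $f$ (whose own entropy at infinity is $<h_{top}(f)$), while the fiber part is controlled by this paper's main theorem, whose content is precisely that the defect of upper semicontinuity of the fiber entropy is governed by the degeneration of the fiber Lyapunov exponents — and, the fibers being $2$-dimensional, the fiber Ruelle inequality bounds this defect by the vanishing of the (single) positive fiber exponent, exactly as surface entropy is bounded by the positive exponent. Feeding both estimates into the Abramov--Rokhlin splitting of an escaping measure, and using $h_{top}(F)>\max\{h_{top}(f),h_{top}(F\mid\pi)\}$ to close the arithmetic, yields entropy at infinity $<h_{top}(F)$, i.e. SPR; projecting through the finite-to-one semiconjugacy then gives finitely many ergodic MMEs for $F$ with the asserted statistical properties for a suitable $p$.

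\emph{The main obstacle.} The heart of the matter is the entropy-at-infinity bound. It is \emph{not} a formal consequence of the two individual estimates: the crude bound in which a base escape contributes at most $h_\infty(f)+\sup_\mu h_\mu(F\mid f)$ need not be $<h_{top}(F)$, since in general one only knows $h_{top}(F)\le h_{top}(f)+\sup_\mu h_\mu(F\mid f)$. One must therefore track the loss of fiber entropy that necessarily accompanies an escape in the base (and symmetrically), rather than bounding the two losses independently; this is exactly why the main theorem must be stated quantitatively, as a fiber-entropy/fiber-exponent continuity statement with an explicit semicontinuity defect, and marrying that quantitative form with the surface estimate for the base under the bookkeeping imposed by $h_{top}(F)>\max\{h_{top}(f),h_{top}(F\mid\pi)\}$ is where the real work lies.
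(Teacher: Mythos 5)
Your reduction to \emph{SPR implies the statistical conclusions} is exactly what the paper does, and your hyperbolicity estimate for near-maximal measures (both base and fiber exponents bounded away from $0$ by a uniform $c$, via Abramov--Rokhlin and the Ruelle inequality applied to $f,f^{-1}$ and to the fibers in both time directions) is correct and essentially identical to how the paper verifies what it calls \emph{entropy hyperbolicity} (EH), using Lemma~\ref{lema-exponents} to identify the non-fiber exponents of $F$ with the exponents of $f$. The divergence comes in how SPR itself is established. The paper never constructs a countable Markov shift for $F$: it invokes Theorem~3.1 of \cite{BCS-SPR}, which says that EH together with entropy-continuity of $\Lambda^+$ (EC) already implies SPR in the Pesin-block sense of Definition~\ref{spr}, and then it verifies EC directly. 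EC is obtained from Proposition~\ref{prop-beta-1}, which shows that along any sequence of ergodic measures with $h_{\mu_k}(F)\to h_{top}(F)$, no mass escapes ($\beta=1$), the fiber exponent $\lambda_1^c$ converges, and $h_{\pi_*\mu_k}(f)\to h_{\pi_*\mu}(f)$; the last fact combined with the surface result of \cite{BCS} gives continuity of the base exponent, and $\Lambda^+(\mu_k)=\lambda_1^c(F,\mu_k)+\lambda_1^{Nc}(F,\mu_k)$ then gives EC. The engine of Proposition~\ref{prop-beta-1} is Theorem~\ref{thm-main} together with the Abramov--Rokhlin decomposition and upper semicontinuity of $\nu\mapsto h_\nu(f)$ for $C^\infty$ surface maps, and that \emph{is} precisely the joint base/fiber bookkeeping you point to; the paper performs it at the level of diffeomorphisms, not inside a symbolic coding.

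The genuine gap in your proposal is that the entropy-at-infinity bound for $\Sigma$ — the step on which the whole proof rests in your formulation — is not actually proved. You correctly observe that it is not a formal consequence of bounding base escape and fiber escape separately, and you correctly identify that one must couple the two losses, but you then defer this coupling as ``where the real work lies.'' That deferred coupling is the entire content of the theorem in your framework, so as written the proposal does not close. There is also a secondary issue: passing from the Pesin-block/diffeomorphism notion of SPR in Definition~\ref{spr} (which is what \cite{BCS-SPR} Theorem~E needs to deliver the statistical consequences for $F$ itself, not just for the shift) to the entropy-at-infinity property of a Ben Ovadia coding and back is itself part of the machinery of \cite{BCS-SPR}; invoking it in one direction and then projecting ``through the finite-to-one semiconjugacy'' to get finiteness and mixing for $F$ requires care about the liftability of near-maximal measures and about how SPR of the shift descends to the diffeomorphism, none of which is spelled out. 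The paper's route avoids both problems by never leaving the smooth category.
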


Both results are consequence of a condition called SPR introduced by Buzzi, Crovisier and Sarig in \cite{BCS-SPR}. 

Let $f:M\to M$ be a $C^{1+}$ diffeomorphism on a compact manifold $M$. We recall the definition of Pesin block.

\begin{defn}
Fix \(\chi, \varepsilon > 0\). A \((\chi, \varepsilon)\)-Pesin block is a non-empty set \(\Lambda \subset M\) for which there is a direct sum decomposition \(T_xM = E^s(x) \oplus E^u(x)\) for all \(x \in \bigcup_{n \in \mathbb{Z}} f^n(\Lambda)\), and a uniform number \(C > 0\) such that for any \(n \in \mathbb{Z}\), \(j \geq 0\), and \(y \in \Lambda\), \[
\max\left(\|Df^j|_{E^s(f^n(y))}\|, \|Df^{-j}|_{E^u(f^n(y))}\|\right) \leq C \exp(-\chi j + \varepsilon |n|).
\]
\end{defn}

The SPR property defined in \cite{BCS-SPR} is the following:
\begin{defn}\label{spr}
A diffeomorphism \(f\) of a closed manifold is \textbf{strongly positively recurrent (SPR)}, if there exists \(\chi > 0\) such that for each \(\varepsilon > 0\), there are a Borel \((\chi, \varepsilon)\)-Pesin block \(\Lambda\) and numbers \(h_0 < h_{\text{top}}(f)\) e \(\tau > 0\) as follows:

For every ergodic measure \(\nu\),  
\[
h_\nu(f) > h_0 \implies \nu(\Lambda) > \tau.
\]
\end{defn}

In \cite{BCS-SPR} they proved that SPR diffeomphisms have a finite number of measures of maximal entropy each of them with good statistical properties, as:

\textbf{Exponential decay of correlation for $f^p$}: There exists $p\geq 1$ such that, for every \( \beta > 0 \) there are \( 0 < \theta < 1 \) and \( C > 1 \) such that for all \( \varphi, \psi \) which are \( \beta \)-H\"older, and for every ergodic component \( \mu' \) of \( (\mu, f^p) \), \[
\left| \int \varphi \cdot (\psi \circ f^{np}) \, d\mu' - \int \varphi \, d\mu' \int \psi \, d\mu' \right| \leq C \|\varphi\|_{\beta}' \|\psi\|_{\beta}'' \theta^{np} \quad (\forall n \geq 0).
\]

\textbf{Large deviation of Birkhoff sums}: For $\beta>0$ there exists $c>0$ such that for every $\beta$-H\"older $\psi$ with H\"older norm $1$ and non zero variance $\sigma_{\psi}$, we have
$$
\lim_{n \to \infty} \frac{1}{n} \log \mu \{ x : \psi_n(x) \geq na \} = -I_\psi(a) \text{ for all } 0 < a < c\sigma_\psi^4.
$$
where $\psi_n(x)=\sum_{j=0}^{n-1}\psi(f^j(x))$ and $I_\psi(a)=\frac{a^2}{2\sigma^2_\psi}(1+o(1))$ when $a\to 0$.

\textbf{Almost sure invariance principle}: The stochastic process  $(S_n)_{n \geq 1}$ satisfies the \emph{almost sure invariance principle} with parameter $\sigma \geq 0$ and rate $o(n^\gamma)$ for $0 < \gamma < \frac{1}{2}$, if there exist two stochastic processes $(\widetilde{S}_n)_{n \geq 1}$ and $(\widetilde{B}_t)_{t \geq 0}$ defined on a common standard probability space such that
\begin{enumerate}
    \item the stochastic processes $(\widetilde{S}_n)_{n \geq 1}$ and $(S_n)_{n \geq 1}$ are equal in distribution;
    \item $(\widetilde{B}_t)_{t \geq 0}$ is a standard Brownian motion;
    \item $|\widetilde{S}_n - \sigma \widetilde{B}_n| = o(n^\gamma)$ a.e. as $n \to \infty$.
\end{enumerate}
This is satisfied for $S_n(x)=\sum_{j=0}^{n-1}\psi(f^j(x))$ where $\psi$ is a H\"older function.

There are other implications like central limit theorem, convergence of momentum, etc. For the full list of stochastic consequences of SPR, see \cite[Theorem~E]{BCS-SPR}.

Theorems~\ref{theo-intro-1} and \ref{theo-intro-2} are consequences of the following results.
\begin{ltheorem}\label{thm-anosov}
Let $M$ be a manifold, $N$ be a surface and $F:M\times N\to M\times N$ a skew product diffeomorphism over $f:M\to M$ such that $f$ is a $C^{1+}$ Anosov diffeomorphism and $F$ acts $C^\infty$ on the fibers. If $h_{top}(F)>h_{top}(f)$, then $F$ is SPR.
\end{ltheorem}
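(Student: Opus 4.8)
The plan is to verify Definition~\ref{spr} directly, using that the base $f$ is uniformly hyperbolic so that the only non-uniformity to be controlled lives in the $2$-dimensional vertical cocycle $A(x,y):=D_y g_x(y)$. First I would fix, once and for all, a number $h_0$ with $h_{top}(f)<h_0<h_{top}(F)$, which is possible exactly because of the entropy gap hypothesis. The reference rate $\chi>0$ demanded by Definition~\ref{spr} will be a fixed multiple of $\chi_0:=\tfrac12\bigl(h_0-h_{top}(f)\bigr)>0$, hence independent of the $\varepsilon$ entering later; only the Pesin block $\Lambda$ (and the constant $\tau$) will depend on $\varepsilon$.

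\emph{Step 1: uniform fiber hyperbolicity at high entropy.} Let $\mu$ be an ergodic $F$-invariant measure with $h_\mu(F)>h_0$ and let $\bar\mu$ be its projection to $M$. By the Abramov--Rokhlin formula for skew products, $h_\mu(F)=h_{\bar\mu}(f)+h_\mu(F\mid f)$, so the fiber entropy satisfies $h_\mu(F\mid f)>h_0-h_{top}(f)=2\chi_0>0$. Applying the relative Ruelle inequality to the vertical bundle, $h_\mu(F\mid f)\le(\lambda_1^{fib}(\mu))^{+}+(\lambda_2^{fib}(\mu))^{+}$ where $\lambda_1^{fib}(\mu)\ge\lambda_2^{fib}(\mu)$ are the two Lyapunov exponents of $A$, one gets $\lambda_1^{fib}(\mu)>\chi_0$. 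Since $F^{-1}$ is again a skew product over the Anosov map $f^{-1}$ acting $C^\infty$ on the fibers, with vertical cocycle conjugate to $A^{-1}$ and the same fiber entropy, the identical argument applied to $F^{-1}$ — where now $\lambda_1^{fib}(\mu)>0$ forces the corresponding positive part to vanish — yields $\lambda_2^{fib}(\mu)<-\chi_0$. Thus every ergodic $\mu$ with $h_\mu(F)>h_0$ is fiberwise hyperbolic with a rate $\ge\chi_0$ that is independent of $\mu$; together with the uniform hyperbolicity of $Df$ along the base, all Lyapunov exponents of such $\mu$ are bounded away from $0$ by a constant depending only on $f$, $F$ and $h_0$.

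\emph{Step 2: a uniform Pesin block.} The remaining and main task is to convert the uniform exponent bound of Step~1 into a single $(\chi,\varepsilon)$-Pesin block $\Lambda$ with $\mu(\Lambda)\ge\tau$ for a $\tau>0$ that works simultaneously for all ergodic $\mu$ with $h_\mu(F)>h_0$; it is this uniform quantifier, rather than the pointwise hyperbolicity, that is the real content of SPR. Here I would run a Pliss lemma / hyperbolic-times argument on the vertical cocycle: with $a_j(z):=\log\|A(F^jz)^{-1}\|$ one has $\tfrac1n\sum_{j<n}a_j\to\int\log\|A^{-1}\|\,d\mu\ge-\lambda_2^{fib}(\mu)>\chi_0$ for $\mu$-a.e.\ $z$, while $|a_j|\le\log\max(\|A\|_\infty,\|A^{-1}\|_\infty)=:H<\infty$ uniformly, by compactness of $M\times N$. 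The Pliss lemma then produces, for $\mu$-a.e.\ $z$, a set of integers of lower density $\ge\chi_0/(2H)=:\tau$ at which there is a fiber direction contracted backward at rate $\chi:=\chi_0/2$ with a uniform multiplicative constant $C_0$; the symmetric statement for $A^{-1}$ controls the expanding fiber direction. One takes $\Lambda$ to be the ($\mu$-independent) set of points that are two-sided $(\chi,C_0)$-hyperbolic for the vertical cocycle in this sense, uses a graph transform to upgrade $E^{s/u}_f\oplus E^{s/u}_{fib}$ to the genuine $DF$-invariant splitting of $T(M\times N)$ over the orbit of $\Lambda$, and checks that $\Lambda$ is a $(\chi,\varepsilon)$-Pesin block, the slack $\varepsilon|n|$ absorbing the distortion picked up when a point of $\Lambda$ is pushed to $F^n(\Lambda)$. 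Finally the Pliss density estimate together with $F$-invariance of $\mu$ gives $\mu(\Lambda)\ge\tau$ for every such $\mu$, with $\chi$ and $\tau$ depending only on $\chi_0$ and $H$, and $\Lambda$ on $\varepsilon$. This is exactly Definition~\ref{spr}, so $F$ is SPR.

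I expect Step~2 to be the genuine obstacle: the definition of Pesin block asks for honestly uniform constants and for the splitting to be defined along the whole orbit of $\Lambda$, so one must avoid any measure-by-measure appeal to Oseledets/Pesin theory and instead extract everything from the uniform exponent bound, the compactness of $M\times N$, and the robustness of the hyperbolic-times construction; this is also the point at which the $C^\infty$ action on the fibers enters, through upper semicontinuity of the fiber entropy $\mu\mapsto h_\mu(F\mid f)$ (a Yomdin-type estimate in the fibers), needed to keep the entropy — hence, via Step~1, the exponents — from collapsing along sequences of measures. Equivalently, Steps~1--2 amount to showing that a ``fiber entropy at infinity'' is strictly below $h_{top}(F)-h_{top}(f)$ and then invoking an abstract SPR criterion; the computation in Step~1 is precisely the assertion that if the fiber Lyapunov exponents of a sequence of ergodic measures degenerate, then their fiber entropies tend to $0$.
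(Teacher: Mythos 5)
Your Step~1 is correct and matches what the paper does to establish the ``entropy hyperbolicity'' part: the entropy gap plus the fibered Ruelle inequality (applied to $F$ and $F^{-1}$) forces $\lambda_1^{c}(F,\mu)>\chi_0>0>-\chi_0>\lambda_2^{c}(F,\mu)$ uniformly for all ergodic $\mu$ with $h_\mu(F)>h_0$, and the base exponents are uniformly bounded away from $0$ since $f$ is Anosov (cf.\ Lemma~\ref{lema-exponents}). The gap is in Step~2, and it is fatal as written. If $0$ is a one-sided $\chi$-Pliss time for the scalar cocycle $a_j=\log\|A|_{E^s}\|$ at $y$, then for $n\ge 1$ all you can say about the product started at $f^n(y)$ is
\[
\prod_{i=n}^{n+j-1}\|A|_{E^s(F^i y)}\|=\frac{\prod_{i=0}^{n+j-1}}{\prod_{i=0}^{n-1}}\le \frac{C_0e^{-\chi(n+j)}}{e^{-Hn}}=C_0\,e^{-\chi j+(H-\chi)n},
\]
since the Pliss condition only gives an \emph{upper} bound on $\prod_{i=0}^{n-1}$ while the lower bound is the crude $e^{-Hn}$ coming from compactness. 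The slowly varying factor you obtain is $e^{(H-\chi)|n|}$, not $e^{\varepsilon|n|}$, and $H-\chi$ has no reason to be small. Obtaining a genuine $(\chi,\varepsilon)$-Pesin block requires a lower bound of the form $\prod_{i=0}^{n-1}\ge c\,e^{-(\chi+\varepsilon)n}$, i.e.\ tempered fluctuations, which is precisely what measure-by-measure Pesin theory gives but which is not uniform over $\mu$ and is not produced by the Pliss lemma. So the set $\Lambda$ of two-sided $(\chi,C_0)$-Pliss points is well-defined and $\mu(\Lambda)\ge\tau$ by the density estimate, but it is not a $(\chi,\varepsilon)$-Pesin block in the sense of Definition~\ref{spr}.

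The ingredient you are missing is the continuity of $\Lambda^+$ along sequences of measures whose entropy tends to $h_{top}(F)$ (the condition (EC) of \cite{BCS-SPR}). The paper does not attempt a direct Pliss/hyperbolic-times construction: it verifies (EH) and (EC) and then invokes \cite[Theorem~3.1]{BCS-SPR}, which is where the uniform Pesin block is actually extracted. Proving (EC) is the heart of the matter here, and it is not the elementary ``big entropy $\Rightarrow$ big exponents'' of your Step~1 (which, as you note, is a consequence of the Ruelle inequality); it is the much harder converse-type statement that the fiber exponent $\lambda_1^c(F,\cdot)$ varies \emph{continuously} when the fiber entropy does, established in Theorem~\ref{thm-main} by adapting the Buzzi--Crovisier--Sarig reparametrization/Yomdin machinery to skew products, and then used in Proposition~\ref{prop-beta-1} together with the continuity of $\Lambda^+$ for the Anosov base. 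Your closing remark about an ``entropy at infinity'' criterion correctly intuits the structure of the argument, but Step~2 as sketched neither proves that criterion nor supplies the continuity input it needs.
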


\begin{ltheorem}\label{thm-smooth-2}
Let $M$ and $N$ be surfaces and $F:M\times N\to M\times N$ be a $C^\infty$ skew product diffeomorphism over $f:M\to M$. If $h_{top}(F)>\max\{h_{top}(f),h_{top}(F\mid\pi)\}$, then $F$ is SPR.
\end{ltheorem}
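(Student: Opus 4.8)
The plan is to verify the SPR condition of Definition~\ref{spr} for $F$ directly, producing the required $(\chi,\varepsilon)$-Pesin block inside $M\times N$ by gluing a ``base'' block, coming from the $C^\infty$ surface dynamics of $f$, to a ``fiber'' block, coming from the $C^\infty$ dynamics of $F$ along the $2$-dimensional fibers. The two pieces are tied together through the Abramov--Rokhlin formula $h_\mu(F)=h_{\pi_*\mu}(f)+h_\mu(F\mid\pi)$, valid for every $F$-invariant probability $\mu$, together with the skew-product structure of the derivative cocycle: the subbundle $\{0\}\oplus TN$ is $DF$-invariant and the cocycle induced on the quotient $T(M\times N)/(\{0\}\oplus TN)\cong\pi^{*}TM$ is $Df$, so the Lyapunov spectrum of $(F,\mu)$ is the union of that of $(f,\pi_*\mu)$ (the \emph{base exponents}) and the \emph{fiber exponents}, with the Oseledets splitting of $F$ adapted to $TM\oplus TN$.

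First I would fix $h_0$ with $\max\{h_{top}(f),h_{top}(F\mid\pi)\}<h_0<h_{top}(F)$, possible by hypothesis, and take an arbitrary ergodic $F$-invariant $\mu$ with $h_\mu(F)>h_0$; set $\nu=\pi_*\mu$, which is $f$-ergodic as a factor of $\mu$. Since $h_\mu(F\mid\pi)\le h_{top}(F\mid\pi)$ by the relative variational principle, one gets $h_\nu(f)=h_\mu(F)-h_\mu(F\mid\pi)>h_0-h_{top}(F\mid\pi)>0$, and symmetrically $h_\mu(F\mid\pi)=h_\mu(F)-h_\nu(f)>h_0-h_{top}(f)>0$. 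Applying Ruelle's inequality on the surface $M$ to $f$ and to $f^{-1}$ then forces both base exponents of $\mu$ to have modulus at least $h_\nu(f)$, while applying the relative Ruelle inequality to the skew products $F$ and $F^{-1}$ — the latter being again a skew product, now over $f^{-1}$, acting $C^\infty$ on the same surface fibers — forces both fiber exponents of $\mu$ to have modulus at least $h_\mu(F\mid\pi)$. Hence there is a constant $\chi_0>0$ depending only on $h_0$ such that, uniformly over all ergodic $\mu$ with $h_\mu(F)>h_0$, the Oseledets splitting of $\mu$ is $\mu$-a.e. of the form $E^s\oplus E^u$ with all exponents along $E^s$ below $-\chi_0$ and all exponents along $E^u$ above $\chi_0$.

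It then remains to turn this uniform hyperbolicity gap into an actually \emph{uniform} Pesin block, i.e. one charged, with mass bounded below, by all such $\mu$ at once. Because $f$ is $C^\infty$ on the surface $M$, the surface results of \cite{BCS-finite, BCS-SPR} provide, for each $\varepsilon>0$, a common $(\chi_0,\varepsilon)$-Pesin block $\Lambda_M\subset M$ whose $\nu$-measure is close to $1$ for every ergodic $\nu$ with base hyperbolicity at least $\chi_0$, in particular for all the $\nu=\pi_*\mu$ above. For the fibers I would invoke the main technical result of the paper, the extension to $2$-dimensional fibers of the BCS relation between continuity of the fiber entropy and continuity of the fiber Lyapunov exponents, which yields likewise, for each $\varepsilon>0$, a common ``fiber Pesin set'' $\Lambda_N$ of $\mu$-measure close to $1$ for all such $\mu$. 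Intersecting $\pi^{-1}(\Lambda_M)$ with $\Lambda_N$, and using that the off-diagonal term $\partial_x g_x$ of $DF$ is bounded (by compactness and smoothness) so that the stable and unstable bundles of $F$ over the intersection inherit the uniform estimates of their base and fiber parts, produces a single $(\chi,\varepsilon)$-Pesin block $\Lambda\subset M\times N$ for $F$ with $\mu(\Lambda)>\tau>0$ for every ergodic $\mu$ with $h_\mu(F)>h_0$, with $\chi,\tau$ independent of $\mu$. This is exactly Definition~\ref{spr}, so $F$ is SPR; combined with the results of \cite{BCS-SPR} recalled in the introduction this also yields Theorem~\ref{theo-intro-1}.

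The principal obstacle is the fiber step above: proving that a family of $F$-invariant measures with fiber exponents bounded away from $0$ is uniformly Pesin-regular along the fibers, equivalently that the defect of upper semicontinuity of the fiber entropy is governed by the fiber Lyapunov exponents. This is where the $C^\infty$ smoothness along the fibers is used essentially, through a Yomdin-type argument as in BCS, and where $\dim N=2$ enters: two fiber exponents of opposite sign give a genuine stable/unstable fiber dichotomy, exactly as on a surface. Two routine verifications should accompany the above: that $F^{-1}$ is again an admissible skew product, over the $C^{1+}$ (indeed $C^\infty$) diffeomorphism $f^{-1}$, with fiber maps $g_x^{-1}$, so that — using invariance of the metric, and hence the fiber, entropy under time reversal — the relative Ruelle inequality may be applied in both time directions; and that the fiberwise $\log$-norms of $Dg_x^{\pm 1}$ are integrable, which is immediate from compactness and smoothness, so that the relative Ruelle inequality applies as stated.
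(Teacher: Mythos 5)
Your proposal takes a genuinely different route from the paper, but the route has a real gap at its central step. The paper does not construct the Pesin block by hand: it invokes Theorem~3.1 of \cite{BCS-SPR}, which says that a diffeomorphism satisfying (EH) \emph{entropy hyperbolicity} and (EC) \emph{entropy continuity of $\Lambda^+$} is SPR. The paper then reduces the problem to verifying (EH), via Lemma~\ref{lema-exponents} and Ruelle-type inequalities (essentially your first paragraph), and to verifying (EC), via Proposition~\ref{prop-beta-1} — itself a consequence of Theorem~\ref{thm-main}. Your plan never establishes (EC), and this is exactly where your argument breaks down.

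Concretely, the sentence claiming that ``the surface results of \cite{BCS-finite,BCS-SPR} provide a common $(\chi_0,\varepsilon)$-Pesin block $\Lambda_M\subset M$ whose $\nu$-measure is close to $1$ for every ergodic $\nu$ with base hyperbolicity at least $\chi_0$'' is false as stated: SPR gives Pesin blocks of large measure for measures of high \emph{entropy} (see Definition~\ref{spr}), not of high hyperbolicity. A Dirac mass at a hyperbolic periodic orbit has arbitrarily large hyperbolicity but charges no prescribed Pesin block. A uniform spectral gap over a family of measures is strictly weaker than what SPR demands: one needs a recurrence/tightness statement that is precisely the content of (EC). Likewise, ``invoking Theorem~\ref{thm-main} to get a common fiber Pesin set $\Lambda_N$'' is not a step you can take directly: Theorem~\ref{thm-main} is a continuity statement for exponents along sequences, and converting it into a uniform Pesin block for a whole family of measures is the hard part, not a corollary. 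Finally, the ``gluing'' of $\pi^{-1}(\Lambda_M)$ and $\Lambda_N$ is not straightforward: because $DF$ is block-triangular rather than block-diagonal, the Oseledets subspaces of $F$ corresponding to the base exponents are \emph{not} contained in $TM$ and their angle with the fiber direction is not controlled by the two blocks separately; the paper deals with this via Lemma~\ref{lema-exponents}, which only identifies the projections $D\pi\,E_i^{Nc}=E_{f,i}$ and the exponents, not the uniform regularity you would need. To repair the argument you should follow the paper's reduction: prove (EH) (your first paragraph is essentially this), prove (EC) using Proposition~\ref{prop-beta-1} and Lemma~\ref{lema-exponents}, and conclude by Theorem~3.1 of \cite{BCS-SPR}.
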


In order to establish these theorems, we prove our main technical result, Theorem~\ref{thm-main}. Its statement relates the continuity of the fiber entropy to the continuity of the fiber Lyapunov exponents for skew products with two-dimensional fibers. 

In the next section, we introduce the definitions needed to precisely state Theorem~\ref{thm-main}. Our result can be viewed as an adaptation of the main theorem of \cite{BCS}, which studies the continuity of Lyapunov exponents with respect to entropy for surface diffeomorphisms. Since \cite{BCS} is a long and highly technical paper, rather than repeating all the calculations, we focus in the key adaptations and differences required in our case. We would like to highlight two of them: the entropy formulas for the fiber entropy developed in Section 5 and the reparametrizations results in Section 6.

\section{Preliminaries and statement of Theorem \ref{thm-main}}\label{sec.preliminaries}
In the following, we let $M$ be a compact metric space and $N$ denote a smooth, compact Riemannian manifold without boundary. Additionally, we assume that $N$ is a surface.

\subsection{Skew products} We denote by $\operatorname{Diff}^\infty(N)$ the space of $C^{\infty}$ diffeomorphisms of $N$, equipped with the Whitney $C^{\infty}$ topology.

Let $f: M \rightarrow M$ be a homeomorphism and $g_x\in \operatorname{Diff}^\infty(N)$ for every $x\in M$.

\begin{defn}[Skew product] By a skew product $F$ on \(M\times N\) we mean a homeomorphism \(F: M\times N \rightarrow M\times N\) defined by
\begin{align*}
F:M\times N\;&\longrightarrow \;M\times N\\
(x, y)\;\; &\mapsto \left(f(x), g_x(y)\right),
\end{align*}
such that the following maps are continuous,
\begin{align*}
\begin{array}{rcl}
M &\longrightarrow& \operatorname{Diff}^{\infty}(N) \\
x &\longmapsto& g_x
\end{array}
\quad & \text{and}\quad
\begin{array}{rcl}
M &\longrightarrow& \operatorname{Diff}^{\infty}(N) \\
x &\longmapsto& g^{-1}_x.
\end{array}
\end{align*}
\end{defn}

\begin{defn}
Let $F_k(x, y) = \left(f_k(x), g_{k,x}(y)\right)$. We say that the sequence of skew-products $F_k$ converges to $F(x, y) = \left(f(x), g_x(y)\right)$ if the following conditions are satisfied:
\begin{enumerate}
    \item $f_k \to f$ uniformly, and
    \item $g_{k,x} \to g_x$ in the $C^\infty$ topology uniformly in $x\in M$.
\end{enumerate}
\end{defn}
\subsection{Lyapunov exponents} Assume that the skew product map $F$ is equipped with an $F$-invariant measure $\mu$. According to Furstenberg-Kesten's Theorem \cite{FK}, the following limits, 
$$
\begin{aligned}
& \lambda_1^c(F,x,y)=\lim_{n \rightarrow \infty} \frac{1}{n} \log \left\|D_y g_x^n \right\|, \\
& \lambda_2^c(F,x,y)=\lim_{n \rightarrow \infty} \frac{1}{n} {\log \left\|D_y (g_x^{n})^{-1}\right\| },^{-1}
\end{aligned}
$$
exists for $\mu$-a.e. $(x,y)$, where
$$
g_x^n= \begin{cases}\mathrm{Id} & \text { if } n=0 \\ g_{f^{n-1}(x)} \circ \cdots \circ g_{f(x)} \circ g_{x} & \text { if } n>0 \\\left(g_{f^n(x)}\right)^{-1} \circ \cdots \circ\left(g_{f^{-1}(x)}\right)^{-1}& \text { if } n<0\end{cases}
$$
and
$$
\lambda_1^c(F,x,y)\geq \lambda_2^c(F,x,y).
$$
The numbers $\lambda_j^c(F,x,y)$ are called \textit{fiber Lyapunov exponents of} $F$ with respect the measure $\mu$. For $j=1,2$, define $$\lambda_j^c(F,\mu)=\int \lambda_j^c(F,x,y)\; d\mu.$$

If $\lambda_1^c(F,x,y)\neq \lambda_2^c(F,x,y)$ then, by Oseledets Theorem \cite{O}, there is a measurable splitting
$$
\{x\} \times T_y N = E_1(x, y) \oplus E_2(x, y),
$$
such that
$$
E_j{}(x,y):=\left\{v \in T_y N\setminus \{0\} : \lim_{n \rightarrow \pm \infty} \frac{1}{n} \log \left\|D_y g_x^n v\right\| =\lambda_j^c(F, x,y)\right\} \cup\{0\}.
$$
The subspaces $E_j(x, y)$ are invariant under $F$. 

If $\mu$ is ergodic, then for $\mu$-a.e. $(x,y)$, $\lambda_j^c(F,x,y)=\lambda_j^c(F, \mu)$ and the dimensions of the subspaces $E_j(x, y)$ are constant. 

\begin{defn}
Let $F$ be a skew product and let $\mu$ be an $F$-invariant and ergodic measure. We say that $\mu$ is of \textit{saddle} type if $\lambda_1^c(F,\mu)>0>\lambda_2^c(F,\mu)$.
\end{defn}

\subsection{Fiber entropy} 

Let $\pi$ denote the canonical projection $\pi: M \times N \rightarrow M$. For each $x \in M$, we define the fiber of the product manifold $M \times N$ over $x$ as
$$
N_x := \{x\} \times N = \pi^{-1}(x).
$$

Let $\mu$ be an $F$-invariant probability measure and $\nu = \pi_*\mu$. Consider the family \(\{\mu_x\}_{x \in M}\) of conditional measures along the fibers \(N_x\) (see Rokhlin \cite{Rokhlin}). This family is uniquely determined up to a \(\nu\)-null set.

Given a measurable partition $\mathcal{P}$ of $N$, define
$$
\mathcal{P}_x^n=\bigvee_{j=1}^n g_x^{-1} \circ g_{f(x)}^{-1} \cdots \circ g_{f^{j-1}(x)}^{-1} \mathcal{P}.
$$
Suppose that
\begin{equation}\label{E1}
\int_{M} H_x(\mathcal{P}) d \nu<\infty
\end{equation}
where $H_x(\mathcal{P})=-\sum_{P \in \mathcal{P}} \mu_x(P) \log \mu_x(P)$.

\begin{theo}[\cite{Abramov1962}] \label{thm-ab-rok}
Let $F$ be a skew product, $\mu$ an $F$-invariant probability measure and $\nu = \pi_*\mu$. For every \(\mathcal{P}\) satisfying (\ref{E1}),
\[
h_\mu(F \mid \pi, \mathcal{P}) := \lim_{n \rightarrow \infty} \frac{1}{n} \int_M H_x\left(\mathcal{P}_x^n\right) d\nu.
\]  
This limit exists and is finite. Moreover, if we define
\[
h_\mu(F \mid \pi) = \sup_\mathcal{P} h_\mu(F \mid \pi, \mathcal{P}),
\]
then,
\begin{equation}\label{E2}
h_\mu(F) = h_{\nu}(f) + h_\mu(F \mid \pi).
\end{equation}
\end{theo}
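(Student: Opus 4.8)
The plan is to follow the classical Abramov--Rokhlin argument adapted to the skew-product setting, which proceeds in two stages: first establish existence of the limit defining the fiber entropy $h_\mu(F\mid\pi,\mathcal{P})$ via subadditivity, and then derive the addition formula \eqref{E2} by a careful bookkeeping of partitions on the product space. For the first stage, I would fix $\mathcal{P}$ satisfying \eqref{E1} and set $a_n=\int_M H_x(\mathcal{P}_x^n)\,d\nu$. Using the standard identity $H(\mathcal{A}\vee\mathcal{B})\le H(\mathcal{A})+H(\mathcal{B})$ for the conditional entropies $H_x$, together with the cocycle relation $\mathcal{P}_x^{n+m}=\mathcal{P}_x^n\vee\big(g_x^{-1}\circ\cdots\circ g_{f^{n-1}(x)}^{-1}\big)\mathcal{P}_{f^n(x)}^m$ and the $F$-invariance of $\mu$ (which gives $\int_M H_x\big((g_x^{-1}\cdots)\mathcal{Q}_{f^n(x)}\big)\,d\nu=\int_M H_x(\mathcal{Q}_x)\,d\nu$ after pushing forward by $f^n$ and using $f_*\nu=\nu$), one checks that $a_{n+m}\le a_n+a_m$. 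Finiteness of $a_1<\infty$ is exactly hypothesis \eqref{E1}, so Fekete's subadditive lemma yields the existence and finiteness of the limit $h_\mu(F\mid\pi,\mathcal{P})=\lim_n a_n/n=\inf_n a_n/n$.

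For the addition formula \eqref{E2}, I would work on $M\times N$ with partitions of the form $\widehat{\mathcal{Q}}\vee(\mathcal{P}\circ\pi^{-1})$, where $\widehat{\mathcal{Q}}=\mathcal{Q}\circ\pi$ is lifted from a partition $\mathcal{Q}$ of $M$ and $\mathcal{P}\circ\pi^{-1}$ denotes the partition of $M\times N$ whose fiber over each $x$ is $\mathcal{P}$. The key algebraic identity is that, under $F$, the $n$-th refinement $\bigvee_{j=0}^{n-1}F^{-j}(\widehat{\mathcal{Q}}\vee\mathcal{P}\circ\pi^{-1})$ decomposes, modulo $\pi$, as the lift of $\bigvee_{j=0}^{n-1}f^{-j}\mathcal{Q}$ together with, on each fiber $N_x$, the partition $\mathcal{P}_x^n$ (this is where the twisted refinement $\mathcal{P}_x^n$ comes from). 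Applying the Rokhlin disintegration formula
\[
H_\mu\big(\mathcal{R}\big)=H_\nu\big(\pi(\mathcal{R})\big)+\int_M H_x\big(\mathcal{R}\mid N_x\big)\,d\nu
\]
to $\mathcal{R}=\bigvee_{j=0}^{n-1}F^{-j}(\widehat{\mathcal{Q}}\vee\mathcal{P}\circ\pi^{-1})$, dividing by $n$, and letting $n\to\infty$ gives $h_\mu(F,\widehat{\mathcal{Q}}\vee\mathcal{P}\circ\pi^{-1})=h_\nu(f,\mathcal{Q})+h_\mu(F\mid\pi,\mathcal{P})$. Taking the supremum over $\mathcal{Q}$ and over $\mathcal{P}$ — and arguing that partitions of the form $\widehat{\mathcal{Q}}\vee\mathcal{P}\circ\pi^{-1}$ are cofinal for computing $h_\mu(F)$, since $\pi$ is a factor map and any finite partition of $M\times N$ can be refined by one of this shape up to arbitrarily small entropy error using a generating sequence — yields \eqref{E2}.

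The main obstacle is the measurability and integrability bookkeeping rather than the algebra: one must justify interchanging the supremum over $\mathcal{P}$ with the limit in $n$, control the contribution of partitions of $N$ that are not finite (working with countable partitions of finite conditional entropy), and verify that the conditional measures $\{\mu_x\}$ transform correctly under $g_x$ so that $H_{f(x)}(\mathcal{P})$ relates to $H_x(g_x^{-1}\mathcal{P})$ $\nu$-a.e. Since this is precisely the content of Abramov--Rokhlin's original paper \cite{Abramov1962}, and the skew-product structure here only makes the base-to-fiber factorization cleaner, I would cite \cite{Abramov1962} for the general relative entropy formula and limit myself to indicating how the twisted refinement $\mathcal{P}_x^n$ arises from $F^{-j}$ acting on fiber partitions, which is the only point where the explicit form $g_x^{-1}\circ g_{f(x)}^{-1}\circ\cdots$ enters.
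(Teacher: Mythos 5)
The paper does not supply its own proof of this theorem: it is stated as a citation to Abramov--Rokhlin, so there is no proof in the paper to compare against. Your proposal is a sound reconstruction of the classical argument. The subadditivity step is correct: with the paper's convention $\mathcal{P}_x^n=\bigvee_{j=1}^n (g_x^j)^{-1}\mathcal{P}$, the cocycle identity $\mathcal{P}_x^{n+m}=\mathcal{P}_x^n\vee (g_x^n)^{-1}\mathcal{P}_{f^n(x)}^m$ holds, and the invariance $\int_M H_x\big((g_x^n)^{-1}\mathcal{A}_{f^n(x)}\big)\,d\nu=\int_M H_x(\mathcal{A}_x)\,d\nu$ follows from $(g_x^n)_*\mu_x=\mu_{f^n(x)}$ (a consequence of $F_*\mu=\mu$ and uniqueness of disintegrations) and $f^n_*\nu=\nu$; Fekete then gives $a_{n+m}\le a_n+a_m$ and the limit. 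Your second stage also aligns with the standard route: lifting partitions $\mathcal{Q}$ of $M$ and $\mathcal{P}$ of $N$ to $M\times N$, checking that $\bigvee_{j=0}^{n-1}F^{-j}$ of the product partition yields $\pi^{-1}\big(\bigvee_{j=0}^{n-1}f^{-j}\mathcal{Q}\big)$ on the base and the twisted refinement on the fiber (here beware the paper's indexing, which starts the join at $j=1$ and omits $\mathcal{P}$ itself; the fiber partition you obtain is $\mathcal{P}\vee\mathcal{P}_x^{n-1}$, but this off-by-one does not affect the limit), applying the Rokhlin conditional-entropy decomposition, dividing by $n$ and passing to the limit. The two genuine technical points you correctly flag — the verification that $\{\mu_x\}$ transforms as stated, and the cofinality of rectangular partitions $\pi^{-1}\mathcal{Q}\vee(M\times\mathcal{P})$ for computing $h_\mu(F)$ — are exactly the content handled in \cite{Abramov1962}; the cofinality claim holds because rectangles generate the Borel $\sigma$-algebra of $M\times N$ for second-countable $M,N$, so any finite partition can be approximated in entropy by one coarser than such a product. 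No gap in the outline.
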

The quantity \(h_\mu(F \mid \pi)\) is called the \textit{fiber entropy} of the skew product $F$.

\begin{theo}[\cite{Bahnmuller1995}]\label{T2}
For a skew product $F$ and an $F$-invariant and ergodic measure \(\mu\), the following Margulis-Ruelle inequality type holds,
\[
h_\mu(F \mid \pi) \leq \sum_{\lambda_j^c(F,\mu)> 0} \lambda_j^c(F,\mu) \operatorname{dim} E_j.
\]
\end{theo}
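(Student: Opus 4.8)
The statement is the fibered Margulis--Ruelle inequality, and my plan is to adapt Ruelle's classical argument (in Ma\~n\'e's streamlined form), carrying the geometric estimates through the non-autonomous family $\{g_x^m\}$; this is essentially the scheme of Bahnm\"uller. Assume $\mu$ is ergodic, as in the statement. First I would reduce to high powers of $F$: the map $F^m$ is again a skew product, over $f^m$, with fibre maps $g_x^m$, and $\pi_*\mu=\nu$ is still $f^m$-invariant, so the Abramov--Rokhlin formula \eqref{E2} applied to $F$ and to $F^m$, together with $h_\mu(F^m)=m\,h_\mu(F)$ and $h_\nu(f^m)=m\,h_\nu(f)$, gives $h_\mu(F^m\mid\pi)=m\,h_\mu(F\mid\pi)$. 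Hence it suffices to bound $h_\mu(F^m\mid\pi)$ for each $m$ and then divide by $m$, letting $m\to\infty$ at the end.

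Second, a one-step bound. For a finite partition $\mathcal P$ of $N$ write $\mathcal P_x^{(m),n}=\bigvee_{j=1}^n (g_x^{mj})^{-1}\mathcal P$, so that $h_\mu(F^m\mid\pi,\mathcal P)=\lim_n\frac1n\int_M H_x(\mathcal P_x^{(m),n})\,d\nu$ by Theorem~\ref{thm-ab-rok} applied to $F^m$. Telescoping $\mathcal P_x^{(m),n}=(g_x^{mn})^{-1}\mathcal P\vee\mathcal P_x^{(m),n-1}$, using that $\mathcal P_x^{(m),n-1}$ refines $(g_x^{m(n-1)})^{-1}\mathcal P$, the cocycle identity $g_x^{a+b}=g_{f^a(x)}^b\circ g_x^a$, and the disintegration relation $(g_x^k)_*\mu_x=\mu_{f^k(x)}$, one obtains $H_x(\mathcal P_x^{(m),n})\le\sum_{k=0}^{n-1}\Phi_m(f^{mk}x)$ with $\Phi_m(x):=H_{\mu_x}((g_x^m)^{-1}\mathcal P\mid\mathcal P)$; integrating against the $f$-invariant $\nu$ yields $h_\mu(F^m\mid\pi,\mathcal P)\le\int_M\Phi_m\,d\nu$.

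Third, the geometric estimate. Take $\mathcal P$ a grid partition of $N$ (built in finitely many charts) with atoms of diameter $\le\varepsilon$; it is enough to treat these, since any finite $\mathcal Q$ is refined by $\mathcal Q\vee\mathcal P$, which is still $\varepsilon$-fine, and finite partitions suffice in the supremum $h_\mu(F^m\mid\pi)=\sup_{\mathcal P}h_\mu(F^m\mid\pi,\mathcal P)$. The push-forward of $\mu_x|_P$ by $g_x^m$ is carried by $g_x^m(P)$, so $\Phi_m(x)\le\sum_P\mu_x(P)\log\#\{P'\in\mathcal P:P'\cap g_x^m(P)\ne\emptyset\}$. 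Since $g_x^m$ is $C^1$ and $x\mapsto g_x^m$ is continuous into $C^1$ with $M,N$ compact, $g_x^m(P)$ lies within $\varepsilon\rho_m(\varepsilon)$ of the ellipse $D_yg_x^m(B(0,\varepsilon))$ for any $y\in P$, with $\rho_m(\varepsilon)\to0$ as $\varepsilon\to0$; such a set meets at most $C_N\prod_{i=1}^2\max(1,\sigma_i(D_yg_x^m))=C_N\exp\Lambda_m^+(x,y)$ grid atoms, where $C_N$ depends only on $N$ and $\Lambda_m^+(x,y):=\max\{0,\log\|D_yg_x^m\|,\log|\det D_yg_x^m|\}\ge0$. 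Uniform continuity of $(x,z)\mapsto\Lambda_m^+(x,z)$ then gives $\Phi_m(x)\le\log C_N+\int_N\Lambda_m^+(x,z)\,d\mu_x(z)+o_\varepsilon(1)$ uniformly in $x$, hence, integrating, letting $\varepsilon\to0$, and taking the supremum over $\mathcal P$, $h_\mu(F^m\mid\pi)\le\log C_N+\int_{M\times N}\Lambda_m^+\,d\mu$.

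Finally I would let $m\to\infty$. By Furstenberg--Kesten $\tfrac1m\log\|D_yg_x^m\|\to\lambda_1^c(F,x,y)$, and $\log|\det D_yg_x^m|$ is a Birkhoff sum over $F$ of the bounded function $\log|\det D_yg_x|$, so $\tfrac1m\log|\det D_yg_x^m|\to\lambda_1^c(F,\mu)+\lambda_2^c(F,\mu)$ a.e.; checking the three sign cases for $\lambda_1^c\ge\lambda_2^c$ shows $\tfrac1m\Lambda_m^+=\max\{0,\tfrac1m\log\|D_yg_x^m\|,\tfrac1m\log|\det D_yg_x^m|\}\to\sum_{\lambda_j^c(F,\mu)>0}\lambda_j^c(F,\mu)\dim E_j$ a.e. Since $0\le\tfrac1m\Lambda_m^+$ is bounded by a fixed constant (the suprema over $M\times N$ of $\log^+\|D_yg_x\|$ and $\log^+|\det D_yg_x|$), dominated convergence gives $\tfrac1m\int\Lambda_m^+\,d\mu\to\sum_{\lambda_j^c(F,\mu)>0}\lambda_j^c(F,\mu)\dim E_j$, and with $\tfrac1m\log C_N\to0$ we conclude $h_\mu(F\mid\pi)=\tfrac1m h_\mu(F^m\mid\pi)\le\sum_{\lambda_j^c(F,\mu)>0}\lambda_j^c(F,\mu)\dim E_j$. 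I expect the main obstacle to lie in the second and third steps: keeping the shifting conditional measures $\mu_x\mapsto\mu_{f^k(x)}$ straight through the telescoping, and making the cube count locally uniform in the base point $x\in M$ — exactly the places where Ruelle's single-map argument needs genuine (if routine) modification for skew products.
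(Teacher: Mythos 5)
The paper does not prove Theorem~\ref{T2}; it simply cites Bahnm\"uller--Bogensch\"utz \cite{Bahnmuller1995}. Your argument is a correct reconstruction of exactly the scheme used there: reduce to $F^m$ via Abramov--Rokhlin, telescope the conditional entropy one fiber-step at a time using $(g_x^k)_*\mu_x=\mu_{f^k(x)}$, bound the per-step term by counting grid atoms hit by $g_x^m(P)$ (the two-dimensional identity $\prod_i\max(1,\sigma_i)=\max(1,\|D\|,|\det D|)$ is what makes your $\Lambda_m^+$ work here), and pass to the limit with Furstenberg--Kesten and dominated convergence; the bookkeeping with the shifting conditional measures and the cocycle identity is handled correctly. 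So this matches the cited proof rather than differing from it, and I have nothing to add beyond the small remark that the cube-count step as you phrase it is dimension-2 specific, which is consistent with the paper's standing assumption that $N$ is a surface.
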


Since the  fiber entropy coincides with the conditional entropy of $F$ with respect to $\pi^{-1} \mathcal{B}$, where $\mathcal{B}$ is the Borel $\sigma$-algebra of $f$, (see Kifer \cite{Kifer1986}), and the \(\sigma\)-algebra generated by \(\pi^{-1}\) is \(F\)-invariant, then it follows that the fiber entropy satisfies the equality
\begin{equation}
h_\mu(F \mid \pi) = h_\mu\left(F^{-1} \mid \pi\right).
\end{equation}

In particular, we obtain the reverse Margulis-Ruelle inequality,
\[
h_\mu(F \mid \pi) \leq \sum_{\lambda_j^c(F,\mu)< 0} -\lambda_j^c(F,\mu) \operatorname{dim} E_j.
\]
\subsection{Main results}
\begin{ltheorem}\label{thm-main}
Let $M$ be a compact metric space and $N$ a closed smooth surface. Let $F_k$, $k\in \mathbb{N}$, be a skew product and let $\mu_k$ be an $F_k$-invariant and ergodic measure. Suppose that:
\begin{enumerate}[label = -]
    \item the limits $\lim_k \lambda_1^c\left(F_k, \mu_k\right)$ and $\lim _k h_{\mu_k}\left(F_k\mid \pi\right)$ exist and are positive,
    \item $F_k$ converges to a skew product $F$,
    \item ${\mu}_k \xrightarrow{w^*} {\mu}$ for some ${F}$-invariant measure ${\mu}$.
\end{enumerate}
Then, there exist $\beta \in(0,1]$ and two $F$-invariant measures $\mu_0$ and $\mu_1$ such that $\mu=(1-\beta)\mu_0+\beta \mu_1$, $h_{\mu_1} (F\mid \pi) > 0$ and,
$$
\lim _{k \rightarrow \infty} h_{\mu_k}\left(F_k\mid\pi\right)\leq \beta h_{\mu_1}\left(F\mid \pi\right).
$$
Moreover, if $\lambda_2(F,x,y)\leq0$ for $\mu$-a.e. $(x,y)$.
$$
\lim _{k \rightarrow \infty} \lambda_1^c(F_k,\mu_k)=\beta \lambda_1^c\left(F, \mu_1\right).
$$
\end{ltheorem}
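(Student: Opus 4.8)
The plan is to run, in the fibered setting, the scheme of Buzzi--Crovisier--Sarig \cite{BCS}, reading every quantity fiberwise: the surface diffeomorphism $f_k$ of \cite{BCS} is replaced by the family of fiber maps $\{g_{k,x}\}_{x\in M}$ acting on the surface $N$, and $h_{\mu_k}(f_k)$, $\lambda^+(f_k,\mu_k)$ by $h_{\mu_k}(F_k\mid\pi)$, $\lambda_1^c(F_k,\mu_k)$. By the Abramov--Rokhlin formula (Theorem~\ref{thm-ab-rok}) the fiber entropy equals $\lim_n\frac1n\int_M H_x(\mathcal{P}^n_x)\,d\nu_k$ for a fixed finite partition $\mathcal{P}$ of $N$, so all the entropy combinatorics takes place inside the fixed surface $N$. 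The essential departure from \cite{BCS} is that the base $f$ carries no differentiable structure (in Theorem~\ref{thm-main} it is merely a homeomorphism), so every piece of hyperbolic or reparametrization information must be produced inside the fibers and uniformly in $x\in M$; this is exactly what the hypotheses ``$F$ acts $C^\infty$ on the fibers'' and ``$g_{k,x}\to g_x$ in $C^\infty$ uniformly in $x$'' supply.

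\emph{A priori bounds and semicontinuity.} First I would record that, since $\mu_k$ is ergodic, $\lambda^c_j(F_k,x,y)$ is a.e.\ constant, so Theorem~\ref{T2} and its reverse (stated above) give $\lambda_1^c(F_k,\mu_k)\ge h_{\mu_k}(F_k\mid\pi)$ and $\lambda_2^c(F_k,\mu_k)\le-h_{\mu_k}(F_k\mid\pi)$; with $\lim_k h_{\mu_k}(F_k\mid\pi)=:h_0>0$ this makes the $\mu_k$ uniformly of saddle type with rates bounded away from $0$. Writing $\lambda_1^c(F,\mu)=\inf_n\frac1n\int\log\|D_yg_x^n\|\,d\mu$ (subadditivity plus $F$-invariance) and using $g_{k,x}^n\to g_x^n$ in $C^1$ uniformly in $x$ together with $\mu_k\xrightarrow{w^*}\mu$, one gets the upper semicontinuity $\limsup_k\lambda_1^c(F_k,\mu_k)\le\lambda_1^c(F,\mu)$; since $(x,y)\mapsto\log|\det D_yg_x|$ is continuous, $\lambda_1^c+\lambda_2^c$ is weak-$*$ continuous, so $\lambda_2^c$ is lower semicontinuous and $\lambda_1^c(F,\mu)>0>\lambda_2^c(F,\mu)$. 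Finally I would install the Yomdin--Gromov reparametrization estimates used in \cite{BCS}, applied to $C^\infty$ curves in the surface $N$ under the compositions $g_{k,x}^n$ — with the $o(n)$ in the count of reparametrizing pieces uniform in $k$ and $x$, by the uniform $C^\infty$ control — so that fiber entropy is carried at a fixed finite scale and $\mu\mapsto h_\mu(F\mid\pi)$ is upper semicontinuous along $F_k\to F$, $\mu_k\xrightarrow{w^*}\mu$. At this stage $\lim_k h_{\mu_k}(F_k\mid\pi)\le h_\mu(F\mid\pi)$; the improvement by the factor $\beta$ is the real point.

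\emph{The decomposition.} Next, following \cite{BCS}, I would fix $\chi_0\in(0,h_0)$ and, for small $\varepsilon>0$, consider compact fiber Pesin sets $\Lambda_k=\Lambda_k(\chi_0,\varepsilon)$ (the Pesin-block condition applied to $D_yg_{k,x}$ along fiber orbits); the uniform $C^2$ bounds make the associated distortion constant uniform in $k$, and since $\mu_k$ is ergodic of saddle type with rates $\ge h_0$ one gets $\mu_k(\Lambda_k)\ge\tau_0>0$ for all large $k$ with $\tau_0$ independent of $k$. Passing to a subsequence, set $\beta=\lim_k\mu_k(\Lambda_k)\in[\tau_0,1]$, let $\mu_1$ be a weak-$*$ limit of the (suitably invariantized, as in \cite{BCS}) normalized restrictions of $\mu_k$ to $\Lambda_k$, and $\mu_0$ the complementary limit, so that $\mu=(1-\beta)\mu_0+\beta\mu_1$ with $\mu_0,\mu_1$ $F$-invariant and $\mu_1$ of saddle type by the uniform temperedness of $\Lambda_k$. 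The decisive estimate is the \emph{complement estimate}: the part of $\mu_k$ outside $\Lambda_k$ carries asymptotically no fiber entropy,
\[
\lim_{k\to\infty}\ \mu_k(\Lambda_k^c)\cdot h_{\mu_k(\,\cdot\,\mid\Lambda_k^c)}(F_k\mid\pi)=0 ,
\]
proved as in \cite{BCS} through the reparametrization count: off a uniformly hyperbolic set, curves $g_{k,x}^n(D)$ do not expand at a definite exponential rate, so they are covered by $e^{o(n)}$ pieces with $o(n)$ uniform and generate no fiber entropy in the limit. Combining the complement estimate with the entropy estimates for conditioned measures of \cite{BCS} and the upper semicontinuity above should yield
\[
\lim_{k\to\infty}h_{\mu_k}(F_k\mid\pi)\ \le\ \beta\,h_{\mu_1}(F\mid\pi),
\]
and, since $\lim_k h_{\mu_k}(F_k\mid\pi)=h_0>0$, this forces $\beta>0$ and $h_{\mu_1}(F\mid\pi)>0$.

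\emph{The exponent equality, and the main obstacle.} Assume finally $\lambda_2^c(F,x,y)\le0$ for $\mu$-a.e.\ $(x,y)$; then the same holds $\mu_0$- and $\mu_1$-a.e., so on $\mu_1$ the weak fiber direction is non-expanding and the growth of $\|D_yg_x^n\|$ is governed by $\lambda_1^c(F,\mu_1)$. The analogue of the complement estimate for the top exponent — again from the reparametrization count along long orbit segments in $\Lambda_k^c$ — gives $\lambda_1^c(F,\mu_0)\le0$, and then, with the upper semicontinuity above and the affinity of $\mu\mapsto\lambda_1^c(F,\mu)$,
\[
\limsup_{k\to\infty}\lambda_1^c(F_k,\mu_k)\ \le\ (1-\beta)\lambda_1^c(F,\mu_0)+\beta\,\lambda_1^c(F,\mu_1)\ \le\ \beta\,\lambda_1^c(F,\mu_1).
\]
For the reverse inequality one uses the uniform hyperbolicity of $\Lambda_k$, on which the top fiber exponent is continuous and $\ge\chi_0$: the conditionals converge to $\mu_1$, $\mu_k(\Lambda_k)\to\beta$, and — once more via the complement estimate — the $\Lambda_k^c$-contribution to $\lambda_1^c(F_k,\mu_k)$ does not go negative, so $\liminf_k\lambda_1^c(F_k,\mu_k)\ge\beta\,\lambda_1^c(F,\mu_1)$. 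The step I expect to be hardest is precisely this last one — ruling out a collapse of the top fiber exponent along the non-hyperbolic part — together with the correct construction of the triple $(\beta,\mu_0,\mu_1)$ with $\mu_0,\mu_1$ genuinely $F$-invariant, all carried out uniformly in $x\in M$ with no smooth structure on the base. The supporting technical points to be rechecked rather than quoted from \cite{BCS} are the uniformity in $(k,x)$ of the $o(n)$ in the reparametrization of curves in $N$ under $g_{k,x}^n$, the compatibility of $\mu_k\xrightarrow{w^*}\mu$ with the Rokhlin disintegrations entering Theorem~\ref{thm-ab-rok}, and the uniform-in-$k$ control of the Pesin-block constants, which is where the $C^\infty$-uniform convergence $g_{k,x}\to g_x$ is used.
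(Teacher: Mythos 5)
Your strategy is broadly in the spirit of Buzzi--Crovisier--Sarig, but it diverges from the paper's actual route in a way that creates concrete gaps. The paper does \emph{not} build the triple $(\beta,\mu_0,\mu_1)$ from fiber Pesin blocks. Instead it passes to the projective fiber bundle $M\times\widehat{N}$, lifts each $\mu_k$ to its unstable lift $\widehat{\mu}_k^+$, and applies the neutral-block decomposition of \cite{BCS} (Proposition~\ref{P1}) to $(\widehat{F}_k,\widehat{\mu}_k^+,\widehat{\varphi}_k)$. This produces $\widehat{\mu}=\widehat{m}_0+\widehat{m}_1$ with the two properties that drive everything: $\int\widehat{\varphi}\,d\widehat{m}_0=0$, and the Birkhoff averages of $\widehat{\varphi}$ are $>0$ at $\widehat{m}_1$-a.e.\ point. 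The measures $\mu_0,\mu_1$ are the projections and $\beta=\widehat{m}_1(M\times\widehat{N})$; invariance is automatic, and no ``invariantization of a restriction'' is needed. Your Pesin-block version raises issues the paper sidesteps: $\beta=\lim_k\mu_k(\Lambda_k)$ depends a priori on $(\chi_0,\varepsilon)$; the restriction $\mu_k|_{\Lambda_k^c}$ is not $F_k$-invariant, so $h_{\mu_k(\cdot\mid\Lambda_k^c)}(F_k\mid\pi)$ is not a well-defined fiber entropy; and the claim that ``off a uniformly hyperbolic set, curves $g_{k,x}^n(D)$ do not expand at a definite exponential rate'' is false in general --- lying outside a $(\chi,\varepsilon)$-Pesin block does not constrain the expansion rate. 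The paper's reparametrization count is keyed to the neutral-block structure (which \emph{does} control $\sum\widehat{\varphi}$ along the relevant time intervals) and is fed through the Ledrappier--Young--Zang formula (Proposition~\ref{P3}), the two-scale covering estimate (Claim~\ref{twoscales}, Corollary~\ref{katok2}), Yomdin (Proposition~\ref{yomdin}), and Proposition~\ref{5.1}.

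The exponent equality is where the gap is most serious. You assert $\lambda_1^c(F,\mu_0)\le 0$, but the paper never proves (and does not need) this: it only has $\int\widehat{\varphi}\,d\widehat{m}_0=0$, which is a statement about the specific line field charged by $\widehat{m}_0$, not about the top fiber exponent of the projection $\mu_0$; $\lambda_1^c(F,\mu_0)$ could well be positive. The paper's route is instead: from Proposition~\ref{P1}(v) plus $\lambda_2\le 0$ $\mu_1$-a.e., the normalized $\widehat{\mu}_1$ must equal the unstable lift $\widehat{\mu}_1^+$, so $\beta\lambda_1^c(F,\mu_1)=\beta\int\widehat{\varphi}\,d\widehat{\mu}_1^+=\int\widehat{\varphi}\,d\widehat{m}_1=\int\widehat{\varphi}\,d\widehat{m}_0+\int\widehat{\varphi}\,d\widehat{m}_1=\int\widehat{\varphi}\,d\widehat{\mu}=\lim_k\lambda_1^c(F_k,\mu_k)$, giving equality in one shot; no separate $\liminf$ argument over $\Lambda_k$ is needed. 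Your ``affinity of $\lambda_1^c$'' step is fine (it is an integral of a measurable function), and your semi-continuity observations are correct but not what the proof turns on. If you want to salvage your strategy, the minimal fix is to carry the decomposition out on the projective bundle as the paper does --- without that, you lose both the canonical $\beta$ and the lift identity $\widehat{\mu}_1=\widehat{\mu}_1^+$ that makes the exponent equality an identity rather than a two-sided squeeze.
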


\begin{coro}
    Let $F_k$ be a skew product and let $\mu_k$ be an $F_k$-invariant and ergodic measure. Suppose that $F_k$ converges to a skew product $F$ and ${\mu}_k \to{\mu}$ in the weak* topology for some ${F}$-invariant and ergodic probability measure ${\mu}$.
    If
    $$
    h_{\mu_k}\left(F_k\mid\pi\right)\to h_{\mu}\left(F\mid \pi\right)>0
    $$
    then
    $$
     \lambda_1^c(F_k,\mu_k) \to  \lambda_1^c\left(F, \mu\right)\quad \text{and}\quad \lambda_2^c(F_k,\mu_k) \to  \lambda_2^c\left(F, \mu\right).
    $$

\end{coro}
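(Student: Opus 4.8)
The plan is to run a standard subsequence argument and reduce everything to one application of Theorem~\ref{thm-main}, using the two Margulis--Ruelle inequalities to supply the missing hypotheses. First I would record the following. Since $F_k\to F$ means $g_{k,x}\to g_x$ in $C^\infty$ uniformly in $x$, the quantities $\|D_y g_{k,x}^{\pm 1}\|$ are uniformly bounded in $k,x,y$, so the sequences $(\lambda_1^c(F_k,\mu_k))_k$ and $(\lambda_2^c(F_k,\mu_k))_k$ are bounded. Because $h_{\mu_k}(F_k\mid\pi)\to h_\mu(F\mid\pi)>0$, Theorem~\ref{T2} gives, for all large $k$, $h_{\mu_k}(F_k\mid\pi)\le 2\lambda_1^c(F_k,\mu_k)$ (the fibers are $2$-dimensional, so the positive part of the Oseledets sum involves total dimension at most $2$, each exponent bounded by $\lambda_1^c$), hence $\liminf_k\lambda_1^c(F_k,\mu_k)\ge \tfrac12 h_\mu(F\mid\pi)>0$. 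On the other hand, applying Theorem~\ref{T2} to $F^{-1}$ (the reverse Margulis--Ruelle inequality displayed above) together with $h_\mu(F\mid\pi)>0$ forces $\mu$ to have a strictly negative fiber exponent, i.e. $\lambda_2^c(F,\mu)<0$; in particular $\lambda_2^c(F,x,y)\le 0$ for $\mu$-a.e. $(x,y)$.

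Now fix an arbitrary subsequence of $\mathbb N$. By boundedness, pass to a further subsequence along which $\lambda_1^c(F_k,\mu_k)\to\ell_1$ and $\lambda_2^c(F_k,\mu_k)\to\ell_2$; by the previous paragraph $\ell_1>0$, and $h_{\mu_k}(F_k\mid\pi)\to h_\mu(F\mid\pi)>0$. Thus the hypotheses of Theorem~\ref{thm-main} hold along this subsequence, yielding $\beta\in(0,1]$ and $F$-invariant measures $\mu_0,\mu_1$ with $\mu=(1-\beta)\mu_0+\beta\mu_1$, $h_{\mu_1}(F\mid\pi)>0$, and $h_\mu(F\mid\pi)=\lim_k h_{\mu_k}(F_k\mid\pi)\le\beta h_{\mu_1}(F\mid\pi)$. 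Since $\beta>0$ and $\beta\mu_1\le\mu$, we have $\mu_1\ll\mu$, and as $\mu_1$ is $F$-invariant and $\mu$ ergodic this forces $\mu_1=\mu$. Then $h_\mu(F\mid\pi)\le\beta\, h_\mu(F\mid\pi)$ with $h_\mu(F\mid\pi)>0$ forces $\beta=1$. By the first paragraph $\lambda_2^c(F,x,y)\le 0$ $\mu$-a.e., so the ``moreover'' clause of Theorem~\ref{thm-main} applies and gives $\ell_1=\lim_k\lambda_1^c(F_k,\mu_k)=\beta\lambda_1^c(F,\mu_1)=\lambda_1^c(F,\mu)$.

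It remains to identify $\ell_2$, for which I would use that on a surface the sum of the fiber exponents equals the integrated log-Jacobian. For $2\times 2$ matrices $|\det A|=\|A\|\cdot\|A^{-1}\|^{-1}$, and $\log|\det D_y g_{k,x}^n|$ is a Birkhoff sum over $F_k$ of $(x,y)\mapsto\log|\det D_y g_{k,x}|$; combining this with the Furstenberg--Kesten limits ($\tfrac1n\log\|D_y g_{k,x}^n\|\to\lambda_1^c$ and $\tfrac1n\log\|(D_y g_{k,x}^n)^{-1}\|^{-1}\to\lambda_2^c$ in $L^1(\mu_k)$) and the ergodic theorem gives $\int\log|\det D_y g_{k,x}|\,d\mu_k=\lambda_1^c(F_k,\mu_k)+\lambda_2^c(F_k,\mu_k)$, and likewise $\int\log|\det D_y g_x|\,d\mu=\lambda_1^c(F,\mu)+\lambda_2^c(F,\mu)$. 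Since $(x,y)\mapsto\log|\det D_y g_{k,x}|$ converges uniformly to $(x,y)\mapsto\log|\det D_y g_x|$ (the maps converge in $C^1$ uniformly and the determinants are bounded away from $0$ and $\infty$ on the compact $M\times N$) and $\mu_k\xrightarrow{w^*}\mu$, we get $\lambda_1^c(F_k,\mu_k)+\lambda_2^c(F_k,\mu_k)\to\lambda_1^c(F,\mu)+\lambda_2^c(F,\mu)$, hence $\ell_1+\ell_2=\lambda_1^c(F,\mu)+\lambda_2^c(F,\mu)$ and so $\ell_2=\lambda_2^c(F,\mu)$. Therefore every subsequence of $\bigl(\lambda_1^c(F_k,\mu_k),\lambda_2^c(F_k,\mu_k)\bigr)_k$ has a further subsequence converging to $\bigl(\lambda_1^c(F,\mu),\lambda_2^c(F,\mu)\bigr)$, so the whole sequence converges to this limit, which is the assertion.

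Essentially all the substance sits in Theorem~\ref{thm-main}; the only genuinely non-formal points are (i) verifying its hypotheses, where the \emph{forward} Margulis--Ruelle inequality provides the positivity of $\lim_k\lambda_1^c(F_k,\mu_k)$; (ii) using ergodicity of $\mu$ to collapse the decomposition ($\mu_1=\mu$, $\beta=1$); and (iii) invoking the \emph{reverse} Margulis--Ruelle inequality to check the sign hypothesis of the ``moreover'' clause, which is what upgrades the automatic upper semicontinuity bound $\limsup_k\lambda_1^c(F_k,\mu_k)\le\lambda_1^c(F,\mu)$ to an equality. Step (iii) is the one most easily overlooked, and is the main obstacle to a clean write-up.
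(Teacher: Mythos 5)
The paper states this corollary without proof, so there is nothing to compare against directly; your argument is a correct derivation from Theorem~\ref{thm-main}. The three ingredients you isolate are exactly the right ones: the forward Margulis--Ruelle inequality (Theorem~\ref{T2}) gives $\liminf_k\lambda_1^c(F_k,\mu_k)\ge\tfrac12 h_\mu(F\mid\pi)>0$ along any subsequence; ergodicity of $\mu$ forces $\mu_1=\mu$ (any invariant probability absolutely continuous with respect to an ergodic measure equals it) and then $h_\mu(F\mid\pi)\le\beta\,h_\mu(F\mid\pi)$ with $h_\mu(F\mid\pi)>0$ forces $\beta=1$; and the reverse Margulis--Ruelle inequality gives $\lambda_2^c(F,\mu)<0$, so the sign hypothesis of the ``moreover'' clause holds and $\lim_k\lambda_1^c(F_k,\mu_k)=\lambda_1^c(F,\mu)$. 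Your determinant argument for $\lambda_2^c$ is also sound: on a $2$-dimensional fiber, $\lambda_1^c+\lambda_2^c=\int\log|\det D_yg_x|\,d\mu$ is an integral of a fixed continuous function against the invariant measure, and with $g_{k,x}\to g_x$ in $C^1$ uniformly and $|\det|$ bounded away from $0$ and $\infty$ on the compact product, the integrands converge uniformly, so weak-$*$ convergence $\mu_k\to\mu$ gives continuity of the sum, from which $\lambda_2^c(F_k,\mu_k)\to\lambda_2^c(F,\mu)$ follows. The subsequence reduction at the end is standard and closes the argument. The one place to be explicit when writing this up is that $\mu_0,\mu_1$ in Theorem~\ref{thm-main} are probability measures (this is clear from the proof of Proposition~\ref{expo}, where the $\widehat\mu_i$ are normalized), which is what makes the ergodicity collapse $\mu_1=\mu$ and the conclusion $\beta\ge 1$ go through.
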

\section{The projective fiber bundle}
\label{sec: PROJECTIVE FIBER BUNDLE}
Consider $\widehat{N}$ to be the  fiber bundle $(\widehat{N}, \widehat{\pi}, N)$ where $\widehat{\pi}_{\widehat{N}}: \widehat{N} \rightarrow N$ is the natural projection $\widehat{\pi}_{\widehat{N}}(x, E)=x$, and 
$$\widehat{N}:=\left\{(x, E): x \in N, E\right.\;\text{is a one-dimensional linear subspace of}\;T_xN\}$$

Since \(\dim N = 2\), the manifold \(\widehat{N}\) is a smooth, compact three-dimensional manifold. We endow it with the Riemannian metric given by \(\sqrt{ds^2 + d\theta^2}\), where \(ds\) is the length element on \(N\) and \(d\theta\) is the length element on $\widehat{N}_x$. We call
$M\times \widehat{N}$ the \textit{projective fiber bundle of} $M\times N$.

\begin{defn}[Lift]
Given a skew product $F$ on $M\times N$, the \textit{canonical lift of} $F$ to $M\times \widehat{N}$ is defined as 
$$
\widehat{F} (x,(y,E)) = \bigl(f(x),\widehat{g}_x(y,E)\bigr)
$$  
where 
$$
\widehat{g}_x(y,E) = \bigl(g_x(y), D_yg_x (E)\bigr).
$$
\end{defn}
Every $\widehat{F}$-invariant probability measure $\widehat{\mu}$ on $M\times \widehat{N}$ projects to an $F$-invariant probability measure $\mu$ on $M\times N$ given by
$$\mu:=\bigl(\operatorname{Id},\widehat{\pi}_{\widehat{N}}\bigr)_*(\widehat{\mu}):=\widehat{\mu} \circ \bigl(\operatorname{Id},\widehat{\pi}_{\widehat{N}}\bigr)^{-1}
$$ where \begin{gather*} \bigl(\operatorname{Id},\widehat{\pi}_{\widehat{N}}\bigr): M\times \widehat{N}\to {M\times N}\\
(x,(y,E))\mapsto (x,y).
\end{gather*}
We call $\mu$ the $\textit{projection}$ of $\widehat{\mu}$. Analogously, given $\mu$ an $F$-invariant measure, we say that $\widehat{\mu}$ is a $\textit{lift}$ of $\mu$ if $\widehat{\mu}$ is $\widehat{F}$-invariant and projects to $\mu$.




It is not difficult to see that if the fiber Lyapunov exponents of an ergodic measure $\mu$ are distinct, then there are precisely two ergodic \(\widehat{F}\)-invariant lifts of $\mu$,
\begin{equation}\label{E4}
\widehat{\mu}^{+}:=\int_{M\times N} \delta_{\left(x,(y , E_1(x,y))\right)} d \mu \quad \text { and } \quad\widehat{\mu}^{-}:=\int_{M\times N} \delta_{\left(x, (y,(E_2(x,y))\right)} d \mu \text {. }
\end{equation}

Moreover, if $\widehat{\varphi}(x,(y,E)) =\log \|D_yg_x|_E\|$, then 
\begin{equation}\label{E3}
\int \widehat{\varphi}\, d \widehat{\mu}^{+}=\lambda_1^c(F, \mu)\quad \text{and}\quad\int \widehat{\varphi}\, d \widehat{\mu}^{-}=\lambda_2^c(F, \mu).
\end{equation}

\section{The discontinuity ratio of the fiber exponents}
Following the definitions given in \cite{BCS}, we introduce the notions of empirical measures and neutral blocks.
\begin{defn}[Empirical measures]
Let $T$ be a homeomorphism on a compact metric space $X$. Given $\mathfrak{N} \subset \mathbb{N}$, let
$$
\mu_{x, n}^{\mathfrak{N}}:=\frac{1}{n} \sum_{j \in[0, n) \cap \mathfrak{N}} \delta_{T^j(x)}
$$

The weak-* limit points of $\left(\mu_{x, n}^{\mathfrak{N}}\right)_{n \geq 1}$ are called the $\mathfrak{N}$-empirical measures of $x$.  
\end{defn}

\begin{defn}[Neutral Blocks]
Let \(\psi: X \rightarrow \mathbb{R}\) be a continuous function and let \(\alpha > 0\) and \(L \geq 1\). An interval of integers \((n_0, n_0 + 1, \ldots, n_1 - 1)\) is called an \((\alpha, L)\)-neutral block of \((x, T, \psi)\) if it satisfies the following conditions:
\begin{enumerate}
    \item  $n_1-n_0 \geq L$, and
    \item $\psi\left(T^{n_0}(x)\right)+\psi\left(T^{n_0+1}(x)\right)+\cdots+\psi\left(T^{n-1}(x)\right) \leq \alpha \cdot\left(n-n_0\right)$ for all $n_0 < n \leq n_1$.
\end{enumerate}
\end{defn}
Denote \(\mathfrak{N}_{\alpha, L}(x, T, \psi)\) the collection of all \((\alpha, L)\)-neutral blocks of \((x, T, \psi)\).

In \cite{BCS}, the authors characterized the discontinuity ratio of the Lyapunov exponents as the proportion of time that typical orbits spend outside neutral blocks. More precisely, they proved that,
\begin{prop}[\textcolor{red}{Proposition 6.2 in} \cite{BCS}]\label{P1}
Let \(T, T_1, T_2, \ldots\) be homeomorphisms on a compact metric space \(X\), and let \(\psi, \psi_1, \psi_2, \ldots\) be continuous functions on \(X\) such that 
$$
T_k \to T \quad\text{and}\quad \psi_k \to \psi\quad \text{uniformly}.
$$
For each \(k\), let \(\mu_k\) be an ergodic probability measure for \(T_k\) with the property that \(\int \psi_k \, d\mu_k \geq 0\). Then, there exists a subsequence \((\mu_{k_i})\) and positive measures \(m_0\) and \(m_1\) such that:
\begin{enumerate}[label = (\roman*)]
    \item Both \(m_0\) and \(m_1\) are invariant under \(T\).
    \item The subsequence \((\mu_{k_i})\) converges weak-* to \(m_0 + m_1\).
    \item Let $V = (V_0, V_1)$ be neighborhoods of the measures $m_0$ and $m_1$, respectively. Then there exist constants $\alpha_*(V) > 0$ and $L_*(V) \in \mathbb{N}$ such that for every $\alpha \in (0, \alpha_*(V))$ and every $L \geq L_*(V)$, there exists $i_*(V, \alpha, L) \in \mathbb{N}$ such that the following holds:

For every $i \geq i_*(V, \alpha, L)$, we have that for $\mu_{k_i}$-almost every $x \in X$, the $\mathfrak{N}_{\alpha, L}(x, T_{k_i}, \psi_{k_i})$-empirical measures lie in $V_0$, and the $\mathbb{N} \setminus \mathfrak{N}_{\alpha, L}(x, T_{k_i}, \psi_{k_i})$-empirical measures lie in $V_1$.
    \item We have \(\int \psi \, dm_0 = 0\).
    \item For \(m_1\)-almost every point \(x\), the limit \(\lim_{n \to \infty} \frac{1}{n} \sum_{j=0}^{n-1} \psi(T^j(x))\) is positive.\label{item:P2}
\end{enumerate}
\end{prop}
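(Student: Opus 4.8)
Proposition~\ref{P1} is a restatement of \cite[Proposition~6.2]{BCS}; since both its hypotheses and its conclusion involve only homeomorphisms and continuous functions on a compact metric space---no skew-product structure enters---the argument of \cite{BCS} applies without any change, and I would simply quote it. For completeness, here is the shape of that argument. The plan is, for each $k$, to fix a point $x_k$ that is generic for $\mu_k$ under $T_k$, so that $\tfrac1n\sum_{j<n}\delta_{T_k^jx_k}\to\mu_k$ in the weak-$*$ topology and $\tfrac1n\sum_{j<n}\psi_k(T_k^jx_k)\to\int\psi_k\,d\mu_k\ge0$. Fixing sequences $\alpha_m\downarrow0$ and $L_m\uparrow\infty$, by weak-$*$ compactness of the set of sub-probability measures and a diagonal extraction over $m$ and over $k$ one passes to a subsequence $(k_i)$ and chooses integers $n_i\to\infty$ so that, writing $\mathfrak N_i=\mathfrak N_{\alpha_{m(i)},L_{m(i)}}(x_{k_i},T_{k_i},\psi_{k_i})$, the sub-probability measures $\mu^{\mathfrak N_i}_{x_{k_i},n_i}$ and $\mu^{\mathbb N\setminus\mathfrak N_i}_{x_{k_i},n_i}$ converge, say to $m_0$ and $m_1$. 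Since $T_{k_i}\to T$ uniformly and these are limits of Birkhoff-type averages along $T_{k_i}$-orbits (whose defect from $T_{k_i}$-invariance is controlled by the block boundaries and hence is $O(1/L_{m(i)})$), $m_0$ and $m_1$ are $T$-invariant, which is item (i); and $\mu^{\mathfrak N_i}_{x_{k_i},n_i}+\mu^{\mathbb N\setminus\mathfrak N_i}_{x_{k_i},n_i}=\mu_{x_{k_i},n_i}\to\mu$ gives $m_0+m_1=\mu=\lim_i\mu_{k_i}$, which is item (ii).

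The technical core, and the place where I expect the main difficulty to lie, is the combinatorics of neutral blocks, built on three elementary observations about running sums $S_n=S_n(x)=\psi(x)+\psi(Tx)+\cdots+\psi(T^{n-1}x)$ (and their analogues for $T_k,\psi_k$). (a)~On an $(\alpha,L)$-neutral block $[n_0,n_1)$ one has $S_n-S_{n_0}\le\alpha(n-n_0)$ for every $n\in(n_0,n_1]$, so each partial empirical measure of the block has integral of $\psi_k$ at most $\alpha$; passing to the limit this yields $\int\psi\,dm_0\le\lim_m\alpha_m=0$. (b)~A time lying in no neutral block admits a forward return $\tau$ with $\tau\le(\cdot)+L$ and $S_\tau-S_{(\cdot)}>\alpha(\tau-(\cdot))$, and iterating $\tau$ across a maximal gap between two neutral blocks shows that the running sum increases along the gap at linear rate $\alpha$, up to an additive error of size $O(L)$. (c)~Conversely, if from some point all forward sums obey $S_n\le\alpha n$---which, by the maximal ergodic theorem applied to $\psi-\alpha$, occurs on a set of positive measure whenever the forward Birkhoff average of $\psi$ is $\le0$---then arbitrarily long $(\alpha,L)$-neutral blocks start at that point. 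Item (iii) is the statement that this construction is robust: given neighborhoods $V_0,V_1$ of $m_0,m_1$, thresholds $\alpha_*,L_*,i_*$ can be chosen so that for every $i\ge i_*$, every admissible pair $(\alpha,L)$, and $\mu_{k_i}$-a.e.\ $x$, all $\mathfrak N_{\alpha,L}(x,T_{k_i},\psi_{k_i})$-empirical measures lie in $V_0$ and all complementary empirical measures lie in $V_1$; this follows by running the compactness argument against an arbitrary admissible sequence of scales and contradicting the negation of the conclusion.

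Finally, items (iv) and (v) are read off from (a)--(c). For (v): if an ergodic component of $m_1/|m_1|$ had non-positive $\psi$-integral, then by (c) a positive-measure set of its generic points would be starting points of arbitrarily long neutral blocks; but such a point lies in the support of $m_1$ and is therefore a limit of orbit-pieces lying in the complement of the neutral blocks, so by continuity of $T_{k_i}\to T$ and $\psi_{k_i}\to\psi$ those pieces would themselves be neutral for all large $i$, a contradiction. Hence every ergodic component of $m_1$ has positive $\psi$-average, and by Birkhoff's ergodic theorem $\lim_n\tfrac1n S_n(x)$ exists and is positive for $m_1$-a.e.\ $x$, which is (v). For (iv) one already has $\int\psi\,dm_0\le0$ by (a); the reverse inequality is the most delicate point and is where the hypothesis $\int\psi_k\,d\mu_k\ge0$ is genuinely used: combined with the linear growth of $S$ along gaps from (b) and a quantitative count of how blocks and gaps interleave along a generic orbit, it forces the neutral part of the orbit to carry asymptotically non-negative $\psi$-mass, whence $\int\psi\,dm_0=0$. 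The detailed bookkeeping for (b), (c) and, above all, the uniformity (iii) is the substance of the proof, and is carried out in full in \cite[Section~6]{BCS}, which I would cite rather than reproduce.
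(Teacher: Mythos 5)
The paper does not prove this proposition---it is quoted verbatim as Proposition 6.2 of \cite{BCS} and used as an external input---so your decision to cite that result rather than reproduce its proof is precisely what the authors do. Your accompanying sketch of the BCS argument is consistent with that source and is offered only for completeness, so the approaches coincide.
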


\begin{prop}\label{expo}
Let $F_k$ be a skew product and $\mu_k$ $F_{k}$-invariant and ergodic measures of saddle type. Suppose that
\begin{enumerate}[label = \emph{(\roman*)}]
\item $\lim_{k\to \infty} \lambda_1^c\left(F_k, \mu_k\right)>0$,
\item $F_k$ converges to a skew product $F$,
\item $\mu_k \xrightarrow{w^*} \mu$ for some $F$-invariant measure $\mu$.
\end{enumerate}
Then, there exists $\beta \in(0,1]$ and two $F$-invariant measures $\mu_0$ and $\mu_1$ such that $\mu=(1-\beta)\mu_0+\beta \mu_1$ and some lift $\widehat{\mu}_1$ of $\mu_1$ such that
$$\lim _{k \rightarrow \infty} \lambda_1^c\left(F_k, \mu_k\right)=\beta\, \int\widehat{\varphi} d\widehat{\mu}_1.$$
Moreover if $\lambda_2(F,x,y)\leq 0$ for $\mu_1$-a.e. $(x,y)$ then
\begin{gather*}
\lim _{k \rightarrow \infty} \lambda_1^c\left(F_k, \mu_k\right)=\beta\, \lambda_1^c(F, \mu_1).
\end{gather*}
\end{prop}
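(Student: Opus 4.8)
The plan is to transport the whole problem to the projective fiber bundle $M\times\widehat N$ and invoke Proposition~\ref{P1} there with the observable $\widehat\varphi$. Because each $\mu_k$ is ergodic of saddle type, $\lambda_1^c(F_k,\mu_k)>0>\lambda_2^c(F_k,\mu_k)$, so the fiber exponents are distinct and $\mu_k$ carries the ergodic $\widehat F_k$-invariant lift $\widehat\mu_k^+$ of \eqref{E4}. Setting $\widehat\varphi_k(x,(y,E)):=\log\|D_yg_{k,x}|_E\|$ (a continuous function, since the $g_{k,x}$ are diffeomorphisms), \eqref{E3} gives $\int\widehat\varphi_k\,d\widehat\mu_k^+=\lambda_1^c(F_k,\mu_k)>0$. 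I would first record the (routine) convergences: the $C^\infty$-convergence $g_{k,x}\to g_x$ uniform in $x$ already yields $Dg_{k,x}\to Dg_x$ uniformly, and hence $\widehat F_k\to\widehat F$ and $\widehat\varphi_k\to\widehat\varphi$ uniformly on the compact space $M\times\widehat N$. Then I would apply Proposition~\ref{P1} with $T_k=\widehat F_k$, $T=\widehat F$, $\psi_k=\widehat\varphi_k$, $\psi=\widehat\varphi$ and the ergodic measures $\widehat\mu_k^+$, the hypothesis $\int\psi_k\,d\mu_k\geq 0$ being satisfied. This gives a subsequence $\widehat\mu_{k_i}^+\xrightarrow{w^*}m_0+m_1$ with $m_0,m_1$ positive and $\widehat F$-invariant, $\int\widehat\varphi\,dm_0=0$, and $\lim_{n}\frac1n\sum_{j=0}^{n-1}\widehat\varphi(\widehat F^j(\cdot))>0$ at $m_1$-a.e.\ point.

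Since $(\operatorname{Id},\widehat{\pi}_{\widehat{N}})_*$ is weak-$*$ continuous and maps each $\widehat\mu_k^+$ to $\mu_k$, and $\mu_k\to\mu$, the limit $m_0+m_1$ is a probability measure projecting to $\mu$. Using $\widehat\varphi_{k_i}\to\widehat\varphi$ uniformly together with the weak-$*$ convergence,
$$
\lim_{k}\lambda_1^c(F_k,\mu_k)=\lim_{i}\int\widehat\varphi_{k_i}\,d\widehat\mu_{k_i}^+=\int\widehat\varphi\,d(m_0+m_1)=\int\widehat\varphi\,dm_1,
$$
where the leftmost equality holds because the full limit exists by hypothesis~(i). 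This number is positive, so $m_1\neq 0$; I then set $\beta:=m_1(M\times\widehat N)\in(0,1]$, $\widehat\mu_1:=\beta^{-1}m_1$, $\widehat\mu_0:=(1-\beta)^{-1}m_0$ (taking $\widehat\mu_0$ arbitrary when $\beta=1$), and let $\mu_0,\mu_1$ be the projections of $\widehat\mu_0,\widehat\mu_1$. Then $\mu=(1-\beta)\mu_0+\beta\mu_1$, $\widehat\mu_1$ is a lift of $\mu_1$, and $\lim_k\lambda_1^c(F_k,\mu_k)=\beta\int\widehat\varphi\,d\widehat\mu_1$, which is the first conclusion.

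For the ``moreover'' part I assume $\lambda_2^c(F,x,y)\leq0$ for $\mu_1$-a.e.\ $(x,y)$ and must show $\int\widehat\varphi\,d\widehat\mu_1=\lambda_1^c(F,\mu_1)$. From the telescoping identity $\sum_{j=0}^{n-1}\widehat\varphi(\widehat F^j(x,(y,E)))=\log\|D_yg_x^n|_E\|$ and Birkhoff's theorem applied to $(\widehat F,\widehat\mu_1)$,
$$
\int\widehat\varphi\,d\widehat\mu_1=\int\Bigl(\lim_{n\to\infty}\tfrac1n\log\|D_yg_x^n|_E\|\Bigr)\,d\widehat\mu_1(x,(y,E)),
$$
and part~\ref{item:P2} of Proposition~\ref{P1} shows this integrand is positive $\widehat\mu_1$-a.e. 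At a $\widehat\mu_1$-typical point $(x,(y,E))$ I would invoke the Oseledets theorem at the base point $(x,y)$: if the two fiber exponents there coincide, the integrand equals their common value, which is $\leq0$ — impossible; hence $\lambda_1^c(F,x,y)>\lambda_2^c(F,x,y)$ for $\mu_1$-a.e.\ $(x,y)$, and for such a point the integrand equals $\lambda_1^c(F,x,y)$ unless $E=E_2(x,y)$, in which case it equals $\lambda_2^c(F,x,y)\leq0$ — again impossible. So the integrand equals $\lambda_1^c(F,x,y)$ $\widehat\mu_1$-a.e., and as it depends only on the base point,
$$
\int\widehat\varphi\,d\widehat\mu_1=\int\lambda_1^c(F,x,y)\,d\mu_1=\lambda_1^c(F,\mu_1),
$$
giving $\lim_k\lambda_1^c(F_k,\mu_k)=\beta\,\lambda_1^c(F,\mu_1)$.

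The lifting, the projection bookkeeping, and the uniform convergences are routine. The real obstacle is the last paragraph: upgrading the general inequality $\int\widehat\varphi\,d\widehat\mu_1\leq\lambda_1^c(F,\mu_1)$ to an equality, which is precisely where the two-dimensionality of the fibers and the saddle assumption are used — the positivity of the Birkhoff average of $\widehat\varphi$ under $\widehat\mu_1$ (part~\ref{item:P2} of Proposition~\ref{P1}), combined with the surface Oseledets dichotomy, forces $\widehat\mu_1$ to be carried by the top-exponent line field, so that $\int\widehat\varphi\,d\widehat\mu_1$ recovers $\lambda_1^c(F,\mu_1)$ exactly rather than merely as an upper bound.
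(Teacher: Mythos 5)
Your argument is correct and follows essentially the same route as the paper: pass to the lifts $\widehat\mu_k^+$ on $M\times\widehat N$, apply Proposition~\ref{P1} to $\widehat F_k,\widehat F,\widehat\varphi_k,\widehat\varphi$, set $\beta=m_1(M\times\widehat N)$ and $\widehat\mu_1=\beta^{-1}m_1$, use $\int\widehat\varphi\,dm_0=0$ and positivity of the limit to get $\beta>0$ and $\lim_k\lambda_1^c(F_k,\mu_k)=\beta\int\widehat\varphi\,d\widehat\mu_1$, and then use Proposition~\ref{P1}\ref{item:P2} together with $\lambda_2^c\le0$ and the Oseledets dichotomy to force $\widehat\mu_1$ to sit on the top line field. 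The only cosmetic difference is in the final step: the paper identifies $\widehat\mu_1$ with the unstable lift $\widehat{\mu_1}^+$ by classifying lifts of $\mu_1$ (the only ergodic lifts live on $\operatorname{graph}(E_1)$ and $\operatorname{graph}(E_2)$, and the $E_2$-lift has nonpositive Birkhoff average), whereas you compute $\int\widehat\varphi\,d\widehat\mu_1$ directly via Birkhoff and argue pointwise that the integrand must be $\lambda_1^c$ a.e.; both are the same argument expressed in slightly different language.
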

\begin{proof}
Recall that $\widehat{\mu}^+_k$ denotes the lift of $\mu_k$, as defined in Equation (\ref{E4}). Since $M\times\widehat{N}$ is compact, up to taking a subsequence, we can assume that $\widehat{\mu}_k^{+} \xrightarrow{w*} \widehat{\mu}$ for some measure $\widehat{\mu}$. 
Apply Proposition \ref{P1} to $\widehat{F}_k, \widehat{F}, \widehat{\mu}_k^+$ and $\widehat{\varphi}_k, \widehat{\varphi}: M\times\widehat{N}\to \mathbb{R}$ defined by
\begin{gather*}
\widehat{\varphi}_k(x,(y,E)) =\log \|D_yg_{k,x}|_E\|\quad \\ \text{and} \\
\widehat{\varphi}(x,(y,E)) =\log \|D_yg_x|_E\|
\end{gather*}
where $x\mapsto g_{k,x}$ is the action on the fiber related to the skew product $F_k$. By Equation (\ref{E3}), we know that
$$
\int \widehat{\varphi}_k \,d \widehat{\mu}_k^{+}=\lambda_1^c\left(F_k, \mu_k\right)>0.
$$

Let $\widehat{m}_0$ and $\widehat{m}_1$ be the measures given by Proposition \ref{P1}. Then we have $\widehat{\mu} = \widehat{m}_0 + \widehat{m}_1$. Define $\beta=1-\widehat{m}_0(\widehat{M})=\widehat{m}_1(\widehat{M})$ and
$\widehat{\mu}_i:=\frac{1}{\widehat{m}_i(\widehat{M})} \widehat{m}_i$, or any invariant probability measure if $\widehat{m}_i(\widehat{M})=0$.

By hypothesis, $\beta\neq 0$. Otherwise
$$
\begin{aligned}
0=\int \widehat{\varphi} \,d \widehat{m}_0 & =\int \widehat{\varphi} \,d \widehat{\mu} \\
& =\lim _{k \rightarrow \infty} \lambda_1^c\left(F_k, \mu_k\right)>0.
\end{aligned}
$$

Define $\mu_i=\left(\operatorname{Id}, \pi_{\widehat{N}}\right)_*\widehat{\mu}_i$. Since $\widehat{\mu} =  (1 - \beta) \widehat{\mu}_0 + \beta \widehat{\mu}_1$ and $\left(\operatorname{Id}, \pi_{\widehat{N}}\right)_*\widehat{\mu} = \mu$, it follows that
$\mu = (1 - \beta) \mu_0 + \beta \mu_1 .$

Therefore we conclude the first part. Now let us assume that $\lambda_2(F,x,y)\leq 0$ $\mu_1$-a.e. $(x,y)$.
We claim that $\widehat{\mu}_1=\widehat{\mu_1}^{+}$ (the unstable lift of $\mu_1$). By Proposition \ref{P1} \ref{item:P2} and the definition of $\widehat{\varphi}$, we conclude that $\lambda^c_1(F,x,y)>0$ for $\mu_1$-almost every point. 


By Oseledets theorem, for $\mu_1$-almost every $(x,y)\in M\times N$,
$$
\{x\}\times T_y N=E_1(x,y)\oplus E_2(x,y)
$$
where $D g_x E_j(x,y)=E_j(f(x),g_x(y))(j=1,2)$.
The measure $\mu_1$ has a unique lift $\widehat{\mu_1}^{+}$ to $\operatorname{graph}\left({E}_1\right)$. Since
$$
\lim _{n \rightarrow \infty} \frac{1}{n} \sum_{j=0}^{n-1} \widehat{\varphi}(\widehat{F}^j(\widehat{x}))\leq 0
$$
on $\operatorname{graph}\left(E_2\right)$ and any other lift of $\mu_1$ charge some part to $E_2$ then, by Proposition \ref{P1} \ref{item:P2}, we have that $\widehat{\mu}_1=\widehat{\mu}^{+}$.

Therefore
$$
\begin{aligned}
\beta \,\lambda^{c}_1\left(F, \mu_1\right) & =\beta \int \widehat{\varphi} \,d \widehat{\mu_1}^{+}  \\
& =\beta \int \widehat{\varphi} \,d \widehat{\mu}_1=\int \widehat{\varphi} \,d \widehat{m}_1 \\
& =\int \widehat{\varphi}\, d \widehat{m}_1+\int \widehat{\varphi}\, d \widehat{m}_0=\int \widehat{\varphi} \,d \widehat{\mu} \\
& =\lim_{k \rightarrow \infty} \lambda_1^c\left(F_k, \mu_k\right).
\end{aligned}
$$

Thus 
$$
\lim _{k \rightarrow \infty} \lambda_1^c\left(F_k, \mu_k\right)=\beta\, \lambda_1^c\left(F, \mu_1\right)
$$
as required.
\end{proof}

\begin{rem}
    In contrast with the main Theorem of \cite{BCS} here in order to conclude the last part of the theorem, we need the hypothesis of $\lambda_2\leq0$, $\mu_1$-a.e. This was a consequence in their result. The main difference is that in the surface diffeomorphism case $\lambda_2>0$ implies the existence of periodic repellers and this is not the case for skew products.
\end{rem}

\section{Entropy formulas}
\label{sec: ENTROPY FORMULAS}

In this section, we present two different formulas for the fiber entropy of a skew product. The main result in Section 5.1 holds for ergodic saddle type measures and it is going to be applied to the sequence of measures $\mu_k$ in Theorem \ref{thm-main}. The formula in Section 5.2 is valid for any invariant measure and it will also be used in the proof of Theorem \ref{thm-main} but now for the limit measure $\mu$.

\subsection{Ledrappier-Young type formulas} 
Let $F$ be a skew product and let $\mu$ be an $F$-invariant ergodic measure of saddle type. Let \(\mathcal{R}_{\mu}\) denote the set of points where the Lyapunov exponents of $\mu$ are well-defined.
\begin{defn}[Fiber-wise unstable manifolds]
The set 
 $$\widetilde{W}^u(x, y):=\left\{z \in N: \limsup_{n \rightarrow+\infty}\frac{1}{n} \log \left.d\left(g_x^{-n} (y), g_x^{-n} (z)\right) \leq\right.-\lambda_1^c(F,\mu)\right\}$$ is called the \textit{fiber-wise unstable manifold} of $F$ at $(x, y)$, where $(x, y)\in \mathcal{R}_{\mu}$.
\end{defn}

Each \( \widetilde{W}^u(x, y) \) is a \( C^{\infty} \) immersed submanifold of \( N \) and $\{x\}\times \widetilde{W}^u(x, y)$ is tangent at \( (x, y) \) to $E_1(x, y)$. 

In this section, we will define quantities referred to as local entropy along fiber-wise unstable manifolds. In the ergodic case, the concept of local entropy along the $\widetilde{W}^u$-manifolds is represented by a number $h^u$, which measures the degree of randomness along the leaves of the $\widetilde{W}^u$-manifolds. This notion of entropy originates from Ledrappier and Young \cite{LY2}. Here, we present a version for skew products as described in \cite{LiuXie2006}.
\begin{defn}[Subordinated partitions]
 A measurable partition $\eta$ of $M\times N$ is said to be subordinate to the $\widetilde{W}^u$-manifolds if for $\mu$-a.e. $(x, y)$, $\eta_x(y):=\{z\in N :(x, z) \in \eta(x, y)\} \subset \widetilde{W}^u(x, y)$ and $\eta_x(y)$ contains an open neighborhood of $y$ in $\widetilde{W}^u(x, y)$.   
\end{defn}

Let \(\eta\) be a partition of \(M\times N\) that is subordinate to the \(\widetilde{W}^u\)-manifolds.  For each \(x \in M\), we define \(\eta_x\) as the partition \(\{\eta_x(y) : y \in N\}\) of \(N\). Recall that $\{\mu_x\}_{x\in M}$ is the disintegration of $\mu$ along the fibers $\{x\}\times N$.  For each $x\in M$, consider 
$$\{(\mu_x)_y^{\eta_x}\}_{y \in N},$$ 
as the canonical family of conditional measures of \(\mu_x\) associated with \(\eta_x\). 

For \(\varepsilon > 0\), \((x, y) \in M\times N\) and \(n \in \mathbb{Z}^{+}\), define  
\[
B^F_x(y ; n, \varepsilon) := \left\{ z \in N : d\left(g_x^j(y), g_x^j(z) \right) < \varepsilon \text{ for all } 0 \leq j < n \right\}.
\]
In a manner similar to the approach taken by Ledrappier and Young \cite{LY2}, the following quantities are introduced:
$$
\begin{aligned}
& \underline{h}^u(x, y ; \varepsilon, \eta):=\liminf _{n \rightarrow+\infty}-\frac{1}{n} \log (\mu_x)_y^{\eta_x}\left(B^F_x(y ; n, \varepsilon)\right), \\
& \overline{h}^u(x, y ; \varepsilon, \eta):=\limsup _{n \rightarrow+\infty}-\frac{1}{n} \log (\mu_x)_y^{\eta_x}\left(B^F_x(y ; n, \varepsilon)\right).
\end{aligned}
$$
These functions are clearly measurable. Additionally, it is important to observe that the functions increase as \(\varepsilon \to 0\). 

Finally, the concepts of lower and upper local entropy along the \(\widetilde{W}^u\)-manifolds at \((x, y)\), with respect to \(\eta\), can be defined. We remark that in \cite{LiuXie2006}, the authors use the intrinsic metric on the unstable sub-manifold to define these quantities. We remark that although we use the global metric, both definitions coincide because $\varepsilon\to 0$.
$$
\begin{aligned}
& \underline{h}^u(x, y ; \eta):=\lim_{\varepsilon \rightarrow 0} \underline{h}^u(x, y ; \varepsilon, \eta) = \lim_{\varepsilon \rightarrow 0} \liminf _{n \rightarrow+\infty}-\frac{1}{n} \log (\mu_x)_y^{\eta_x}\left(B^F_x(y ; n, \varepsilon)\right), \\
& \overline{h}^u(x, y ; \eta):=\lim_{\varepsilon \rightarrow 0} \overline{h}^u(x, y ; \varepsilon, \eta)=\lim_{\varepsilon \rightarrow 0}\limsup _{n \rightarrow+\infty}-\frac{1}{n} \log (\mu_x)_y^{\eta_x}\left(B^F_x(y ; n, \varepsilon)\right).
\end{aligned}
$$

We denote by \(\sigma\) the partition \(\pi^{-1}(\mathcal{E})\), where \(\mathcal{E}\) represents the partition of \(M\) into points. Let $\eta$ be a partition subordinate to the \(\widetilde{W}^u\)-manifolds such that $\eta >\sigma$, \(F^{-1} \eta > \eta\), $F^{-n}\eta$ tends to the partition into single points and $h_\mu(F\mid \pi)=H_{\mu}\left(\eta \mid F(\eta) \vee \sigma\right)$. Such a partition always exists (see Proposition 3.7 in \cite{BahnmullerLiu1998}).

\begin{prop}[Proposition 8.6, \cite{LiuXie2006}]\label{P2}
Let \(\eta\) be a partition satisfying the properties above, then for $\mu$-a.e. $(x, y)$, 
$$
\underline{h}^u\left(x, y ; \eta\right)=\overline{h}^u\left(x, y ; \eta\right)=H_{\mu}\left(\eta \mid F(\eta) \vee \sigma\right).
$$
\end{prop}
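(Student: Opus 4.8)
The plan is to deduce the statement from a Shannon--McMillan--Breiman estimate along the atoms of the refined partitions $F^{-n}\eta$, and then to squeeze the Bowen balls $B^F_x(y;n,\varepsilon)$ between two such atoms, using the one-dimensional geometry of the fiber-wise unstable manifolds; this is the skew-product transcription of the Ledrappier--Young argument \cite{LY2}, as carried out in \cite{LiuXie2006}. A few soft reductions come first. Since $\eta>\sigma$, the transitivity of conditional measures (disintegrate $\mu$ first over $\sigma$, which produces the $\mu_x$, and then over $\eta$) identifies, for $\mu$-a.e.\ $(x,y)$ and under $\{x\}\times N\cong N$, the measure $(\mu_x)_y^{\eta_x}$ with the conditional measure $\mu^\eta_{(x,y)}$ of $\mu$ along $\eta$. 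Moreover $\sigma=\pi^{-1}(\mathcal{E})$ is $F$-invariant and coarser than $\eta$, so every atom of $F(\eta)$ lies in a single fiber and $F(\eta)\vee\sigma=F(\eta)$; write $h:=H_\mu(\eta\mid F(\eta)\vee\sigma)=H_\mu(\eta\mid F(\eta))$, which is finite by Theorem~\ref{T2}. With these identifications it suffices to show that for $\mu$-a.e.\ $(x,y)$ and every sufficiently small $\varepsilon>0$, $\lim_{n\to\infty}-\tfrac1n\log\mu^\eta_{(x,y)}\big(B^F_x(y;n,\varepsilon)\big)=h$.

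Next, the atom estimate. Because $F^{-1}\eta>\eta$, the partitions $F^{-n}\eta$ ($n\ge0$) increase to the partition into points, and the chain rule for the information function together with the $F$-invariance of $\mu$ gives $I_\mu\big(F^{-n}\eta\mid\eta\big)=\sum_{j=0}^{n-1}\phi\circ F^{j}$, where $\phi:=I_\mu\big(F^{-1}\eta\mid\eta\big)\in L^1(\mu)$ (integrable since $\int\phi\,d\mu=H_\mu(\eta\mid F(\eta))=h<\infty$). By Birkhoff's ergodic theorem and the ergodicity of $\mu$,
\[
-\tfrac1n\log\mu^\eta_{(x,y)}\big((F^{-n}\eta)(x,y)\big)=\tfrac1n I_\mu\big(F^{-n}\eta\mid\eta\big)(x,y)\ \longrightarrow\ h\qquad\text{for }\mu\text{-a.e.\ }(x,y),
\]
and therefore $-\frac1n\log\mu^\eta_{(x,y)}\big((F^{-m(n)}\eta)(x,y)\big)\to h$ for every measurable $m(n)$ with $m(n)\to\infty$ and $m(n)/n\to1$.

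Now the geometric squeezing. Since $\mu$ is ergodic of saddle type, $E_1$ is one-dimensional, $\widetilde{W}^u(x,y)$ is a $C^\infty$ curve through $y$ along which $g_x$ expands at rate $e^{n\lambda_1^c(F,\mu)}$ with sub-exponentially fluctuating distortion (the fiber maps being $C^\infty$); by the construction of $\eta$ we may take the atom $\eta(x,y)$ to be a connected local unstable leaf containing a ball of positive radius $r(x,y)$, with $\frac1n\log r(F^n(x,y))\to0$ a.e.\ by the temperedness of such size functions (Pesin recurrence). Comparing arc-lengths on this one-dimensional leaf, for $n$ large the set $B^F_x(y;n,\varepsilon)\cap\eta(x,y)$ is an interval around $y$ of radius comparable, up to sub-exponential factors, to $\varepsilon\,e^{-n\lambda_1^c(F,\mu)}$, while the component of $(F^{-m}\eta)(x,y)$ containing $y$ is an interval of radius comparable to $r(F^m(x,y))\,e^{-m\lambda_1^c(F,\mu)}$. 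Hence one can pick $m_1(n,\varepsilon)$ and $m_2(n,\varepsilon)$, both of the form $n+o(n)$ as $n\to\infty$ (the $o(n)$ absorbing $\log(1/\varepsilon)$ and the sub-exponential errors), so that for a.e.\ $(x,y)$ and all large $n$
\[
(F^{-m_1}\eta)(x,y)\ \subseteq\ B^F_x(y;n,\varepsilon),\qquad B^F_x(y;n,\varepsilon)\cap\eta(x,y)\ \subseteq\ (F^{-m_2}\eta)(x,y).
\]
Since $\mu^\eta_{(x,y)}$ is carried by $\eta(x,y)$, monotonicity gives $\mu^\eta_{(x,y)}\big((F^{-m_1}\eta)(x,y)\big)\le\mu^\eta_{(x,y)}\big(B^F_x(y;n,\varepsilon)\big)\le\mu^\eta_{(x,y)}\big((F^{-m_2}\eta)(x,y)\big)$, and applying $-\frac1n\log$ and the atom estimate (both outer terms tend to $h$ because $m_i/n\to1$) proves the displayed limit for each fixed small $\varepsilon$; this gives $\underline h^u(x,y;\eta)=\overline h^u(x,y;\eta)=h$.

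The main obstacle is the last step: everything else is soft (transitivity of conditional measures, the chain rule for the information function, Birkhoff's theorem), but the squeezing requires carrying out the non-uniform Pesin/Lyapunov-chart estimates for the $C^\infty$ fiber dynamics carefully enough to control the merely measurable distortion and size functions along orbits, and to upgrade the comparison of radii on the one-dimensional unstable leaf to honest set inclusions between the Bowen ball and the partition atoms. This is exactly the technical heart imported from \cite{LY2} and adapted to skew products in \cite{LiuXie2006}, and I would present only the estimates that genuinely change in the skew-product setting, referring to that adaptation for the rest.
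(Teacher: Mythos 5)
The paper does not supply a proof of this proposition but cites it directly from \cite{LiuXie2006} (their Proposition 8.6); your sketch correctly reconstructs the Ledrappier--Young argument as adapted to skew products there, and the soft reductions you carry out (using $F(\sigma)=\sigma$ and $\eta>\sigma$ to collapse $F(\eta)\vee\sigma$ to $F(\eta)$, then the chain rule for the information function together with Birkhoff to get the atom estimate along $F^{-n}\eta$) are accurate. The geometric squeezing of Bowen balls between atoms of $F^{-m_1}\eta$ and $F^{-m_2}\eta$ with $m_i/n\to 1$ is indeed the technical core, and you correctly identify it and appropriately defer the non-uniform Pesin-chart distortion estimates to the cited adaptation rather than claiming to have verified them.
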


Using this result, we conclude that the fiber entropy of a skew product $F$ can be expressed for $\mu$-a.e. $(x,y)$ as the local entropy along the $\widetilde{W}^{u}$-manifolds, that is,
\begin{equation}
    h_\mu(F\mid \pi)=\lim_{\varepsilon \rightarrow 0}\liminf _{n \rightarrow+\infty}-\frac{1}{n} \log (\mu_x)_y^{\eta_x}\left(B^{F}_x( y ; n, \varepsilon)\right).
\end{equation}

Observe that in particular the proposition implies that this limit does not depend on the partition $\eta$.

\begin{defn}
We define the following quantity:
$$r_F(n, \varepsilon,(\mu_x)_{y}^{\eta_x}, \gamma):=\inf\left\{r^F_x(n, \varepsilon, Z):\; Z \subset N \;\text{and}\;(\mu_x)_{y}^{\eta_x}(Z) >\gamma\right\},
$$

where 
$$
r^F_x(n, \varepsilon, Z) = \inf \left\{\#A: \;Z \subset \bigcup_{y\in A} B^F_x(y ; n, \varepsilon)\right\}.
$$
\end{defn}
\begin{prop}\label{P3}
Let $F$ be a skew product and suppose that the measure $\mu$ is an $F$-invariant ergodic measure of saddle type. Then, for $\mu$-almost every point $(x, y) \in M \times N$, we have
\begin{equation}
h_\mu(F\mid \pi)=\inf_{\gamma>0}\lim_{\varepsilon \rightarrow 0}\liminf _{n \rightarrow+\infty}\frac{1}{n} \log r_F\left(n, \varepsilon, (\mu_x)_y^{\eta_x}, \gamma\right).
\end{equation}
\end{prop}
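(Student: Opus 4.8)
The plan is to read the right-hand side as Katok's entropy formula transplanted to the fiber-unstable manifolds, using Proposition~\ref{P2} in the role of the Brin--Katok identity: it gives $h:=h_\mu(F\mid\pi)=\underline h^u(x,y;\eta)=\overline h^u(x,y;\eta)$ for $\mu$-a.e.\ $(x,y)$. Two elementary consequences will be used throughout. Because the dynamical balls $B^F_x(z;n,\varepsilon)$ shrink as $\varepsilon\downarrow0$, the quantities $\underline h^u(x,z;\varepsilon,\eta)$ and $\overline h^u(x,z;\varepsilon,\eta)$ are non-decreasing as $\varepsilon\downarrow0$ and, for a.e.\ point, converge to $h$; hence $\overline h^u(x,z;\varepsilon,\eta)\le h$ for \emph{every} $\varepsilon>0$, while $\underline h^u(x,z;\varepsilon,\eta)\ge h-\delta$ once $\varepsilon$ is small (depending on the point and on $\delta$). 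I also use that $(\mu_x)_z^{\eta_x}$ is a probability measure depending only on the atom $\eta_x(y)\ni z$, that $\eta_x(y)\subset\widetilde{W}^u(x,y)$, and hence that the sets $Z$ and the dynamical balls occurring in $r_F(n,\varepsilon,(\mu_x)_y^{\eta_x},\gamma)$ may be taken inside a single atom, on which $(\mu_x)_z^{\eta_x}=(\mu_x)_y^{\eta_x}$. Fix $\delta>0$; I establish the two inequalities and then let $\delta\to0$.

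\emph{Uniformization.} Fix $\gamma\in(0,\tfrac12)$. For $\mu$-a.e.\ $(x,y)$ the convergences above hold for $(\mu_x)_y^{\eta_x}$-a.e.\ $z\in\eta_x(y)$, so, working inside the probability space $\big(\eta_x(y),(\mu_x)_y^{\eta_x}\big)$, two successive applications of Egorov's theorem yield, for each small $\varepsilon>0$, an integer $n_0$ and a set $G\subset\eta_x(y)$ with $(\mu_x)_y^{\eta_x}(G)>1-\tfrac{\gamma}{2}$ such that, for all $z\in G$ and all $n\ge n_0$,
$$
(\mu_x)_y^{\eta_x}\big(B^F_x(z;n,2\varepsilon)\big)\le e^{-n(h-\delta)}\qquad\text{and}\qquad(\mu_x)_y^{\eta_x}\big(B^F_x(z;n,\varepsilon/2)\big)\ge e^{-n(h+\delta)}.
$$
(The first Egorov handles the limit $\varepsilon\downarrow0$; the second, at a fixed $\varepsilon$, handles $n\to\infty$.)

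\emph{Lower bound.} Let $Z\subset\eta_x(y)$ with $(\mu_x)_y^{\eta_x}(Z)>\gamma$, let $n\ge n_0$, and suppose $Z\subset\bigcup_{i=1}^R B^F_x(w_i;n,\varepsilon)$. Since $(\mu_x)_y^{\eta_x}(Z\cap G)\ge\gamma-\tfrac{\gamma}{2}=\tfrac{\gamma}{2}$, for every $i$ with $B^F_x(w_i;n,\varepsilon)\cap Z\cap G\ne\emptyset$ pick $z_i$ in that intersection; the triangle inequality for dynamical balls gives $B^F_x(w_i;n,\varepsilon)\subset B^F_x(z_i;n,2\varepsilon)$, hence $(\mu_x)_y^{\eta_x}\big(B^F_x(w_i;n,\varepsilon)\big)\le e^{-n(h-\delta)}$, and summing over those $i$ yields $\tfrac{\gamma}{2}\le(\mu_x)_y^{\eta_x}(Z\cap G)\le R\,e^{-n(h-\delta)}$. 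Thus $r^F_x(n,\varepsilon,Z)\ge\tfrac{\gamma}{2}e^{n(h-\delta)}$ for every admissible $Z$, so $r_F(n,\varepsilon,(\mu_x)_y^{\eta_x},\gamma)\ge\tfrac{\gamma}{2}e^{n(h-\delta)}$ and $\liminf_n\tfrac1n\log r_F\ge h-\delta$; letting $\varepsilon\to0$ gives $\lim_{\varepsilon\to0}\liminf_n\tfrac1n\log r_F(n,\varepsilon,(\mu_x)_y^{\eta_x},\gamma)\ge h-\delta$. \emph{Upper bound.} For $n\ge n_0$ select greedily points $z_1,z_2,\dots\in G$ with $z_{j+1}\notin\bigcup_{i\le j}B^F_x(z_i;n,\varepsilon)$ until the process stops at some $z_R$; it terminates because the $z_i$ form an $(n,\varepsilon)$-separated set in the compact manifold $N$. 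Then $\{B^F_x(z_i;n,\varepsilon)\}_{i\le R}$ covers $G$, a set of conditional measure $>1-\tfrac{\gamma}{2}>\gamma$, while the balls $B^F_x(z_i;n,\varepsilon/2)$ are pairwise disjoint, each of conditional measure $\ge e^{-n(h+\delta)}$; hence $R\,e^{-n(h+\delta)}\le1$, so $r_F(n,\varepsilon,(\mu_x)_y^{\eta_x},\gamma)\le R\le e^{n(h+\delta)}$ and $\lim_{\varepsilon\to0}\liminf_n\tfrac1n\log r_F(n,\varepsilon,(\mu_x)_y^{\eta_x},\gamma)\le h+\delta$.

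Combining the two bounds and letting $\delta\to0$, for $\mu$-a.e.\ $(x,y)$ and every $\gamma\in(0,\tfrac12)$ one gets $\lim_{\varepsilon\to0}\liminf_n\tfrac1n\log r_F(n,\varepsilon,(\mu_x)_y^{\eta_x},\gamma)=h$, so the infimum over $\gamma>0$ also equals $h$, as claimed. I expect the only delicate point to be the uniformization step: the local entropies of Proposition~\ref{P2} are iterated limits (first $n\to\infty$, then $\varepsilon\downarrow0$), so one must fix a countable family of rational parameters $(\varepsilon,\delta,\gamma)$, intersect the corresponding full-measure sets, and apply Egorov twice within the atom $\eta_x(y)$ so that all the estimates hold simultaneously on a single $\mu$-full set; once this bookkeeping is organized, the remainder is exactly the classical Katok covering/separation argument carried out on the atoms with the conditional measures $(\mu_x)_y^{\eta_x}$.
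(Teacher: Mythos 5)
Your argument is correct and follows essentially the same route as the paper's proof: Proposition~\ref{P2} is used as the Brin--Katok input, and the lower bound is obtained by the same covering/measure comparison (your Egorov-based set $G$ plays exactly the role of the paper's nested sets $\Lambda_\tau^\varepsilon(p,j)$, the triangle inequality $B^F_{x}(\bar y;n,\varepsilon)\subset B^F_{x}(\tilde y;n,2\varepsilon)$ being the common key step). The paper leaves the reverse inequality as ``analogous via the $\limsup$ identity''; your greedy $(n,\varepsilon)$-separated packing argument is precisely that omitted direction, carried out explicitly, so this is a faithful filling-in rather than a different proof.
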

The proof is an adaptation of the arguments in Y. Zang \cite{Zang}. 
\begin{proof}
For brevity, we denote the fiber entropy $h_{\mu}(F \mid \pi)$ simply by $h_F$ throughout this proof. 

Let $\Gamma_\mu$ denote the full measure set of points satisfying Proposition \ref{P2}. Then, we can assume that, for every $(x,y) \in \Gamma_\mu$, the local unstable entropy satisfies:
\begin{align*}
&\qquad\lim _{\varepsilon \rightarrow 0} \liminf_{n \rightarrow+\infty}-\frac{1}{n} \log (\mu_x)_y^{\eta_x}\left(B^F_x(y;n,\varepsilon)\right)=h_{F}.
\end{align*}
Let us fix an arbitrary $\tau > 0$. We define a family of sets $\Lambda_\tau^{\varepsilon}$ for each $\varepsilon > 0$ as follows:
$$
\Lambda_\tau^{\varepsilon} :=\left\{(x,y) \in \Gamma_\mu: \liminf _{n \rightarrow+\infty}-\frac{1}{n} \log (\mu_x)_y^{\eta_x}\left(B^F_x(y; n, 2 \varepsilon)\right)>h_F-\tau\right\}.
$$
It follows that the union of these sets covers $\Gamma_\mu$, that is, $\cup_{\varepsilon>0} \Lambda_\tau^{\varepsilon}\supset\Gamma_\mu$.

Now, fix $\varepsilon>0$ and a point $p=(x_p, y_p) \in \Lambda_\tau^{\varepsilon}$. Define $$\Lambda_{\tau}^{\varepsilon}(x_p):=\left\{y\in N : (x_p,y) \in \Lambda_\tau^{\varepsilon}\right\}.$$ For any $j \in \mathbb{N}$, we further refine this set by defining:
$$
\Lambda_{\tau}^{\varepsilon}(p,j)=\left\{y\in \Lambda_\tau^{\varepsilon}(x_p): (\mu_{x_p})_{y_p}^{\eta_{x_p}}\left(B^F_{x_p}(y; n, 2 \varepsilon)\right) \leq e^{-n\left(h_F-\tau\right)}, \forall n \geq j\right\}.
$$
By construction, these sets are nested, $\Lambda_\tau^{\varepsilon}(p, j) \subset \Lambda_\tau^{\varepsilon}(p, j+1)$, and their union covers $\Lambda_\tau^{\varepsilon}(x_p)$ in terms of the measure $(\mu_{x_p})_{y_p}^{\eta_{x_p}}$:
$$
(\mu_{x_p})_{y_p}^{\eta_{x_p}}\left(\cup_j \Lambda_\tau^{\varepsilon}(p, j)\right)=(\mu_{x_p})_{y_p}^{\eta_{x_p}}\left(\Lambda_\tau^{\varepsilon}(x_p)\right).
$$

Let $\gamma > 0$ and $p=(x_p, y_p)\in \Gamma_\mu$ be arbitrary. Consider $$\Gamma_{\mu}(x_p):=\{y\in N: (x_p,y)\in \Gamma_{\mu}\}.$$ Since the union $\cup_j \Lambda_\tau^{\varepsilon}(p, j)$ covers $\Lambda_\tau^{\varepsilon}(x_p)$ in measure, and $\cup_{\varepsilon>0} \Lambda_\tau^{\varepsilon}(x_p)\supset\Gamma_\mu(x_p)$, we can choose $\varepsilon > 0$ sufficiently small and $L \in \mathbb{N}$ sufficiently large such that:
$$
(\mu_{x_p})_{y_p}^{\eta_{x_p}}\left(\Lambda_\tau^{\varepsilon}(p, j)\right) \geq 1-\frac{\gamma}{2}, \quad \forall j \geq L.
$$

For a given $n \in \mathbb{N}$, let $C_n \subset \Gamma_\mu(x_p)$ be a set such that $|C_n|$ represents the minimum number of $(n, \varepsilon)$-Bowen balls required to cover a subset of $\Gamma_\mu(x_p)$ with measure at least $\gamma$ under the measure $(\mu_{x_p})_{y_p}^{\eta_{x_p}}$. Moreover, assume that $U_n = \bigcup_{\bar{y} \in C_n} B^F_{x_p}(\bar{y}; n, \varepsilon)$ satisfies $(\mu_{x_p})_{y_p}^{\eta_{x_p}}(U_n) \geq \gamma$.

Therefore, we conclude
$$
(\mu_{x_p})_{y_p}^{\eta_{x_p}}\left(\Lambda_\tau^{\varepsilon}(p, n) \cap U_n\right) \geq \frac{\gamma}{2}, \quad \forall n \geq L.
$$
Let $A_n$ be the points $\bar{y} \in C_n$ for which the corresponding ball $B^F_{x_p}(\bar{y}; n, \varepsilon)$ has a non-empty intersection with $\Lambda_\tau^{\varepsilon}(p, n)$. That is, $A_n = \{ \bar{y} \in C_n : B^F_{x_p}(\bar{y}; n, \varepsilon) \cap \Lambda_\tau^{\varepsilon}(p, n) \neq \emptyset\}$.

The union of balls centered in $A_n$ covers the intersection $\Lambda_\tau^{\varepsilon}(p, n) \cap U_n$. For each $\bar{y} \in A_n$, choose an arbitrary point $\widetilde{y} \in B^F_{x_p}(\bar{y}; n, \varepsilon) \cap \Lambda_\tau^{\varepsilon}(p, n)$. Then,
$$
B^F_{x_p}(\bar{y}; n, \varepsilon) \subset B^F_{x_p}(\widetilde{y}; n, 2 \varepsilon).
$$

Hence for $n \geq L$,
\begin{align*}
\frac{\gamma}{2} &\leq (\mu_{x_p})_{y_p}^{\eta_{x_p}}\left(\bigcup_{\bar{y} \in A_n} B^F_{x_p}(\widetilde{y}; n, 2 \varepsilon)\right) \\ 
&\leq r_F\left(n, \varepsilon,(\mu_{x_p})_{y_p}^{\eta_{x_p}},\gamma\right) \times \sup _{y \in \Lambda_\tau^{\varepsilon}(p, n)} (\mu_{x_p})_{y_p}^{\eta_{x_p}}\left(B^F_{x_p}(y; n, 2 \varepsilon)\right) \\
&\leq r_F\left(n, \varepsilon,(\mu_{x_p})_{y_p}^{\eta_{x_p}},\gamma\right) \times e^{-n\left(h_F-\tau\right)}.
\end{align*}

Therefore for any $\varepsilon= \varepsilon(\gamma)$ small enough and $n$ large enough,
$$
r_F\left(n, \varepsilon,(\mu_{x_p})_{y_p}^{\eta_{x_p}},\gamma\right) \geq \frac{\gamma}{2} \cdot e^{n\left(h_F-\tau\right)}.
$$
Since $\gamma>0$ and $\tau>0$ were arbitrary, it follows that
$$
\begin{aligned}
&h_{F} \leq \inf _\gamma \lim _{\varepsilon \rightarrow 0} \liminf _{n \rightarrow+\infty} \frac{1}{n} \log r_F(n, \varepsilon,(\mu_{x_p})_{y_p}^{\eta_{x_p}},\gamma).
\end{aligned}
$$
The proof of the other inequality proceeds analogously using the identity with $\limsup$.
\end{proof}

\subsection{Entropy formulas on the projective bundle}
In order to analyze entropy in a way that is more compatible with the projective bundle structure, \cite{BCS} introduce a two-scale framework. First, let us recall some definitions.

Let $F$ be a skew product and recall Definition 3.1 of the lift $\widehat{F}$ acting on $M\times \widehat{N}$. That is, $$
\widehat{F} (x,(y,E)) = \bigl(f(x),\widehat{g}_x(y,E)\bigr),
$$  
where 
$$
\widehat{g}_x(y,E) = \bigl(g_x(y), D_yg_x (E)\bigr).
$$

Fix $x\in M$ and a point $\widehat{y} = (y, E) \in \widehat{N}$. Given two positive scales $\varepsilon, \widehat{\varepsilon} > 0$, for each $n \in \mathbb{N}$, we define the $x$-fibered $(n, \varepsilon, \widehat{\varepsilon})$-Bowen ball centered at $\widehat{y} \in \widehat{N}$ as the set
$$
\begin{aligned}
B^{\widehat{F}}_x(\widehat{y}, n, \varepsilon, \widehat{\varepsilon}) := \Big\{ \widehat{z} \in \widehat{N} : &\; d\bigl(\widehat{g}^j_x(\widehat{z}), \widehat{g}^j_x(\widehat{y})\big) < \widehat{\varepsilon} \text{ and } \\
&\; d(g^j_x(z)), g^j_x(y)) < \varepsilon \text{ for all } 0 \leq j < n \Big\}.
\end{aligned}
$$

Given a subset $\widehat{Z} \subset \widehat{N}$, we denote by
$$r^{\widehat{F}}_x(n, \varepsilon, \widehat{\varepsilon}, \widehat{Z})
$$ the $(n, \varepsilon, \widehat{\varepsilon})$-covering number, which is the minimal number of $x$-fibered $(n, \varepsilon, \widehat{\varepsilon})$-Bowen balls of order $n$ needed to cover $\widehat{Z}$.

Let $\widehat{\mu}$ be an $\widehat{F}$-invariant measure and fix $0 < \gamma < 1$. Consider the family $\{\widehat{\mu}_x\}_{x \in M}$ of conditional measures defined on the fibers $\{x\} \times \widehat{N}$. We then define the $(n, \varepsilon, \widehat{\varepsilon}, \gamma)$-covering number of $\widehat{\mu}_x$ as the minimal number of $x$-fibered $(n, \varepsilon, \widehat{\varepsilon})$-Bowen balls needed to cover a set of $\widehat{\mu}_x$-measure at least $\gamma$. This quantity is denoted by $$r^{\widehat{F}}_x(n, \varepsilon, \widehat{\varepsilon}, \widehat{\mu}_x, \gamma).
$$

For an $F$-invariant measure $\mu$, we can define, analogously, $r^{F}_x(n, \varepsilon, \mu_x, \gamma).$ If $\widehat{\mu}$ is a lift of $\mu$, Claim 4.4 in \cite{BCS} extends to the present context to obtain:

\begin{claim}\label{twoscales} Fix $x\in M$. For any $\widehat{\varepsilon},\alpha>0$, there exist $C, \varepsilon_{*}>0$ such that, for any $0<\varepsilon\leq\varepsilon_{*}$ and any $\gamma>0$, $$r^{\widehat{F}}_x(n, \varepsilon, \widehat{\varepsilon}, \widehat{\mu}_x, \gamma)\leq C e^{\alpha n}r^{F}_x(n, \varepsilon, \mu_x, \gamma).$$
\end{claim}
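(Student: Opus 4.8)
The plan is to compare covering numbers at two scales by controlling how much the projective coordinate $E$ can spread under iteration when the base point $y$ is already constrained. Fix $x\in M$ and $\widehat\varepsilon,\alpha>0$. The first step is to use the uniform $C^\infty$ continuity of $y\mapsto g_x$ (more precisely the compactness of $M$ and $N$, which makes $\|D^2 g_x\|$ and $\|Dg_x^{-1}\|$ uniformly bounded) to produce a Lipschitz-type estimate for the action of $D_y g_x$ on $\widehat N_x$: there is a constant $\Lambda\ge 1$, independent of $x$ and $n$, such that whenever $d(g_x^j(z),g_x^j(y))<\varepsilon$ for $0\le j<n$, the projective distance $d(\widehat g_x^j(\widehat z),\widehat g_x^j(\widehat y))$ expands at rate at most $\Lambda$ per step \emph{plus} an additive term proportional to $\varepsilon$ coming from the mismatch of the base points. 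Because $\widehat N$ is compact and $3$-dimensional, for fixed $n$ one can cover each "tube" of base-radius $\varepsilon$ over an $(n,\varepsilon)$-fibered Bowen ball in the base by at most $C\,\Lambda^n$ sets of $\widehat N_x$-diameter $\widehat\varepsilon$ in the $\widehat N_x$-direction — but $\Lambda^n$ is too lossy, so the key refinement (exactly as in Claim 4.4 of \cite{BCS}) is that one only needs to resolve the $E$-coordinate up to the \emph{fixed} scale $\widehat\varepsilon$, and the number of $\widehat\varepsilon$-balls needed to beat the accumulated expansion grows only like $e^{\alpha n}$ once $\varepsilon$ is taken small relative to $\alpha$ and $\widehat\varepsilon$.

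Concretely, the second step is: given a set $Z\subset N$ with $\mu_x(Z)>\gamma$ realized by $r^F_x(n,\varepsilon,\mu_x,\gamma)$ fibered Bowen balls $B^F_x(y_i;n,\varepsilon)$, lift $\widehat\mu_x$-mass to $\mu_x$-mass via the projection $\widehat\pi_{\widehat N}$ (using that $\widehat\mu$ is a lift of $\mu$, so the conditional measures project correctly up to a $\nu$-null set of $x$), so the preimages of the $B^F_x(y_i;n,\varepsilon)$ carry $\widehat\mu_x$-mass $\ge\gamma$. Over each $B^F_x(y_i;n,\varepsilon)$ one then subdivides the $\widehat N_x$-fiber: the set of $E$'s whose forward orbit stays $\widehat\varepsilon$-close is, by the estimate of the first step, contained in a union of at most $C e^{\alpha n}$ intervals of length $\widehat\varepsilon$ in $\widehat N_x$ (choosing $\varepsilon\le\varepsilon_*$ with $\varepsilon_*$ depending on $\widehat\varepsilon,\alpha$ so that the additive error never exceeds $\widehat\varepsilon$ and the multiplicative distortion is swallowed by $e^{\alpha n}$). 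Each such interval, together with the base ball $B^F_x(y_i;n,\varepsilon)$, is contained in one $x$-fibered $(n,\varepsilon,\widehat\varepsilon)$-Bowen ball $B^{\widehat F}_x(\widehat y;n,\varepsilon,\widehat\varepsilon)$. Counting gives a cover of a $\widehat\mu_x$-mass-$\ge\gamma$ set by at most $C e^{\alpha n} r^F_x(n,\varepsilon,\mu_x,\gamma)$ such balls, which is the claimed inequality.

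The third step is purely bookkeeping: verifying that all constants ($\Lambda$, $C$, $\varepsilon_*$) can be chosen independently of $n$ and of $\gamma$, and independently of $x$ if desired, which follows because every bound used is a $C^0$ bound on $Dg_x$, $D^2 g_x$, $Dg_x^{-1}$ over the compact set $M\times N$, and the number of $\widehat\varepsilon$-intervals needed in $\widehat N_x$ depends only on the geometry of the fixed compact manifold $\widehat N$.

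The main obstacle I expect is the first step — getting the right quantitative control on the spreading of the $E$-coordinate. A naive Gronwall estimate only yields the exponential factor $\Lambda^n$, which is useless; the point of \cite{BCS}'s two-scale device is precisely that one does \emph{not} track $E$ at scale $\varepsilon$ but only at the coarse fixed scale $\widehat\varepsilon$, and that the relevant count of how many coarse $E$-intervals are "live" after $n$ steps is subexponential in $n$ for $\varepsilon$ small. Transcribing this argument faithfully to the skew-product setting requires checking that the derivative cocycle $D_y g_x$ on $TN$ and its projectivization behave exactly as the derivative cocycle of a surface diffeomorphism does in \cite{BCS}; since $g_x$ acts $C^\infty$ on the $2$-dimensional fiber $N$ with all derivatives bounded uniformly in $x$, this transcription is routine, and the only genuinely new input compared to \cite{BCS} is the bookkeeping that the conditional measures $\widehat\mu_x$ of a lift disintegrate compatibly with $\mu_x$ under $\widehat\pi_{\widehat N}$, which is immediate from the definition of a lift.
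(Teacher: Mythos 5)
Your proposal is correct in structure and takes essentially the same approach as the paper, which itself gives no proof and simply cites Claim~4.4 of [BCS2] as extending to this setting. You correctly identify the three ingredients: that $\widehat\mu$ being a lift means $(\widehat\pi_{\widehat N})_*\widehat\mu_x=\mu_x$ for $\nu$-a.e.~$x$, so the $\widehat\pi_{\widehat N}$-preimages of the base Bowen balls carry $\widehat\mu_x$-mass at least $\gamma$; that over each base ball one subdivides the $\mathbb{P}^1$-fiber and counts slices; and that the crux is the subexponential count of slices, which is the content of [BCS2, Claim~4.4]. You are also right that a naive Gronwall estimate gives the useless factor $\Lambda^n$, and that the correct mechanism works at the coarse scale $\widehat\varepsilon$ only. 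One small caveat: the phrase ``the multiplicative distortion is swallowed by $e^{\alpha n}$'' suggests you may be picturing the subexponential bound as coming from the accumulated Lipschitz error being controlled by $\varepsilon$ small; the actual mechanism (as in BCS) is more geometric, namely that the projectivized cocycle acts by degree-one circle diffeomorphisms, so the total length of arcs after pushing forward a $\widehat\varepsilon$-partition of $\mathbb{P}^1$ is always $2\pi$, hence the number of subdivisions added per step is $O(1/\widehat\varepsilon)$, giving a count linear in $n$; the role of $\varepsilon\le\varepsilon_*$ is to make the $O(\varepsilon)$ perturbation across the base Bowen tube small compared to $\widehat\varepsilon$, not to beat an exponential. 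With that clarification, the outline is sound and the ``genuinely new input'' you single out (compatibility of the disintegrations under the projection) is indeed the only point that is not a verbatim transcription of the surface-diffeomorphism case.
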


Given an $F$-invariant measure $\mu$, we define the essential fiber entropy by $$\bar{h}_\mu(F\mid \pi):=\text{ess-sup}\; h_{\mu_e}(F\mid \pi),$$ where the supremum is taken over the ergodic components of $\mu$.

\begin{prop}\label{katok} Let $F$ be a skew product and $\mu$ an $F$-invariant measure such that $\pi_*\mu=\nu$. Given $\varepsilon>0$ and $\gamma>0$, for $\nu$-a.e $x\in M$, $$\limsup_{n\to \infty} \frac{1}{n} \log r^{F}_x(n, \varepsilon, \mu_x, \gamma)\leq \bar{h}_\mu(F\mid \pi).$$
\end{prop}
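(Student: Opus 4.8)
The plan is to reduce the statement to the ergodic case via the ergodic decomposition, and in the ergodic case to compare the fibered covering numbers $r^F_x(n,\varepsilon,\mu_x,\gamma)$ with a Katok-type entropy formula. First I would record the following: since $\bar h_\mu(F\mid\pi) = \text{ess-sup}\, h_{\mu_e}(F\mid\pi)$ and the fiber entropy is affine under the ergodic decomposition $\mu = \int \mu_e\, d\mathfrak{m}(e)$ (by Theorem~\ref{thm-ab-rok} applied to a generating partition, together with affinity of $h_{\mu_e}(f_*\mu_e=\nu_e$-part)), the conditional measures disintegrate compatibly: for $\nu$-a.e.\ $x$, $\mu_x = \int (\mu_e)_x\, d\mathfrak{m}_x(e)$ for an appropriate conditional measure $\mathfrak m_x$ on ergodic components. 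A set of $\mu_x$-measure at least $\gamma$ must then meet some ergodic component $e$ in $(\mu_e)_x$-measure at least $\gamma$ (in an averaged sense), which lets me bound $r^F_x(n,\varepsilon,\mu_x,\gamma)$, up to harmless constants and a loss in $\gamma$, by $\sup_e r^F_x(n,\varepsilon,(\mu_e)_x,\gamma')$. So it suffices to prove the inequality for ergodic $\mu$, with $\bar h_\mu(F\mid\pi)$ replaced by $h_\mu(F\mid\pi)$.

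For ergodic $\mu$ of saddle type, I would use Proposition~\ref{P3}: for $\mu$-a.e.\ $(x,y)$,
\[
h_\mu(F\mid\pi) = \inf_{\gamma>0}\lim_{\varepsilon\to 0}\liminf_{n\to\infty}\frac1n \log r_F\bigl(n,\varepsilon,(\mu_x)_y^{\eta_x},\gamma\bigr),
\]
where $r_F$ counts Bowen balls needed to cover a $\gamma$-fraction of the \emph{conditional-on-$\eta_x$} measure $(\mu_x)_y^{\eta_x}$. The issue is that the proposition I must prove concerns $\mu_x$ itself, not its further conditional $(\mu_x)_y^{\eta_x}$ along unstable plaques. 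I would bridge this by a covering argument: cover $N$ (or the relevant part) by countably many $\eta_x$-plaques; a $\gamma$-fraction of $\mu_x$ is captured by a $\gamma/2$-fraction spread over finitely many plaques, on each of which one applies the $r_F$ estimate with a rescaled $\gamma$. The number of plaques needed is sub-exponential in $n$ (it does not depend on $n$ at all, only on $\varepsilon$ and $\gamma$), so passing to $\frac1n\log$ and then $\liminf_n$, $\varepsilon\to 0$, $\inf_\gamma$ kills that factor and yields $\limsup_n \frac1n\log r^F_x(n,\varepsilon,\mu_x,\gamma) \le h_\mu(F\mid\pi)$ for $\nu$-a.e.\ $x$. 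For ergodic $\mu$ that are \emph{not} of saddle type (e.g.\ one or both fiber exponents nonpositive, or the exponents equal), one still has the trivial bound: the fibered Bowen balls in $N$ (a surface) with the generating/$C^\infty$ structure give $\limsup_n\frac1n\log r^F_x(n,\varepsilon,\mu_x,\gamma)\le h_\mu(F\mid\pi)$ directly from a fibered Brin--Katok / Katok entropy formula for the fiber system — here $h_\mu(F\mid\pi)\le \sum_{\lambda_j^c>0}\lambda_j^c\dim E_j$ by Theorem~\ref{T2}, and when there are no positive fiber exponents the fiber entropy is zero and fibered Bowen balls grow subexponentially.

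I expect the main obstacle to be the disintegration bookkeeping in the first step: one must make precise that the conditional measures $\mu_x$ of $\mu$ disintegrate over the ergodic decomposition in a way measurable in $x$, so that "a $\gamma$-set for $\mu_x$ meets some ergodic fiber conditional in definite measure" holds for $\nu$-a.e.\ $x$ simultaneously, rather than merely for $\mu$-a.e.\ $(x,\text{component})$. This is a standard but slightly delicate application of uniqueness of Rokhlin disintegrations and Fubini for the two nested partitions (fibers $\{x\}\times N$ and the $F$-invariant $\sigma$-algebra of ergodic components); I would handle it by choosing the ergodic decomposition to refine the partition into fibers is impossible, so instead I take the join and use that both are $\pi$-measurable downstairs. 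The second obstacle, minor by comparison, is checking that the finitely-many-plaques count in the saddle case is genuinely $n$-independent, which follows because $\eta$ is a fixed partition and the plaques $\eta_x(y)$ have uniformly bounded geometry on a Pesin set of measure close to $1$.
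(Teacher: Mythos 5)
Your plan departs significantly from the paper's argument, and the key "bridging" step from the conditional-on-plaques measure $(\mu_x)_y^{\eta_x}$ to $\mu_x$ itself contains a genuine gap. You propose to cover a $\gamma$-fraction of $\mu_x$ by finitely (or countably) many $\eta_x$-plaques and then apply the local covering bound from Proposition~\ref{P3} on each. But $\eta_x$ is a measurable partition of $N$ subordinate to one-dimensional unstable manifolds inside the two-dimensional fiber, so its atoms typically form a continuum and each individual plaque has $\mu_x$-measure zero; there is no finite subfamily of plaques that captures a definite fraction of $\mu_x$, and hence no sub-exponential "number of plaques" factor to discard. The same circularity appears in your treatment of the non-saddle ergodic case, where you invoke a "fibered Brin--Katok / Katok entropy formula" for the fiber system to conclude sub-exponential growth of covering numbers --- but that is precisely the statement being proved.

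The paper's route is more direct and sidesteps these issues entirely: it works with a fixed finite partition $\mathcal{P}$ of $N$ and the refinement $\mathcal{P}^n_x$, applies Belinskaja's fibered Shannon--McMillan--Breiman theorem to get $\lim_n -\tfrac1n\log\mu_x(\mathcal{P}^n_x(y)) = h_\mu(F\mid\pi,\mathcal{P})$ for $\mu$-a.e.\ $(x,y)$ in the ergodic case, and then runs Katok's classical argument (partition elements have measure $\geq e^{-n(h+\tau)}$ on a large set, so at most $e^{n(h+\tau)}$ of them are needed, and these compare to Bowen balls). The non-ergodic case is handled by a non-ergodic Belinskaja theorem obtained as in Denker--Grillenberger--Sigmund rather than by an ergodic-decomposition reduction. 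This avoids Proposition~\ref{P3} and the unstable-plaque conditionals altogether, which is the natural choice here since the statement to be proved is about $\mu_x$ globally, not about conditionals along unstable leaves. If you want to salvage an argument along your lines, you would need to replace the plaque covering by a comparison between $\mu_x$-Bowen-ball covering numbers and partition counts, which is exactly Katok's mechanism --- at which point you have essentially reconstructed the paper's proof.
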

\begin{proof}
    Let $\mathcal{P}$ be a partition of $N$. For $x\in M$ and $n\in \mathbb{N}$, consider the partition $\mathcal{P}^n_x$ as in Theorem \ref{thm-ab-rok}. Given $y\in N$, denote $\mathcal{P}^n_x(y)$ the element of $\mathcal{P}^n_x$ that contains $y$. 

    When the measure $\mu$ is ergodic, Belinskaja \cite{B} established a fibered version of the Shannon-McMillan-Breiman theorem which states that for $\mu$-a.e. $(x,y)$, $$\lim_{n\to \infty}-\frac{1}{n} \log \mu_{x}(\mathcal{P}^n_x(y))=h_\mu(F \mid \pi, \mathcal{P}).$$ Therefore, in this case the proposition follows analogously to the proof of Katok entropy formula \cite{Katok1980}.

    For the general case, it is possible to obtain a non-ergodic version of Belinskaja's theorem using the same reasoning than in \cite{DGS} to prove the non-ergodic version of Shannon-McMillan-Breiman. Then, the conclusion follows again by Katok's arguments.  
\end{proof}

Proposition \ref{katok} and Claim \ref{twoscales} implies the following result:

\begin{coro}\label{katok2}  Let $F$ be a skew product, $\mu$ an $F$-invariant measure such that $\pi_*\mu=\nu$ and $\widehat{\mu}$ is a lift of $\mu$. Given $\widehat{\varepsilon}>0$ and $\gamma>0$, for $\nu$-a.e $x\in M$, $$\lim_{\varepsilon\to 0}\limsup_{n\to \infty} \frac{1}{n} \log r^{\widehat{F}}_x(n, \varepsilon, \widehat{\varepsilon}, \widehat{\mu}_x, \gamma)\leq \bar{h}_\mu(F\mid \pi).$$
\end{coro}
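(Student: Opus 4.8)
The plan is to simply glue together the two preceding results, the only subtlety being the order of the quantifiers in $\varepsilon$. Fix $\widehat{\varepsilon}>0$ and $\gamma>0$ once and for all, and fix an auxiliary $\alpha>0$. By Claim \ref{twoscales}, for each $x$ there are constants $C=C(x,\widehat{\varepsilon},\alpha)>0$ and $\varepsilon_*=\varepsilon_*(x,\widehat{\varepsilon},\alpha)>0$ such that $r^{\widehat{F}}_x(n,\varepsilon,\widehat{\varepsilon},\widehat{\mu}_x,\gamma)\le C e^{\alpha n}\,r^{F}_x(n,\varepsilon,\mu_x,\gamma)$ for every $0<\varepsilon\le\varepsilon_*$ and every $n$. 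Applying $\tfrac1n\log$ and then $\limsup_{n\to\infty}$ annihilates the constant $C$ and gives $\limsup_{n}\tfrac1n\log r^{\widehat{F}}_x(n,\varepsilon,\widehat{\varepsilon},\widehat{\mu}_x,\gamma)\le\alpha+\limsup_{n}\tfrac1n\log r^{F}_x(n,\varepsilon,\mu_x,\gamma)$ for all $0<\varepsilon\le\varepsilon_*$.

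Next I would feed in Proposition \ref{katok}: for each fixed $\varepsilon>0$ it provides a $\nu$-full measure set of points $x$ on which $\limsup_{n}\tfrac1n\log r^{F}_x(n,\varepsilon,\mu_x,\gamma)\le\bar{h}_\mu(F\mid\pi)$. Since this set a priori depends on $\varepsilon$, I would fix a sequence $\varepsilon_m\downarrow 0$ and intersect the corresponding full measure sets (together with the full measure set where the conditionals $\mu_x$ and $\widehat{\mu}_x$ are well defined) to obtain a single $\nu$-full measure set $X_\infty$ that works simultaneously for all $\varepsilon_m$. Then, for every $x\in X_\infty$ and every $m$ with $\varepsilon_m\le\varepsilon_*$, we get $\limsup_{n}\tfrac1n\log r^{\widehat{F}}_x(n,\varepsilon_m,\widehat{\varepsilon},\widehat{\mu}_x,\gamma)\le\alpha+\bar{h}_\mu(F\mid\pi)$.

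To pass from the sequence $\varepsilon_m$ to the genuine limit $\varepsilon\to 0$, I would use monotonicity: shrinking $\varepsilon$ shrinks the $x$-fibered $(n,\varepsilon,\widehat{\varepsilon})$-Bowen balls, hence $\varepsilon\mapsto r^{\widehat{F}}_x(n,\varepsilon,\widehat{\varepsilon},\widehat{\mu}_x,\gamma)$ is non-increasing, so $\lim_{\varepsilon\to0}\limsup_{n}\tfrac1n\log r^{\widehat{F}}_x(n,\varepsilon,\widehat{\varepsilon},\widehat{\mu}_x,\gamma)$ exists in $[0,\infty]$ and equals the limit along $\varepsilon_m$. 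This yields $\lim_{\varepsilon\to0}\limsup_{n}\tfrac1n\log r^{\widehat{F}}_x(n,\varepsilon,\widehat{\varepsilon},\widehat{\mu}_x,\gamma)\le\alpha+\bar{h}_\mu(F\mid\pi)$ for every $x\in X_\infty$, and since $\alpha>0$ was arbitrary the $\alpha$ drops out, giving the claimed bound.

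The only point requiring a little care — and the sole place something could go wrong — is the interplay between $\alpha$ and the threshold $\varepsilon_*(\alpha)$ of Claim \ref{twoscales}, which may shrink to $0$ as $\alpha\to0$. This is harmless precisely because the statement already contains the outer limit $\varepsilon\to0$: for each fixed $\alpha$ we only ever use $\varepsilon\le\varepsilon_*(\alpha)$, conclude the bound with that $\alpha$, and only afterwards let $\alpha\to0$. I do not expect any genuine obstacle here; all the analytic content sits in Proposition \ref{katok} and Claim \ref{twoscales}, and this corollary is the routine bookkeeping that combines them.
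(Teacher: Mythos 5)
Your proof is correct and follows exactly the route the paper intends: the paper states the corollary with only the remark that it is implied by Proposition~\ref{katok} and Claim~\ref{twoscales}, and your argument supplies the bookkeeping that the paper leaves implicit — intersecting the $\nu$-full measure sets of Proposition~\ref{katok} over a countable sequence $\varepsilon_m\downarrow 0$, using the monotonicity in $\varepsilon$ of the covering numbers to pass from that sequence to the genuine $\varepsilon\to 0$ limit, and handling the $\alpha$-dependent threshold $\varepsilon_*(\alpha)$ of Claim~\ref{twoscales} by taking $\varepsilon\to 0$ before $\alpha\to 0$. No gap.
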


\section{Reparametrizations and entropy}
Reparametrizations and Yomdin theory play a key role in analyzing fiber entropy, as they enable us to bound it by controlling the number of reparametrizations. This approach works because fiber entropy can be characterized as the exponential growth rate of the iterations of a fiber-wise unstable manifold. Let us define the relevant objects.

\begin{defn}[Lift of a curve]
Let $r\geq 2$. For a fixed $x \in M$, we define a $C^{r}$ \textit{regular fiber curve} as the map
$$
\begin{aligned}
\sigma_x : [0,1] & \to M \times N, \\
t & \mapsto (x, \sigma(t)),
\end{aligned}
$$
where $\sigma : [0,1] \to N$ is a $C^{r}$ regular curve, meaning that $\sigma'(t) \neq 0$ for all $t \in [0,1]$.
Under this assumption, there exists a natural lift given by
\begin{gather*}
\widehat{\sigma}_x : [0,1] \;\longrightarrow\;M\times\widehat{N}, \\
\qquad t  \mapsto \bigl(x, \widehat{\sigma}(t)\bigr).
\end{gather*}
where $\widehat{\sigma}(t) = (\sigma(t), \mathbb{R}\sigma'(t))$.
\end{defn}

We define the $C^{r}$ norm of $\sigma_x$ as
$$
\|\sigma_x\|_{C^{r}} := \|\sigma\|_{C^{r}}
$$
Furthermore, we say that $\|\sigma_x\|_{C^{r}}<(\varepsilon,\widehat{\varepsilon})$ if
$$
\|\sigma\|_{C^{r}}<\varepsilon \text { and }\|\widehat{\sigma}\|_{C^{r}}<\widehat{\varepsilon} .
$$
\begin{defn}
Consider a $C^{r}$ curve $\sigma_x : [0,1] \to M\times N$. A reparametrization of $\sigma_x$ is any affine map $\psi : [0,1] \to [0,1]$ that is not constant.

Given a subset $J \subset [0,1]$, a family of reparametrizations of  $\sigma_x$ over $J$ is a collection $\mathcal{R}$ of such reparametrizations for which
$$
J \subset \bigcup_{\psi \in \mathcal{R}} \psi([0,1]).
$$
\end{defn}

We now explore how certain families of curve reparametrizations behave under repeated application of a skew product transformation. The central question is whether their geometric complexity, measured in the $C^{r}$ topology, stays controlled after several iterations. First, we choose a finite time scale $L$, a pair of small parameters $\varepsilon$ and $\widehat{\varepsilon}$, and a reference set $J \subset [0,1]$.

\begin{defn}
Let $r\geq 2$. Consider a skew product $F\colon M\times N \to M\times N$, a point $x \in M$ and a $C^{r}$ curve $\sigma_x$. A reparametrization $\psi$ of $\sigma_x$ is called $(C^{r}, F, L, \varepsilon, \widehat{\varepsilon})$-admissible up to time $n$ if there exists a finite, increasing sequence of times
$$
0 = n_0 < n_1 < \cdots < n_{\ell} = n
$$

such that the following two conditions are satisfied:
\begin{enumerate}[label = \arabic*.]
    \item \textbf{Step size bound:} For every $1 \leq j \leq \ell$, the time increment satisfies
    $$
   n_j - n_{j-1} \leq L.
   $$
    \item \textbf{Controlled image size:} For each $0 \leq j \leq \ell$, the image of the \\ reparametrized curve under the dynamics,
   $$
   F^{n_j} \circ \sigma_x \circ \psi,
   $$

   has $C^{r}$-size strictly bounded by $(\varepsilon, \widehat{\varepsilon})$.
   \end{enumerate}
\end{defn}

We refer to the integers $n_j$ as \textit{admissible times}. 
\begin{defn}
A collection $\mathcal{R}$ of reparametrizations of $\sigma_x$ over the set $J$ is said to be $(C^{r}, F, L, \varepsilon, \widehat{\varepsilon})$-admissible up to time $n$ if every map $\psi \in \mathcal{R}$ satisfies the $(C^{r}, F, L, \varepsilon, \widehat{\varepsilon})$-admissibility condition up to time $n$. 
\end{defn}

Recall that given a subset $\widehat{Z} \subset \widehat{N}$, we denote by
$$
r^{\widehat{F}}_x(n, \varepsilon, \widehat{\varepsilon}, \widehat{Z})
$$ the $(n, \varepsilon, \widehat{\varepsilon})$-covering number, which is the minimal number of $x$-fibered $(n, \varepsilon, \widehat{\varepsilon})$-Bowen balls of order $n$ needed to cover $\widehat{Z}$.


It turns out that, given an admissible family of reparametrizations, the minimal number of Bowen balls needed to cover the associated curve can be bounded by the same order of magnitude of the reparametrizations.

Let $\|D \widehat{F}\|_{\text {sup }}:=\sup_{x \in M} \sup_{\widehat{y} \in \widehat{N}} \|d_{\widehat{y}}\widehat{g}_x\|$.

\begin{lemma}\label{L1}
Let $r\geq 2$, $F$ be a skew product, $\sigma_x$ be a regular $C^{r}$-curve for some $x \in M$ and $J \subset [0,1]$. Fix two positive parameters $\varepsilon_*, \widehat{\varepsilon}_* > 0$, and an integer $L \geq 1$. Suppose $\mathcal{R}$ is a family of reparametrizations of $\sigma_x$ over $J$ which is $(C^{r}, F, L, \varepsilon_*, \widehat{\varepsilon}_*)$-admissible up to time $n$.

Then, for any $\varepsilon, \widehat{\varepsilon} > 0$, we conclude: $$
r_x^{\widehat{F}}(n, \varepsilon, \widehat{\varepsilon}, \widehat{\sigma}(J)) \leq \frac{2 \widehat{\varepsilon}_* \|D \widehat{F}\|_{\mathrm{sup}}^L}{\min(\varepsilon, \widehat{\varepsilon})} |\mathcal{R}|.
$$
\end{lemma}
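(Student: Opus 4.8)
The plan is to reduce the covering problem on the three-dimensional projective fibre bundle to a purely one-dimensional counting problem on the parameter interval $[0,1]$, using the admissibility structure to control distortion. Fix $\psi \in \mathcal{R}$ and let $0 = n_0 < n_1 < \cdots < n_\ell = n$ be its admissible times. The key observation is that for $t, t' \in [0,1]$ with $|t - t'|$ small, the lifted trajectories $\widehat{g}_x^j(\widehat{\sigma}(\psi(t)))$ and $\widehat{g}_x^j(\widehat{\sigma}(\psi(t')))$ stay $\widehat{\varepsilon}$-close, and likewise the base trajectories stay $\varepsilon$-close, for all $0 \le j < n$; hence the image under $\widehat{\sigma} \circ \psi$ of a short subinterval lies in a single $x$-fibered $(n, \varepsilon, \widehat{\varepsilon})$-Bowen ball. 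So the first step is to quantify ``short'': I would show that if $|t - t'| \le \delta$ then for each admissible time $n_j$ the curve $\widehat{F}^{n_j} \circ \sigma_x \circ \psi$, which has $C^r$-size (in particular Lipschitz constant) at most $\widehat{\varepsilon}_*$, moves by at most $\widehat{\varepsilon}_* \delta$ in the $\widehat{N}$-direction and at most $\varepsilon_* \delta$ in the $N$-direction; then for the intermediate times $n_{j} < m < n_{j+1}$ we apply the dynamics at most $L$ more steps, each expanding distances by at most $\|D\widehat{F}\|_{\sup}$, so the separation at time $m$ is at most $\widehat{\varepsilon}_* \|D\widehat{F}\|_{\sup}^L \delta$ (resp.\ $\varepsilon_*\|D\widehat{F}\|_{\sup}^L\delta$) in the respective directions.

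The second step is the counting. Choosing $\delta = \min(\varepsilon,\widehat{\varepsilon}) / (\widehat{\varepsilon}_* \|D\widehat{F}\|_{\sup}^L)$ guarantees that $\widehat{\sigma} \circ \psi$ maps any subinterval of length $\le \delta$ into one $(n,\varepsilon,\widehat{\varepsilon})$-Bowen ball (here I use $\widehat{\varepsilon}_* \ge 1$ up to harmless normalization, or otherwise keep the constant as written; the bound in the statement is $2\widehat{\varepsilon}_*\|D\widehat{F}\|_{\sup}^L/\min(\varepsilon,\widehat{\varepsilon})$, which absorbs this). Partitioning $[0,1]$ into $\lceil 1/\delta \rceil \le 2/\delta$ such subintervals, the image $\widehat{\sigma}(\psi([0,1]))$ is covered by at most $2/\delta$ Bowen balls. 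Since $\mathcal{R}$ is a family of reparametrizations over $J$, we have $J \subset \bigcup_{\psi \in \mathcal{R}} \psi([0,1])$, hence $\widehat{\sigma}(J) \subset \bigcup_{\psi \in \mathcal{R}} \widehat{\sigma}(\psi([0,1]))$, and summing over the $|\mathcal{R}|$ reparametrizations gives
$$
r_x^{\widehat{F}}(n,\varepsilon,\widehat{\varepsilon},\widehat{\sigma}(J)) \le \frac{2}{\delta}\,|\mathcal{R}| = \frac{2\widehat{\varepsilon}_*\|D\widehat{F}\|_{\sup}^L}{\min(\varepsilon,\widehat{\varepsilon})}\,|\mathcal{R}|.
$$

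The main obstacle I anticipate is bookkeeping the two-scale distortion estimate cleanly: one must track the $N$-component and the $\widehat{N}$-component of the Bowen ball condition simultaneously, and verify that between consecutive admissible times the controlled $C^r$-size at the endpoints $n_j$ (rather than at every intermediate step) still yields the required closeness after at most $L$ additional iterates — this is exactly where condition~1 (step size $\le L$) and condition~2 (controlled image size at admissible times) of admissibility are both used, and where the factor $\|D\widehat{F}\|_{\sup}^L$ enters. Everything else is a mean value / Lipschitz estimate and an elementary interval-partition count, following Claim 4.4 and the analogous lemma in \cite{BCS}.
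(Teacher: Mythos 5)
Your proof is correct and follows essentially the same route the paper intends (the paper simply defers to Lemma~4.12 of \cite{BCS}): Lipschitz control of the curve at admissible times from the $C^r$-size bound, propagation through the at-most-$L$ intermediate steps via $\|D\widehat{F}\|_{\sup}^L$, and an interval-partition count giving the factor $2/\delta$. One small exposition point: you do not actually need the $\varepsilon_*$ bound or the normalization $\widehat{\varepsilon}_*\geq 1$ --- since the metric on $\widehat{N}$ is $\sqrt{ds^2+d\theta^2}$, the projection $\widehat{N}\to N$ is $1$-Lipschitz, so the $\widehat{\varepsilon}_*$ control on $\|\widehat{F}^{n_j}\circ\widehat{\sigma}_x\circ\psi\|_{C^1}$ already dominates the $N$-component, which is exactly why only $\widehat{\varepsilon}_*$ survives in the stated constant.
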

\begin{proof}
The proof follows analogously to Lemma 4.12 of \cite{BCS}.
\end{proof}
The connection between curves, reparametrizations, and entropy is established by the following theorem.
As shown in Section \ref{sec: ENTROPY FORMULAS}, fiber entropy can be characterized by the exponential growth rate of the iterates of the fiber unstable manifold.
Given that, in the present setting, the fiber unstable manifolds are curves, we obtain the following relation:
\begin{theo}\label{entropy}
Fix $r\geq 2$. Let $F$ be a skew product and let $\mu$ be an ergodic, saddle type invariant measure with a system of conditional measures $\{ (\mu_x)_y^{\eta_x} \}$ supported on local fiber unstable manifolds.

Then, for $\mu$-a.e. $(x,y)$, and for any choice of:
\begin{itemize}
    \item a $C^{\infty}$ regular parameterized curve $\sigma: [0,1] \to \widetilde{W}^u(x, y)$,
    \item and a subset $J \subset [0,1]$ such that $(\mu_x)_{y}^{\eta_x}(\sigma(J)) > 0$,
\end{itemize}
we obtain the following entropy estimate.

Suppose $\mathcal{R}_n$ is a sequence of families of reparametrizations of the curve $\sigma$ over $J$, each of which is $(C^{r}, F, L, \varepsilon_*, \widehat{\varepsilon}_*)$-admissible up to time $n$, for some (arbitrary but fixed) constants $\varepsilon_*, \widehat{\varepsilon}_* > 0$ and an integer $L \geq 1$, independent of $n$. Then, the fiber entropy of $\mu$ satisfies:
$$
h_\mu(F \mid \pi) \leq \liminf_{n \to \infty} \frac{1}{n} \log |\mathcal{R}_n|.
$$
\end{theo}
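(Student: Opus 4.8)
The plan is to combine the Ledrappier–Young type characterization of fiber entropy from Section \ref{sec: ENTROPY FORMULAS} with the covering bound of Lemma \ref{L1}, passing through the projective bundle so that the two-scale covering numbers can be controlled. First I would fix a point $(x,y)$ in the intersection of the full-measure sets coming from Proposition \ref{P3} and Corollary \ref{katok2} (the latter applied to the unstable lift $\widehat{\mu}^+$ of $\mu$, whose conditional measures on the fibers $\{x\}\times\widehat N$ I will denote $\widehat{\mu}_x^+$). By Proposition \ref{P3}, for any $\gamma>0$,
$$
h_\mu(F\mid\pi)=\inf_{\gamma>0}\lim_{\varepsilon\to 0}\liminf_{n\to\infty}\frac{1}{n}\log r_F\bigl(n,\varepsilon,(\mu_x)_y^{\eta_x},\gamma\bigr),
$$
so it suffices to bound $r_F(n,\varepsilon,(\mu_x)_y^{\eta_x},\gamma)$ from above by (a constant times $e^{\alpha n}$ times) $|\mathcal{R}_n|$ for every small $\alpha$. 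The route is: $r_F \lesssim r^F_x(n,\varepsilon,(\mu_x)_y^{\eta_x},\gamma)$ once $\gamma$ is adjusted to the mass $(\mu_x)_y^{\eta_x}(\sigma(J))>0$; then relate $r^F_x$ to the projective covering number $r^{\widehat F}_x(n,\varepsilon,\widehat\varepsilon,\widehat\mu_x^+,\gamma)$; and finally bound the latter using Lemma \ref{L1}, since the admissible family $\mathcal{R}_n$ covers $\widehat\sigma(J)$ and $\widehat\sigma(J)$ carries $\widehat\mu_x^+$-mass comparable to $(\mu_x)_y^{\eta_x}(\sigma(J))$ because $\widehat\mu^+$ is the lift concentrated on $\operatorname{graph}(E_1)$ and $\sigma$ parametrizes a piece of $\widetilde W^u(x,y)$, which is tangent to $E_1$.

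Carrying this out, the key steps in order are: (1) choose $\gamma_0:=\tfrac12(\mu_x)_y^{\eta_x}(\sigma(J))>0$ and observe that the projective conditional $\widehat\mu_x^+$ assigns mass $\geq\gamma_0$ to $\widehat\sigma(J)$, since on $\widetilde W^u$ the Oseledets direction $E_1$ coincides with $\mathbb{R}\sigma'$, so the lift $\bigl(\operatorname{Id},E_1\bigr)$ pushes $(\mu_x)_y^{\eta_x}\!\restriction_{\sigma(J)}$ onto $\widehat\mu_x^+\!\restriction_{\widehat\sigma(J)}$; (2) apply Lemma \ref{L1} with parameters $\varepsilon_*,\widehat\varepsilon_*$ to get, for the prescribed $\varepsilon$ and any fixed small $\widehat\varepsilon$,
$$
r^{\widehat F}_x\bigl(n,\varepsilon,\widehat\varepsilon,\widehat\mu_x^+,\gamma_0\bigr)\le r^{\widehat F}_x\bigl(n,\varepsilon,\widehat\varepsilon,\widehat\sigma(J)\bigr)\le \frac{2\widehat\varepsilon_*\|D\widehat F\|_{\mathrm{sup}}^L}{\min(\varepsilon,\widehat\varepsilon)}\,|\mathcal{R}_n|;
$$
(3) apply Corollary \ref{katok2} to the lift $\widehat\mu^+$ to conclude that the left-hand side also satisfies $\lim_{\varepsilon\to 0}\limsup_n \frac1n\log r^{\widehat F}_x(n,\varepsilon,\widehat\varepsilon,\widehat\mu_x^+,\gamma_0)\le \bar h_\mu(F\mid\pi)=h_\mu(F\mid\pi)$ by ergodicity — but this is the wrong direction; instead, (3$'$) compare $r^F_x$ with $r^{\widehat F}_x$ directly via Claim \ref{twoscales}, which gives $r^{\widehat F}_x(n,\varepsilon,\widehat\varepsilon,\widehat\mu_x^+,\gamma_0)\le Ce^{\alpha n}r^F_x(n,\varepsilon,\mu_x,\gamma_0)$ — again the wrong direction, so what I actually need is the reverse inequality relating the fibered covering number of the base measure to that of the lift. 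The honest route is: the $x$-fibered $(n,\varepsilon,\widehat\varepsilon)$-Bowen balls in $\widehat N$ project to $x$-fibered $(n,\varepsilon)$-Bowen balls in $N$, and a single projective Bowen ball can meet the $E_1$-section in a set covered by boundedly many (in fact one, up to the $\widehat\varepsilon$-scale on the fiber) base Bowen balls; so $r^F_x(n,\varepsilon,(\mu_x)_y^{\eta_x},\gamma_0)\lesssim r^{\widehat F}_x(n,\varepsilon,\widehat\varepsilon,\widehat\mu_x^+,\gamma_0')$ for a slightly smaller $\gamma_0'$, using that the projection of $\widehat\mu_x^+$ is $(\mu_x)_y^{\eta_x}$-related on the relevant piece. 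Combining (1), (2) and this last comparison yields
$$
r_F\bigl(n,\varepsilon,(\mu_x)_y^{\eta_x},\gamma_0'\bigr)\le C'\,e^{\alpha n}\,\frac{2\widehat\varepsilon_*\|D\widehat F\|_{\mathrm{sup}}^L}{\min(\varepsilon,\widehat\varepsilon)}\,|\mathcal{R}_n|,
$$
and taking $\frac1n\log$, then $\liminf_n$, then $\varepsilon\to 0$, then $\inf_\gamma$, and letting $\alpha\to 0$ gives $h_\mu(F\mid\pi)\le \liminf_n \frac1n\log|\mathcal{R}_n|$, as the $\widehat\varepsilon$-dependent prefactor is subexponential.

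The main obstacle I anticipate is step (3$'$): making precise the comparison between the fibered covering numbers of $\mu_x$ on $N$ and of the unstable lift $\widehat\mu_x^+$ on $\widehat N$ in the direction needed here, i.e. bounding the \emph{base} covering number by the \emph{projective} one rather than the reverse (which is what Claim \ref{twoscales} gives). The resolution should be that, because $\widehat\mu^+$ is carried by $\operatorname{graph}(E_1)$ and the conditional measures $(\mu_x)_y^{\eta_x}$ of $\mu$ along $\widetilde W^u$ lift canonically and bijectively onto those of $\widehat\mu^+$ along $\operatorname{graph}(E_1)$, the two covering numbers differ only by a bounded factor depending on $\widehat\varepsilon$ (the fiber scale does not help shrink a base Bowen ball, and at worst a projective ball of fiber-radius $\widehat\varepsilon$ splits into $O(\widehat\varepsilon^{-1})$ pieces which is irrelevant after $\frac1n\log$); this is exactly the kind of statement proved as Claim 4.4 / Lemma 4.12 in \cite{BCS}, so I would cite and adapt that argument rather than redo it. A secondary technical point is ensuring the curve $\sigma$, being $C^\infty$ and regular with image in $\widetilde W^u(x,y)$, has its lift $\widehat\sigma$ landing in $\operatorname{graph}(E_1)$ with $\widehat\sigma(J)$ of positive $\widehat\mu_x^+$-conditional mass — this follows from the tangency $T_{(x,y)}\widetilde W^u=E_1(x,y)$ together with $F$-invariance of $E_1$ and the subordinate partition property, but it must be stated carefully so that the mass does not degenerate.
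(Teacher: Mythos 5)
Your proposal lands on the same essential argument as the paper: apply Lemma~\ref{L1} to bound the projective covering number $r_x^{\widehat F}(n,\varepsilon,\widehat\varepsilon,\widehat\sigma(J))$ by $C\,|\mathcal{R}_n|$, then observe that $x$-fibered $(n,\varepsilon,\widehat\varepsilon)$-Bowen balls in $\widehat N$ project to $x$-fibered $(n,\varepsilon)$-Bowen balls in $N$ so that $r_x^F(n,\varepsilon,\sigma(J)) \leq r_x^{\widehat F}(n,\varepsilon,\widehat\varepsilon,\widehat\sigma(J))$, and finally invoke Proposition~\ref{P3} to pass from this covering bound to the fiber entropy. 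You correctly recognize mid-stream that Corollary~\ref{katok2} and Claim~\ref{twoscales} point the wrong way and reject them; the ``honest route'' you then describe is exactly what the paper does.

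That said, two of your worries are unnecessary detours. First, the ``mass transfer'' between $(\mu_x)_y^{\eta_x}$ on $N$ and $\widehat\mu_x^+$ on $\widehat N$ plays no role: Proposition~\ref{P3} expresses $h_\mu(F\mid\pi)$ as $\inf_\gamma\lim_\varepsilon\liminf_n\frac1n\log r_F(n,\varepsilon,(\mu_x)_y^{\eta_x},\gamma)$, and since $(\mu_x)_y^{\eta_x}(\sigma(J))>0$ the \emph{set} $\sigma(J)$ is an admissible competitor in the infimum defining $r_F$ for any $\gamma<(\mu_x)_y^{\eta_x}(\sigma(J))$, giving $r_F(n,\varepsilon,(\mu_x)_y^{\eta_x},\gamma)\le r_x^F(n,\varepsilon,\sigma(J))$ with no reference to the lifted measure at all. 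Second, the residual $e^{\alpha n}$ factor in your final display is a leftover from the abandoned Claim~\ref{twoscales} route; once you use the direct projection it simply is not there, and the constant is $n$-independent, so the $\frac1n\log$ and $\liminf_n$ already kill it without the extra limit $\alpha\to0$. Cleaned of these artifacts, your argument coincides with the paper's.
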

\begin{proof}
    Fix constants $\varepsilon_*, \widehat{\varepsilon}_*, L > 0$ and let $\mathcal{R}_n$ be families of admissible reparametrizations as given in the statement. Let $\widehat{\sigma} : [0,1] \to \widehat{N}$ denote the canonical lift of the regular curve $\sigma$. Fix arbitrarily small $\varepsilon, \widehat{\varepsilon} > 0$. By Lemma \ref{L1}, we know:
$$
r_x^{\widehat{F}}(n, \varepsilon, \widehat{\varepsilon}, \widehat{\sigma}(J)) \leq \frac{2 \widehat{\varepsilon}_* \|D \widehat{F}\|_{\mathrm{sup}}^L}{\min(\varepsilon, \widehat{\varepsilon})} \cdot |\mathcal{R}_n|.
$$
Let us denote the constant on the right-hand side by
$$
C := C\left(F, \varepsilon, \widehat{\varepsilon}, \widehat{\varepsilon}_*, L\right) := \frac{2 \widehat{\varepsilon}_* \|D \widehat{F}\|_{\mathrm{sup}}^L}{\min(\varepsilon, \widehat{\varepsilon})},
$$
which is independent of $n$.
By the definition of $r_x^{\widehat{F}}(n, \varepsilon, \widehat{\varepsilon}, \widehat{\sigma}(J))$, there exist points $\widehat{y}_1, \ldots, \widehat{y}_\ell \in \widehat{N}$, with $\ell \leq C |\mathcal{R}_n|$, such that
$$
\widehat{\sigma}(J) \subset \bigcup_{i=1}^\ell B_x^{\widehat{F}}(\widehat{y}_i, n, \varepsilon, \widehat{\varepsilon}).
$$
Recall that $\widehat{\pi}_{\widehat{N}}: \widehat{N} \to N$ is given by $\widehat{\pi}_{\widehat{N}}(y, E) = y$. Projecting to the base, it follows that
$$
\sigma(J) \subset \bigcup_{i=1}^\ell B_x^F(\widehat{\pi}_{\widehat{N}}(\widehat{y}_i), n, \varepsilon),
$$
and therefore,
$$
r_x^F(n, \varepsilon, \sigma(J)) \leq r_x^{\widehat{F}}(n, \varepsilon, \widehat{\varepsilon}, \widehat{\sigma}(J)) \leq C |\mathcal{R}_n|.
$$
Applying Proposition \ref{P3}, we obtain:
$$
h_\mu(F \mid \pi) \leq \lim_{\varepsilon \to 0} \liminf_{n \to \infty} \frac{1}{n} \log r_x^F(n, \varepsilon, \sigma(J)) \leq \liminf_{n \to \infty} \frac{1}{n} \log |\mathcal{R}_n|.
$$
\end{proof}

\subsection{Reparametrization results}
As in the work of \cite{BCS}, one of the main tools we employ is Yomdin theory. We present here the statement of a version that closely follows theirs. In particular, it is possible to prove the existence of admissible reparametrizations whose cardinality can be controlled by the minimal number of Bowen balls needed to cover the curve. The proof of the proposition below is an easy adaptation of Corollary 4.14 of \cite{BCS}.

Let $r\geq 2$ and consider the following quantities:
$$
Q_r(F):=\max \left(\sup_{x \in M} \|g_x\|_{C^{r}},\; \sup_{x \in M} \|\widehat{g}_x\|_{C^{r}}\right),\quad Q_{r,L}(F):=\max _{j=1,2, \ldots, L} Q_r\left(F^j\right).
$$

Recall that $\|D \widehat{F}\|_{\text {sup }}=\sup_{x \in M} \sup_{\widehat{y} \in \widehat{N}} \|d_{\widehat{y}}\widehat{g}_x\|$.

\begin{prop}\label{yomdin} Let $F$ be a skew product. For every $2\leq r<\infty$ and $Q > 0$, there exist constants $\Upsilon=\Upsilon(r) > 0$ and $\varepsilon_Y = \varepsilon_Y(r,Q) > 0$ with the following property:  Given $x \in M$, suppose
\begin{enumerate}[label = (\arabic*)]
    \item $Q_{r, L}(F) < Q$,
    \item  The curve $\sigma_x : [0,1] \to M \times N$ is regular and has $C^{r}$-size bounded by $(\varepsilon, \widehat{\varepsilon})$, where $0 < \varepsilon, \widehat{\varepsilon} < \varepsilon_Y$,
    \item $L, n \geq 1$, $J \subset [0,1]$.
\end{enumerate}
Under these assumptions, there exists a collection $\mathcal{R}$ of reparametrizations of $\sigma$ over $J$, which is $(C^{r}, F, L, \varepsilon, \widehat{\varepsilon})$-admissible up to time $n$, and whose cardinality satisfies the estimate:
$$
|\mathcal{R}| \leq  C(r,F) \cdot r_{f(x)}^{\widehat{F}}\bigl(n, \varepsilon, \widehat{\varepsilon}, \pi_2(\widehat{F} \circ \widehat{\sigma}_x(J))\bigr),
$$ where $$C(r,F)=\Upsilon(r)^{\left\lceil \frac{n}{L}\right\rceil}\|D\widehat{F}^L\|^{\frac{\left\lfloor \frac{n}{L}\right\rfloor}{r-1}}_{\sup}\|D\widehat{F}^{n-L\lfloor \frac{n}{L}\rfloor}\|^{\frac{1}{r-1}}_{\sup}$$ and $\pi_2\colon M\times \widehat{N}\to \widehat{N}$ is the projection onto the second coordinate. 
\end{prop}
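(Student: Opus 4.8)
\emph{Overview.} The proof is the fibered analogue of the iterative scheme behind Corollary 4.14 of \cite{BCS}: one iterates the basic Yomdin reparametrization theorem in blocks of length at most $L$, following the itinerary prescribed by a fixed cover by Bowen balls.

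\emph{The one-step input.} Recall the basic Yomdin reparametrization statement: for each $r\ge 2$ and $Q>0$ there are constants $\Upsilon(r)\ge 1$ and $\varepsilon_Y(r,Q)>0$ such that, whenever $G$ is a skew product with $Q_r(G)<Q$, $z\in M$, and $\tau_z$ is a regular $C^{r}$ fiber curve over $z$ of $C^{r}$-size $<(\varepsilon,\widehat\varepsilon)$ with $0<\varepsilon,\widehat\varepsilon<\varepsilon_Y$, then for every $\widehat w\in\widehat N$ the set of parameters $t$ for which $\pi_2(\widehat G(\widehat\tau_z(t)))$ lies in the two-scale ball of radii $(\varepsilon,\widehat\varepsilon)$ around $\pi_2(\widehat G(z,\widehat w))$ carries a family of at most $\Upsilon(r)\,\|D\widehat G\|_{\sup}^{1/(r-1)}$ affine reparametrizations of $\tau_z$, each making $G\circ\tau_z\circ\psi$ a regular $C^{r}$ fiber curve of $C^{r}$-size $<(\varepsilon,\widehat\varepsilon)$. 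In the iteration below $G$ will be one of the iterates $F^{m}$ with $1\le m\le L$; the hypothesis $Q_{r,L}(F)<Q$ ensures $Q_r(F^{m})<Q$ for all such $m$, so $\Upsilon(r)$ and $\varepsilon_Y(r,Q)$ are independent of $m$, hence of $n$.

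\emph{Cover and induction.} Put $\widehat Z:=\pi_2(\widehat F\circ\widehat\sigma_x(J))\subset\widehat N$ and $\widehat z(t):=\pi_2(\widehat F(\widehat\sigma_x(t)))$. Since $\widehat F^{\,m}\circ\widehat\sigma_x$ is the canonical lift of $F^{\,m}\circ\sigma_x$ (chain rule, using regularity of $\sigma$), the admissibility constraints at all positive times $1\le i\le n$ are governed by the $\widehat g_{f(x)}$-orbit of $\widehat z(t)$. Fix a cover of $\widehat Z$ by $r^{\widehat F}_{f(x)}(n,\varepsilon,\widehat\varepsilon,\widehat Z)$ many $f(x)$-fibered $(n,\varepsilon,\widehat\varepsilon)$-Bowen balls and let $\widehat w$ be the center of one of them. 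Choose admissible times $0=n_0<n_1<\cdots<n_\ell=n$ with $n_j-n_{j-1}=L$ for $1\le j<\ell$ and $n_\ell-n_{\ell-1}=n-L\lfloor n/L\rfloor$, so $\ell=\lceil n/L\rceil$. By induction on $j$ we build a family $\mathcal R_j(\widehat w)$ of affine reparametrizations of $\sigma_x$ such that:
\begin{enumerate}[label=(\roman*)]
\item for each $\psi\in\mathcal R_j(\widehat w)$ and each $0\le i\le j$, the curve $F^{n_i}\circ\sigma_x\circ\psi$ has $C^{r}$-size $<(\varepsilon,\widehat\varepsilon)$;
\item the images $\psi([0,1])$, $\psi\in\mathcal R_j(\widehat w)$, cover $\bigl\{t\in J:\ \widehat g^{\,i}_{f(x)}(\widehat z(t))\in B(\widehat g^{\,i}_{f(x)}(\widehat w),\varepsilon,\widehat\varepsilon)\ \text{for all }0\le i<n_j\bigr\}$;
\item $|\mathcal R_j(\widehat w)|\le\prod_{i=1}^{j}\Upsilon(r)\,\|D\widehat F^{\,n_i-n_{i-1}}\|_{\sup}^{1/(r-1)}$.
\end{enumerate}
For $j=0$ take $\mathcal R_0(\widehat w)=\{\mathrm{id}\}$; (i) holds by hypothesis (2). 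For the step $j-1\to j$, given $\psi\in\mathcal R_{j-1}(\widehat w)$ the curve $\tau:=F^{n_{j-1}}\circ\sigma_x\circ\psi$ is a regular $C^{r}$ fiber curve of $C^{r}$-size $<(\varepsilon,\widehat\varepsilon)$ by (i); apply the one-step statement with $G=F^{\,n_j-n_{j-1}}$ (legitimate since $n_j-n_{j-1}\le L$) and target center the $n_{j-1}$-th iterate of $\widehat w$, then precompose the resulting reparametrizations with $\psi$ (composition of affine self-maps of $[0,1]$ is affine) and collect over all $\psi\in\mathcal R_{j-1}(\widehat w)$. Properties (i)--(iii) are immediate from the one-step statement and the inductive hypothesis.

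\emph{Conclusion and main difficulty.} Set $\mathcal R:=\bigcup_{\widehat w}\mathcal R_\ell(\widehat w)$, the union over the centers of the fixed Bowen cover. By (i) every $\psi\in\mathcal R$ is $(C^{r},F,L,\varepsilon,\widehat\varepsilon)$-admissible up to time $n$ with admissible times $n_0,\dots,n_\ell$; since those Bowen balls cover $\widehat Z=\pi_2(\widehat F\circ\widehat\sigma_x(J))$, property (ii) at $j=\ell$ shows that $\mathcal R$ is a family of reparametrizations of $\sigma$ over $J$; and by (iii), together with $n_i-n_{i-1}\le L$, $\Upsilon(r)\ge 1$ and $\|D\widehat F^{L}\|_{\sup}\ge 1$, we obtain
$$
|\mathcal R|\ \le\ r^{\widehat F}_{f(x)}\!\bigl(n,\varepsilon,\widehat\varepsilon,\pi_2(\widehat F\circ\widehat\sigma_x(J))\bigr)\cdot\Upsilon(r)^{\lceil n/L\rceil}\,\|D\widehat F^{L}\|_{\sup}^{\lfloor n/L\rfloor/(r-1)}\,\|D\widehat F^{\,n-L\lfloor n/L\rfloor}\|_{\sup}^{1/(r-1)},
$$
which is exactly the asserted bound with $C(r,F)$ as in the statement. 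The argument is essentially bookkeeping; the only points needing care are that $\varepsilon_Y(r,Q)$ can be chosen uniformly over the finitely many block maps $F^{m}$ ($1\le m\le L$), hence independently of $n$, and that a reparametrization controlling the $C^{r}$-size of $\sigma$ on $N$ simultaneously controls that of its lift $\widehat\sigma$ on $\widehat N$ — which is precisely what the joint quantity $Q_r(F)=\max(\sup_x\|g_x\|_{C^{r}},\sup_x\|\widehat g_x\|_{C^{r}})$ and the two-scale notion of $C^{r}$-size are designed to guarantee. All remaining verifications are the verbatim iteration scheme of \cite[Corollary 4.14]{BCS}.
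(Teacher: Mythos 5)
Your proposal is correct and follows essentially the same approach as the paper, which simply defers the argument to ``an easy adaptation of Corollary~4.14 of \cite{BCS}'': iterate the fibered Yomdin one-step reparametrization lemma in blocks of length at most $L$ along the itinerary dictated by a fixed cover of $\pi_2(\widehat F\circ\widehat\sigma_x(J))$ by fibered $(n,\varepsilon,\widehat\varepsilon)$-Bowen balls, and multiply the per-block bounds. One small bookkeeping slip: when $L\mid n$ your formulas for the admissible times produce a degenerate final step of length $n-L\lfloor n/L\rfloor=0$; in that case one should simply take $\ell=n/L$ equal blocks of length $L$, which still gives the stated constant since $\|D\widehat F^{\,0}\|_{\sup}=1$, so the final bound is unaffected.
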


Observe that the constant $C(r,F)$ depends on the regularity $r$ that has been fixed. Although the map is acting $C^{\infty}$ in the fibers, since we do not know how the constant $\Upsilon(r)$ behaves with $r$, we are not able at this stage to simplify this quantity considering $r\to \infty$. 

In the following, we present a consequence of Proposition \ref{yomdin}. The next result generalizes to the present setting Proposition 5.1 of \cite{BCS}. We remark that the main difficulty appears when we apply Corollary \ref{katok2} since now all the parameters depends on the point $x\in M$. 

Define $\lambda(\widehat{F}):=\lim_{n\to \infty} \frac{1}{n}\log \|D\widehat{F}^n\|_{\sup}$.

\begin{prop}\label{5.1}
Let $F$ be a skew product, $\mu$ an $F$-invariant measure such that $\pi_*\mu=\nu$ and $\widehat{\mu}$ is a lift of $\mu$. Consider $2\leq r <\infty$ and real numbers $Q, \rho, \gamma > 0$. Let $\varepsilon_Y$ be given by Proposition \ref{yomdin}. Then, there exists $L_1 > 0$, $\varepsilon_1>0$ and $n_1>0$ such that for any $L \geq L_1$, $0 < \varepsilon<\varepsilon_1$, $0<\widehat{\varepsilon} < \varepsilon_Y$ and $n \geq n_1$, there exist
\begin{enumerate}[label=\emph{(\alph*)}]
    \item a neighborhood $\mathcal{U}_1$ of $F$,
    \item an open set $\widehat{U}_{1} \subset M\times \widehat{N}$ with $\widehat{\mu}(\widehat{U}_{1})>1-\gamma^2$ and $\widehat{\mu}(\partial \widehat{U}_{1})=0$,
such that the following property holds:
\end{enumerate}
For every skew product $G \in \mathcal{U}_1$ satisfying $Q_{r,L}(G) < Q$, and for every regular curve $\sigma_x$ with $C^{r}$-size bounded by $(\varepsilon, \widehat{\varepsilon})$, there exists a family $\mathcal{R}$ of reparametrizations over $\widehat{\sigma}_x^{-1}(\widehat{U}_{1})$ such that
\begin{enumerate}
    \item[$\bullet$] $\mathcal{R}$ is $\left(C^{r}, G, L, \varepsilon, \widehat{\varepsilon}\right)$-admissible up to time $n$,
    \item[$\bullet$] $|\mathcal{R}| \leq \exp \left[n\left(\bar{h}_{\mu}(F\mid\pi)+\frac{\lambda(\widehat{F})}{r-1}+\rho\right)\right]$.
\end{enumerate}
\end{prop}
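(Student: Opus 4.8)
The plan is to combine the existence of controlled admissible reparametrizations (Proposition~\ref{yomdin}) with the entropy bound on covering numbers along fibers (Corollary~\ref{katok2}), being careful that all quantities in Proposition~\ref{yomdin} depend on the base point $x\in M$. First I would fix $r$, $Q$, $\rho$, $\gamma>0$ and set $\varepsilon_Y=\varepsilon_Y(r,Q)$ from Proposition~\ref{yomdin}. Applying Proposition~\ref{yomdin} to a curve $\sigma_x$ of $C^r$-size $<(\varepsilon,\widehat\varepsilon)$ over the set $J=\widehat\sigma_x^{-1}(\widehat U_1)$, for a suitable $\widehat U_1$ to be chosen, produces a $(C^r,G,L,\varepsilon,\widehat\varepsilon)$-admissible family $\mathcal R$ with
$$
|\mathcal R|\le C(r,F)\cdot r^{\widehat F}_{f(x)}\bigl(n,\varepsilon,\widehat\varepsilon,\pi_2(\widehat F\circ\widehat\sigma_x(J))\bigr),
$$
where $C(r,F)=\Upsilon(r)^{\lceil n/L\rceil}\|D\widehat F^L\|_{\sup}^{\lfloor n/L\rfloor/(r-1)}\|D\widehat F^{n-L\lfloor n/L\rfloor}\|_{\sup}^{1/(r-1)}$. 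I would absorb the subexponential pieces: $\Upsilon(r)^{\lceil n/L\rceil}\le e^{n(\log\Upsilon(r))/L}\le e^{n\rho/3}$ provided $L\ge L_1$ is chosen large (depending on $\Upsilon(r)$ and $\rho$), and $\|D\widehat F^L\|_{\sup}^{\lfloor n/L\rfloor/(r-1)}\le e^{n(\lambda(\widehat F)+\rho/3)/(r-1)}$ once $L$ is large enough that $\tfrac1L\log\|D\widehat F^L\|_{\sup}$ is within $\rho/3$ of $\lambda(\widehat F)$ (again enlarging $L_1$ if necessary); the remaining factor $\|D\widehat F^{n-L\lfloor n/L\rfloor}\|_{\sup}^{1/(r-1)}\le\|D\widehat F\|_{\sup}^{L/(r-1)}$ is a constant, hence $\le e^{n\rho/9}$ for $n$ large. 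Thus $C(r,F)\le\exp\bigl[n\bigl(\tfrac{\lambda(\widehat F)}{r-1}+\tfrac{2\rho}{3}\bigr)\bigr]$ for $L\ge L_1$, $n\ge n_1'$.

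The remaining task is to bound the covering number $r^{\widehat F}_{f(x)}(n,\varepsilon,\widehat\varepsilon,\pi_2(\widehat F\circ\widehat\sigma_x(J)))$ by $e^{n(\bar h_\mu(F\mid\pi)+\rho/3)}$ for a set $J=\widehat\sigma_x^{-1}(\widehat U_1)$ of the right form, and here I would invoke Corollary~\ref{katok2}. That corollary gives, for $\nu$-a.e.\ $x$, that $\lim_{\varepsilon\to0}\limsup_{n}\tfrac1n\log r^{\widehat F}_x(n,\varepsilon,\widehat\varepsilon,\widehat\mu_x,\gamma')\le\bar h_\mu(F\mid\pi)$ for any fixed $\gamma',\widehat\varepsilon$. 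The point of $\widehat U_1$ is to convert "covering a set of conditional measure $\ge\gamma'$" into "covering $\pi_2(\widehat F\circ\widehat\sigma_x(J))$": I would take $\widehat U_1$ open with $\widehat\mu(\widehat U_1)>1-\gamma^2$ and $\widehat\mu(\partial\widehat U_1)=0$, so that by a Fubini/disintegration argument for $\nu$-a.e.\ $x$ the conditional measure $\widehat\mu_{f(x)}$ of the relevant fiber slice of $\widehat F(\widehat U_1)$ exceeds $\gamma$ (using $\widehat F$-invariance of $\widehat\mu$ and Chebyshev on the set of bad fibers); then the image $\pi_2(\widehat F\circ\widehat\sigma_x(J))$ being contained in that slice, a covering achieving the $(n,\varepsilon,\widehat\varepsilon,\widehat\mu_{f(x)},\gamma)$-covering number covers it. This makes $r^{\widehat F}_{f(x)}(n,\varepsilon,\widehat\varepsilon,\pi_2(\widehat F\circ\widehat\sigma_x(J)))\le r^{\widehat F}_{f(x)}(n,\varepsilon,\widehat\varepsilon,\widehat\mu_{f(x)},\gamma)$, and then Corollary~\ref{katok2} (applied at the point $f(x)$, which is $\nu$-generic since $f_*\nu=\nu$) furnishes $\varepsilon_1>0$ and $n_1\ge n_1'$ such that for $0<\varepsilon<\varepsilon_1$ and $n\ge n_1$ this is $\le e^{n(\bar h_\mu(F\mid\pi)+\rho/3)}$. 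The neighborhood $\mathcal U_1$ of $F$ enters only to guarantee $Q_{r,L}(G)<Q$ is an open condition and that the covering numbers degrade continuously — one can shrink $\mathcal U_1$ so that Bowen balls for $G$ are comparable to those for $F$, or simply note the statement only asserts existence for $G$ satisfying $Q_{r,L}(G)<Q$ and apply Proposition~\ref{yomdin} directly to $G$, choosing $\mathcal U_1$ small enough that the estimates on $\|D\widehat G^L\|_{\sup}$ versus $\|D\widehat F^L\|_{\sup}$ and on covering numbers remain within the $\rho$-budget. Combining the two exponential bounds and choosing $\rho/3+\rho/3+\rho/3=\rho$ gives $|\mathcal R|\le\exp[n(\bar h_\mu(F\mid\pi)+\tfrac{\lambda(\widehat F)}{r-1}+\rho)]$.

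The main obstacle I anticipate is the dependence of every parameter in Proposition~\ref{yomdin} and Corollary~\ref{katok2} on the base point $x$: unlike in the surface case of \cite{BCS}, one cannot choose uniform $\varepsilon_1,n_1$ working for all $x$ simultaneously from a single application, so the argument must be run at the $\nu$-generic point $f(x)$ and one must check that the "good" set of base points has full $\nu$-measure and is $f$-invariant enough for the curve $\sigma_x$ emanating from a generic $x$ to have its image $\widehat F\circ\widehat\sigma_x$ land over a generic point. Handling the $\limsup$ over $n$ in Corollary~\ref{katok2} uniformly — extracting honest thresholds $n_1$ rather than just asymptotic bounds — requires an Egorov-type argument on the set of base points, and matching the boundary condition $\widehat\mu(\partial\widehat U_1)=0$ with the disintegration estimate (so that conditional measures of fiber slices behave well) is the delicate measure-theoretic point. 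Everything else is bookkeeping of exponential rates, which is routine once $L_1$ is taken large relative to $\log\Upsilon(r)$ and the defect $\tfrac1L\log\|D\widehat F^L\|_{\sup}-\lambda(\widehat F)$.
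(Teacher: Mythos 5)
Your plan correctly identifies the overall structure of the argument: apply Proposition~\ref{yomdin} to produce a family $\mathcal R$ with cardinality controlled by $C(r,F)\cdot r^{\widehat F}_{f(x)}(\cdot)$, absorb the subexponential factors in $C(r,F)$ by taking $L$ large enough that $\tfrac1L\log\Upsilon(r)$ and $\tfrac1L\log\|D\widehat F^L\|_{\sup}-\lambda(\widehat F)$ lie within the $\rho$-budget, and then bound the residual covering number via Corollary~\ref{katok2}, using an Egorov-type extraction to make the $\varepsilon$- and $n$-thresholds from that corollary uniform over a base set of $\nu$-measure close to $1$. This is exactly the shape of the paper's proof, and your rate bookkeeping matches the paper's inequalities~(\ref{Y})--(\ref{ref2}).

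There is, however, a genuine gap in the central step --- the construction of $\widehat U_1$ and the passage from a measure estimate to a covering estimate. You propose to take an arbitrary open $\widehat U_1$ with $\widehat\mu(\widehat U_1)>1-\gamma^2$, use Chebyshev/Fubini to guarantee the relevant fiber slice of $\widehat F(\widehat U_1)$ has conditional measure $>\gamma$, and then assert that since $\pi_2(\widehat F\circ\widehat\sigma_x(J))$ lies in that slice, ``a covering achieving the $(n,\varepsilon,\widehat\varepsilon,\widehat\mu_{f(x)},\gamma)$-covering number covers it.'' That inference fails: $r^{\widehat F}_{f(x)}(n,\varepsilon,\widehat\varepsilon,\widehat\mu_{f(x)},\gamma)$ is by definition the \emph{infimum} of covering numbers over all sets of conditional measure $\geq\gamma$, so the optimal covering covers \emph{some} such set, not the particular slice of $\widehat U_1$, which a priori may have a much larger covering number; nor does a curve (a $\widehat\mu_{f(x)}$-null set) inherit a covering from the fact that the ambient slice has large measure. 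The paper reverses the order: for each $x$ in a good set $K'_\gamma$, Corollary~\ref{katok2} first produces a set $\widehat Z_x$ with $\widehat\mu_x(\widehat Z_x)>1-\gamma^2$ covered by at most $\exp(n(\bar h_\mu(F\mid\pi)+\rho/4))$ Bowen balls $\mathcal C_x$; by inner regularity one takes a compact $\widehat K_x\subset\widehat Z_x$ of comparable measure, and then an open neighborhood $\widehat U_{1,x}\supset\widehat K_x$ \emph{still contained in $\bigcup\mathcal C_x$}. It is this topological inclusion $\widehat U_{1,x}\subset\bigcup\mathcal C_x$ --- not a measure estimate --- that propagates the covering bound to any curve passing through $\widehat U_1$. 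Finally, the paper uses compactness of $K'_\gamma$ and continuity of $x\mapsto\widehat\mu_x$ to assemble a single global open $\widehat U_1$ from finitely many $B(x_i,\delta_i)\times\widehat U_{1,x_i}$ and to extend the estimate to a neighborhood $\mathcal U_1$ of $F$; this finite-cover construction is also what makes the conclusion hold for \emph{every} $x\in M$ (for $x$ outside $\bigcup_iB(x_i,\delta_i)$ the curve misses $\widehat U_1$ and the bound is vacuous), a point your ``run the argument at a $\nu$-generic $f(x)$'' framing leaves unresolved.
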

\begin{proof}
We consider a compact set $K_{\gamma}\subset M$ such that the disintegration $x\mapsto \widehat{\mu}_x$ is continuous on $K_{\gamma}$, $\nu(K_{\gamma})>1-\gamma^2$ and Corollary \ref{katok2} holds for every $x\in K_{\gamma}$.

Let $\Upsilon(r)$ and $\varepsilon_Y$ be given by Proposition \ref{yomdin} and $L_1>0$ such that for every $L\geq L_1$, \begin{equation}\label{Y}\Upsilon(r)<\exp \left(\frac{\rho L}{10}\right),\end{equation} and $$\frac{1}{L}\log\|D\widehat{F}^L\|_{\sup}< \lambda(\widehat{F})+ \frac{\rho}{10}.$$ Fix $\widehat{\varepsilon}$ in $\left(0, \varepsilon_Y\right)$ and $L\geq L_1$. 

By Corollary \ref{katok2}, for every $x\in K_{\gamma}$, there exists $0<\bar{\varepsilon}(x)<\varepsilon_Y$ and $\bar{n}(x)>L$ such that for any $\varepsilon< \bar{\varepsilon}(x)$ and $n \geq \bar{n}(x)$,
\begin{equation}\label{E18}
\frac{1}{n} \log r^{\widehat{F}}_x\left(n, \frac{\varepsilon}{4}, \frac{\widehat{\varepsilon}}{4}, \widehat{\mu}_x, 1-\gamma^2\right)<\bar{h}_{\mu}(F\mid \pi)+\frac{\rho}{4}.
\end{equation}

Moreover, there exist $0<\varepsilon_1<\varepsilon_Y$, $n_1> L$ and a compact set $K'_{\gamma}\subset K_{\gamma}$ with $\nu(K'_{\gamma})>1-\gamma^2$ such that for every $x\in K'_{\gamma}$, $\bar{\varepsilon}(x)\geq \varepsilon_1$ and $\bar{n}_1(x)\leq n_1$. We can assume that $n_1$ was chosen such that for every $n\geq n_1$, \begin{equation}\label{ref1} \log \|D\widehat{F}^L\|_{\sup}<\frac{n\rho}{10}.
\end{equation}

Fix $\varepsilon<\varepsilon_1$ and $n\geq n_1$. By Equation (\ref{E18}), for every $x\in K'_{\gamma}$, there exists a set $\widehat{Z}_x \subset \widehat{N}$ with $\widehat{\mu}_x(\widehat{Z}_x) > 1 - \gamma^2$ such that
\begin{equation}\label{setZ}
r^{\widehat{F}}_x\left(n, \frac{\varepsilon}{4}, \frac{\widehat{\varepsilon}}{4}, \widehat{Z}_x\right) < \exp\left(n \, \bar{h}(F \mid \pi) + \frac{\rho}{4} \right).
\end{equation}

By the regularity of $\widehat{\mu}_x$, there exists a compact set $\widehat{K}_x\subset \widehat{Z}_x$ such that $\widehat{\mu}_x(\widehat{K}_x) > 1 - \gamma^2$ and $\widehat{K}_x$ also satisfies Equation (\ref{setZ}). 

Therefore, for every $x\in K'_{\gamma}$, there is a neighborhood $\widehat{U}_{1,x}$ of $\widehat{K}_x$, such that $\widehat{U}_{1,x}$ is contained in the union of a collection $\mathcal{C}_x$ of $x$-fibered $\left(n, \frac{\varepsilon}{4}, \frac{\widehat{\varepsilon}}{4}\right)$-Bowen balls for $\widehat{F}$ with cardinality at $\operatorname{most} \exp \left(n\left(\bar{h}(F\mid \pi)+\frac{\rho}{4}\right)\right)$. 

Since $x\mapsto \widehat{\mu}_x$ is continuous on $K'_{\gamma}$ and $K'_{\gamma}$ is compact, there exists $x_1, \cdots, x_l\in K'_{\gamma}$ and $\delta_i>0$ such that $K'_{\gamma}\subset \cup_{i=1}^l B(x_i, \delta_i)$ and if $y\in B(x_i,\delta_i)$, then $\widehat{\mu}_y(\widehat{U}_{1,x_i}) > 1 - \gamma^2$ and $\widehat{U}_{1,x_i}$ is contained in the union of a collection $\mathcal{C}_y$ of $y$-fibered $\left(n, \frac{\varepsilon}{2}, \frac{\widehat{\varepsilon}}{2}\right)$-Bowen balls for $\widehat{F}$ with cardinality at $\operatorname{most} \exp \left(n\left(\bar{h}(F\mid \pi)+\frac{\rho}{2}\right)\right)$. 

We define the following subset 
\begin{equation*}
\begin{aligned}
\widehat{U}_1=B(x_1,\delta_1)\times \widehat{U}_{1,x_1}\cup & \left(B(x_2,\delta_2)\setminus \overline{B(x_1, \delta_1)}\right)\times \widehat{U}_{1,x_2}\cdots\\
&\cup \left(B(x_l,\delta_l) \setminus \cap_{i=1}^{l-1}\overline{B(x_i, \delta_i)}\right)\times \widehat{U}_{1,x_l}.
\end{aligned}
\end{equation*}

Observe that $\widehat{U}_1$ is an open subset of $M\times \widehat{N}$ and we can suppose that $\widehat{\mu}\left(\partial \widehat{U}_{1}\right)=0$ and $\widehat{\mu}(\widehat{U}_1)>1-\gamma^2$.

For every $x\in M$ and $\sigma_x$ regular curve, we know that $$\widehat{F}(\sigma_x([0,1])\cap \widehat{U}_1)\subset \{f(x)\}\times g_x(\widehat{U}_{1,x_i}), \quad \text{for some } i=1,...,l.$$ This implies that there exists $\mathcal{U}_1$, a neighborhood of $F$ in the space of skew products, such that for every $G\in \mathcal{U}_1$, $\widehat{G}(\sigma_x([0,1])\cap \widehat{U}_1)$ can be covered using fibered $\left(n, \varepsilon,\widehat{\varepsilon}\right)$-Bowen balls for $\widehat{G}$ with cardinality at $\operatorname{most} \exp \left(n\left(\bar{h}(F\mid \pi)+\frac{\rho}{4}\right)\right)$. 

Moreover, we suppose that for every $G\in \mathcal{U}_1$, \begin{equation}\label{ref2}  \|D\widehat{G}^L\|_{\sup}\leq e^{\frac{nL}{10}}\|D\widehat{F}^L\|_{\sup}\quad \text{and} \quad \|D\widehat{G}\|_{\sup}\leq e^{\frac{\rho}{10}}\|D\widehat{F}\|_{\sup}
\end{equation}

If $G\in \mathcal{U}_1$ we conclude,
\begin{equation}\label{rG}
r_{g(x)}^{\widehat{G}}\left(n, \varepsilon, \widehat{\varepsilon}, \widehat{g}_x^G\left(\widehat{\sigma}[0,1] \cap \widehat{U}_{1}\right)\right) \leq \exp \left(n\left(\bar{h}(F\mid \pi)+\frac{\rho}{4}\right)\right) .
\end{equation}

Since $\varepsilon, \widehat{\varepsilon}<\varepsilon_Y$, if $Q_L(G)<Q$, we can apply Corollary \ref{yomdin} to obtain a family of reparametrizations which are admissible up to time $n$ and satisfy,
$$
|\mathcal{R}| \leq C(r,G)\; r_{g(x)}^{\widehat{G}}\left(n, \varepsilon, \widehat{\varepsilon}, \widehat{g}_x^G \left(\widehat{\sigma}[0,1] \cap \widehat{U}_{1}\right)\right).
$$

Using (\ref{Y}), (\ref{ref1}), (\ref{ref2}) and (\ref{rG}),

$$|\mathcal{R}|\leq\exp \left[n\left(\bar{h}(F\mid \pi)+\frac{\lambda(\widehat{F})}{r-1}+\rho\right)\right] .$$ 

\end{proof}

\section{Proof of Theorem \ref{thm-main}}

In this section we conclude the proof of the main theorem.

\subsection*{Proof of Theorem \ref{thm-main}} Let $(F_k, \mu_k)$ and $(F,\mu)$ satisfy the hypotheses of the theorem. That is,
\begin{enumerate}[label = -]
    \item the limits $\lim_k \lambda_1^c\left(F_k, \mu_k\right)$ and $\lim _k h_{\mu_k}\left(F_k\mid \pi\right)$ exist and are positive,
    \item $F_k$ converges to a skew product $F$,
    \item ${\mu}_k \xrightarrow{w^*} {\mu}$ for some ${F}$-invariant measure ${\mu}$.
\end{enumerate}

For $k$ big enough $h_{\mu_k}\left(F_k\mid \pi\right)>0$, then applying Theorem \ref{T2}, we can suppose that $\mu_k$ are of saddle type. 

Recall that when the measure $\mu_k$ is of saddle type, we can consider a partition $\eta^k$ subordinated to the fiber-wise unstable manifolds and a system of conditional measures $(\mu_{k,x})_{y}^{\eta^k_{x}}$ as in Section 5.1.
By Proposition \ref{expo}, there exists $\beta \in(0,1]$ and two $F$-invariant measures $\mu_0$ and $\mu_1$ such that $\mu=(1-\beta)\mu_0+\beta \mu_1$.


\begin{prop}\label{final} Let $r\leq 2 <\infty$. For any $\gamma > 0$, $\rho > 0$, $\varepsilon>0$ and $\widehat{\varepsilon}>0$ sufficiently small and $L$ big enough, there exists $k_* > 0$ such that for all $k \geq k_*$, there exist:
\begin{itemize}
    \item a set $A_k\subset M\times N$ such that $\mu_k(A_k)>0$ and
    \item an integer $N_k \in \mathbb{N}$ with the following property: 
\end{itemize}  

For every $(x_k,y_k) \in A_k$, the set $\widetilde{W}_k(x_k, y_k)$ and the measure $(\mu_{k,x_k})_{y_k}^{\eta^k_{x_k}}$ are well-defined. Moreover, for any $C^{\infty}$ regular curve $\sigma_k: [0,1] \to \widetilde{W}_k^u(x_k, y_k)$ satisfying $\|\sigma_k\|_{C^{\infty}} < (\varepsilon, \widehat{\varepsilon})$, any measurable set $J_k \subset [0,1]$ for which
$$
(\mu_{k,x_k})_{y_k}^{\eta^k_{x_k}}(\sigma_k(J_k) \cap A_k) > 0,
$$
and for all $n \geq N_k$, there exists a family of reparametrizations $\mathcal{R}_n^k$ of $\sigma_k$ over $J_k$ which is $(C^{r}, F_k, L, \varepsilon, \widehat{\varepsilon})$-admissible up to time $n$ and whose cardinality satisfies
$$
|\mathcal{R}_n^k| \leq \exp \left[n\left(\beta h_{\mu_1}(F \mid \pi) + \frac{\lambda(\widehat{F})}{r-1}+ O(\rho) + O(\gamma) \right) \right].
$$

\end{prop}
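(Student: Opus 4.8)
The plan is to adapt the proof of the corresponding proposition in \cite{BCS}: decompose the orbit segment $[0,n)$ of a $\mu_k$-generic point into maximal \emph{neutral blocks}, along which the fiber-unstable curve barely expands, and the complementary \emph{expanding times}, whose total length turns out to be $\beta n+O(\gamma n)$; build $\mathcal{R}_n^k$ recursively over this decomposition, using a cheap Yomdin reparametrization on the neutral blocks and Proposition~\ref{5.1} (applied to $\mu_1$) on the expanding part; and finally multiply the local cardinality bounds. Concretely, I would begin from the set-up of Proposition~\ref{expo}: after passing to a subsequence, $\widehat{\mu}_k^{+}\xrightarrow{w^*}\widehat{\mu}=\widehat{m}_0+\widehat{m}_1$ with $\beta=\widehat{m}_1(M\times\widehat{N})$, $\widehat{\mu}_i=\widehat{m}_i/\widehat{m}_i(M\times\widehat{N})$, and $\mu_i=(\operatorname{Id},\widehat{\pi}_{\widehat{N}})_*\widehat{\mu}_i$; applying Proposition~\ref{P1} to $\widehat{F}_k\to\widehat{F}$, the ergodic measures $\widehat{\mu}_k^{+}$ and the functions $\widehat{\varphi}_k\to\widehat{\varphi}$ returns exactly $\widehat{m}_0$ and $\widehat{m}_1$. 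Then I would fix neighbourhoods $V_0\ni\widehat{m}_0$, $V_1\ni\widehat{m}_1$ small enough that the associated normalized empirical measures stay $O(\gamma)$-close to $\widehat{\mu}_0$ and $\widehat{\mu}_1$, fix $\alpha\in(0,\alpha_*(V))$ with $\alpha\le\rho$, and take $L\ge\max\{L_*(V),L_1\}$, where $L_1$ (together with $\mathcal{U}_1,\widehat{U}_1,\varepsilon_1,n_1$) is furnished by Proposition~\ref{5.1} applied to $\mu_1$; since $\mu_1$ need not be ergodic one applies Proposition~\ref{5.1} to finitely many of its ergodic components and integrates over the ergodic decomposition, which replaces the essential supremum of the fiber entropy by the average $h_{\mu_1}(F\mid\pi)$. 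Finally $\varepsilon,\widehat{\varepsilon}$ are taken below all the Yomdin thresholds.

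For the set $A_k$ I would take those $(x,y)$ for which the Oseledets data, the fiber-unstable manifolds $\widetilde{W}_k^u(x,y)$ and the conditionals $(\mu_{k,x})_y^{\eta^k_x}$ are well defined, such that the lifted point $(x,y,E_1(x,y))$ is $\widehat{\mu}_k^{+}$-generic, satisfies item (iii) of Proposition~\ref{P1} for the chosen $\alpha,L$, and whose orbit visits the compact good sets of Proposition~\ref{5.1} along the expanding times with the prescribed frequencies. Since these are $\widehat{\mu}_k^{+}$-full conditions and $\widehat{\mu}_k^{+}$ projects to $\mu_k$, Birkhoff's theorem and fiberwise disintegration give $\mu_k(A_k)>0$ and positive $(\mu_{k,x})_y^{\eta^k_x}$-measure of $A_k$ on a positive-measure set of fibers, provided $k\ge k_*$ is large enough that $F_k\in\mathcal{U}_1$ and $Q_{r,L}(F_k)<Q$; I would then take $N_k\ge n_1$ so large that the frequency and empirical-measure estimates along $[0,n)$ hold for all $n\ge N_k$.

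Now fix $(x_k,y_k)\in A_k$, a curve $\sigma_k$ of $C^\infty$-size $<(\varepsilon,\widehat{\varepsilon})$ in $\widetilde{W}_k^u(x_k,y_k)$, a set $J_k$ with $(\mu_{k,x_k})_{y_k}^{\eta^k_{x_k}}(\sigma_k(J_k)\cap A_k)>0$, and $n\ge N_k$. I would cut $[0,n)$ at the endpoints of the maximal neutral blocks and of the (suitably grouped) expanding intervals, refine everything into steps of length $\le L$, and build $\mathcal{R}_n^k$ recursively, keeping every current piece of $C^r$-size $<(\varepsilon,\widehat{\varepsilon})$. On a neutral step of length $m\le L$ the defining inequality of a neutral block forces the Birkhoff sum of $\widehat{\varphi}_k$ along it to be $\le\alpha m$, so the fiber-unstable curve stretches by at most $e^{\alpha m}$ (up to a bounded distortion controlled by $\varepsilon$), and Proposition~\ref{yomdin} and Lemma~\ref{L1} refresh its $C^r$-size with at most $\exp\!\big[m\big(\alpha+\tfrac{\lambda(\widehat{F})}{r-1}+\tfrac{\rho}{10}\big)\big]$ reparametrizations; on an expanding interval one is in the situation of Proposition~\ref{5.1} with $G=F_k\in\mathcal{U}_1$, and since the expanding-times empirical measures lie in $V_1$, all but an $O(\gamma)$-fraction of those times land in $\widehat{U}_1$, so Proposition~\ref{5.1} yields an admissible family over the part of the domain mapping into $\widehat{U}_1$ of cardinality at most $\exp\!\big[(\mathrm{length})\big(h_{\mu_1}(F\mid\pi)+\tfrac{\lambda(\widehat{F})}{r-1}+\rho\big)\big]$, the $O(\gamma n)$ leftover expanding times being cut trivially at cost $\exp[O(\gamma)\,n]$. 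Concatenating produces a family over $J_k$ that is $(C^r,F_k,L,\varepsilon,\widehat{\varepsilon})$-admissible up to time $n$; multiplying the local bounds, using $\alpha\le\rho$, that the neutral steps have total length $(1-\beta)n+O(\gamma n)$ and the expanding ones $\beta n+O(\gamma n)$, and that the coefficient of $\tfrac{\lambda(\widehat{F})}{r-1}$ adds up to $1$ up to an $O(\gamma)$ correction (for this fixed $r$), gives $|\mathcal{R}_n^k|\le\exp\!\big[n\big(\beta\,h_{\mu_1}(F\mid\pi)+\tfrac{\lambda(\widehat{F})}{r-1}+O(\rho)+O(\gamma)\big)\big]$.

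The hard part is the same as in \cite{BCS}. Item (iii) of Proposition~\ref{P1} controls the neutral/expanding structure of individual typical orbits, whereas here an entire fiber-unstable curve must be reparametrized, and its points have a priori different neutral structures; one has to organize the recursive cutting so that every surviving piece tracks an orbit that is $\mu_k$-typical in the relevant sense and whose visits to $\widehat{U}_1$ occur with the stated frequency (and, since Proposition~\ref{5.1} requires the time to exceed $n_1$, so that the expanding intervals get grouped into long enough runs), and then control the accumulation of all the approximation errors---weak-$*$ closeness of $(F_k,\mu_k)$ to $(F,\mu)$, closeness of the two families of empirical measures to $\widehat{m}_0$ and $\widehat{m}_1$, and the boundary effects $\widehat{\mu}(\partial\widehat{U}_1)=0$---so that they collapse to $O(\rho)+O(\gamma)$ uniformly in $k\ge k_*$ and $n\ge N_k$. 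Making rigorous the passage from $\bar{h}_{\mu_1}(F\mid\pi)$ to $h_{\mu_1}(F\mid\pi)$ through finitely many ergodic components of $\mu_1$ is the remaining point of care.
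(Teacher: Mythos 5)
Your proposal is correct and follows essentially the same route as the paper, which itself defers almost entirely to Section 7.4 of \cite{BCS}: you correctly identify the three load-bearing ingredients the authors cite — item (iii) of Proposition~\ref{P1} to organize the orbit segment $[0,n)$ into neutral blocks and expanding times whose empirical measures cluster near $\widehat{m}_0$ and $\widehat{m}_1$, Proposition~\ref{yomdin} and Lemma~\ref{L1} to refresh $C^r$-size step by step, and Proposition~\ref{5.1} to bound the reparametrization count on the expanding part by the fiber entropy — and you also correctly flag the passage from the essential supremum $\bar{h}_{\mu_1}(F\mid\pi)$ to the average $h_{\mu_1}(F\mid\pi)$ via the ergodic decomposition, which is precisely the adaptation of Proposition~5.3 of \cite{BCS} that the paper mentions as the remaining modification.
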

\begin{proof}
    The proof follows the same arguments than Section 7.4 of \cite{BCS}. The main results needed to conclude this proposition are item (iii) of Proposition \ref{P1}, Corollary \ref{yomdin} and Proposition \ref{5.1}. We also need to adapt Proposition 5.3 of \cite{BCS}, but it follows a reasoning analogous to Proposition \ref{5.1}. 
\end{proof}

For $k$ big enough, let $A_k$ be given by the Proposition above. Then, there exists $(x_k, y_k)\in A_k$ such that 
\begin{itemize}
   \item $(x_k,y_k)$ satisfies Theorem \ref{entropy}, 
   \item $y_k$ belongs to the support of the restriction of $(\mu_{k,x_k})_{y_k}^{\eta^k_{x_k}}$ to $A_k$.
\end{itemize}

Let $\sigma_k\colon [0,1]\to \widetilde{W}_k(x_k, y_k)$ be a $C^{\infty}$ regular curve that parametrizes a neighborhood of $y_k$ and has $C^{\infty}$ size smaller that $(\varepsilon, \widehat{\varepsilon})$. Considering $J_k=\sigma_k^{-1}(A_k)$ and applying Theorem \ref{entropy} and Proposition \ref{final} we conclude, $$h_{\mu_k}(F_k \mid \pi)\leq  \beta h_{\mu_1}(F \mid \pi) + \frac{\lambda(\widehat{F})}{r-1}+O(\rho) + O(\gamma).$$ Passing to the limits $k\to \infty$ and then $\gamma\to 0$ and $\rho\to 0$, we conclude, $$\lim_{k\to \infty} h_{\mu_k}(F_k \mid \pi)\leq  \beta h_{\mu_1}(F \mid \pi) + \frac{\lambda(\widehat{F})}{r-1}.$$ Observe that since the map $F$ acts $C^{\infty}$ on the fibers, the inequality holds for arbitrary large $r$. Therefore, the second term goes to zero and the first part of the theorem follows.

 Now let us assume that $\lambda_2(F,x,y)\leq0$ for $\mu$-a.e. $(x,y)$.
 As $\beta>0$, $\mu_1$-a.e. $(x,y)$ has $\lambda_2(F,x,y)\leq0$, so by Proposition~\ref{expo}
\begin{gather*}
\lim _{k \rightarrow \infty} \lambda_1^c\left(F_k, \mu_k\right)=\beta\, \lambda_1^c(F, \mu_1).
\end{gather*}

\qed

\section{Proof of Theorems \ref{thm-anosov} and \ref{thm-smooth-2}}


In order to conclude the theorems, we will use a result from \cite{BCS-SPR} that relates SPR to a condition on the Lyapunov exponents. Recall Definition \ref{spr}. Let $\tilde{f}:\tilde{M}\to \tilde{M}$ be a diffeomorphism on a compact $d$ dimensional manifold and $\mu$ be a invariant measure. Let $\lambda_1(\tilde{f},x)\geq \lambda_2(\tilde{f},x)\geq \dots \geq \lambda_d(\tilde{f}, x)$ be the Lyapunov exponents of $\tilde{f}$ and $\mu$.  Define $\Lambda^+(\mu)=\int \sum_{\lambda_i>0} \lambda_i(\tilde{f},x)d\mu$. 

We recall two conditions on $\tilde{f}$,
\begin{itemize}
\item[(EH)] \textbf{Entropy Hyperbolicity:} there exists $\chi>0$ with the following property. For every sequence of ergodic measures $\mu_k\to \mu$, if $h_{\mu_k}(\tilde{f})\to h_{top}(\tilde{f})$ then there exists $i$ such that $\lambda_i(x)>\chi>-\chi>\lambda_{i+1}(x)$ for $\mu$-a.e. $x$.
\item[(EC)] \textbf{Entropy Continuity of $\Lambda^+$:} For every sequence of ergodic measures $\mu_k\to \mu$ such that $h_{\mu_k}(\tilde{f})\to h_{top}(\tilde{f})$, then $\Lambda^+(\mu_k)\to \Lambda^+(\mu)$.
\end{itemize}

\begin{theo}[Theorem 3.1 of \cite{BCS-SPR}]
If $\tilde{f}$ is entropy hyperbolic and $\Lambda^+$ is entropy continuous, then $\tilde{f}$ is SPR.
\end{theo}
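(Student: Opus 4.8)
The plan is to establish the Pesin-block characterisation of SPR (Definition~\ref{spr}) by contradiction, using the two hypotheses in complementary ways: (EH) will force every weak-$*$ limit of a near-maximal sequence of ergodic measures to be uniformly hyperbolic, while (EC) will keep the ``hyperbolicity budget'' $\Lambda^{+}$ under control along the sequence. Write $d=\dim\tilde M$, let $\chi_0>0$ be the exponent supplied by (EH), and let $\chi:=\chi_0/2$; we may assume $h_{\mathrm{top}}(\tilde f)>0$. I would prove that $\tilde f$ is SPR with this $\chi$.

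First I would build an exhaustion of the uniformly hyperbolic set by Pesin blocks. For $C\ge 1$ let $\Lambda_C$ be the (Borel) set of points $x$ at which Oseledets' theorem applies with a spectral gap of size $>2\chi_0$ — that is, an invariant splitting $T_xM=E^s(x)\oplus E^u(x)$ along the orbit with Lyapunov exponents $<-\chi_0$ on $E^s$ and $>\chi_0$ on $E^u$ — and for which the uniform constant in the definition of a $(\chi,\varepsilon_0)$-Pesin block may be taken equal to $C$. After passing to closed versions in the usual way, each $\Lambda_C$ is a $(\chi,\varepsilon_0)$-Pesin block and $\Lambda_C\uparrow$ in $C$ (one may allow $\dim E^s$ to vary over $\Lambda_C$, or equivalently run the argument separately for each gap index, which only affects bookkeeping). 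The standard Oseledets–Pesin tempering argument then gives that any invariant measure $\rho$ all of whose ergodic components have a spectral gap of size $\ge 2\chi_0$ satisfies $\rho\big(\bigcup_{C\ge1}\Lambda_C\big)=1$, hence $\rho(\Lambda_C)\to1$ as $C\to\infty$.

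Now assume, toward a contradiction, that the defining property of SPR fails for this $\chi$. Then there is $\varepsilon_0>0$ such that for every $(\chi,\varepsilon_0)$-Pesin block $\Lambda$, every $h_0<h_{\mathrm{top}}(\tilde f)$ and every $\tau>0$ there is an ergodic $\nu$ with $h_\nu(\tilde f)>h_0$ and $\nu(\Lambda)\le\tau$. Applying this with $\Lambda=\Lambda_k$, $h_0=h_{\mathrm{top}}(\tilde f)-1/k$ and $\tau=1/k$ yields ergodic measures $\nu_k$ with $h_{\nu_k}(\tilde f)\to h_{\mathrm{top}}(\tilde f)$ and $\nu_k(\Lambda_C)\to0$ for every fixed $C$ (since $\Lambda_C\subset\Lambda_k$ once $k\ge C$). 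By weak-$*$ compactness, extract a subsequence with $\nu_k\to\mu$ for some invariant probability measure $\mu$. By (EH) applied to this sequence there is an index $i$ with $\lambda_i(\tilde f,x)>\chi_0>-\chi_0>\lambda_{i+1}(\tilde f,x)$ for $\mu$-a.e.\ $x$; hence every ergodic component of $\mu$ has a spectral gap $\ge2\chi_0$ at index $i$, and by the previous paragraph $\mu(\Lambda_C)\to1$ as $C\to\infty$. Finally, since $h_{\nu_k}(\tilde f)\to h_{\mathrm{top}}(\tilde f)$, hypothesis (EC) gives $\Lambda^{+}(\nu_k)\to\Lambda^{+}(\mu)<\infty$.

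The decisive step — which I expect to be the main obstacle — is to show this situation is impossible: a sequence of ergodic measures that escapes every fixed Pesin block ($\nu_k(\Lambda_C)\to0$ for all $C$), has $\Lambda^{+}(\nu_k)$ bounded, and converges to a genuinely hyperbolic $\mu$, must satisfy $\limsup_k h_{\nu_k}(\tilde f)<h_{\mathrm{top}}(\tilde f)$. One estimates $h_{\nu_k}(\tilde f)$ through a Katok-type local-entropy formula, covering a set of definite $\nu_k$-measure by $(n,\varepsilon)$-Bowen balls; since $\nu_k(\Lambda_C)\to0$, a $\nu_k$-typical orbit spends a proportion $1-o_k(1)$ of its length outside $\Lambda_C$, where the tempering constants diverge, and a reparametrization/Yomdin count of Bowen balls along those orbit segments — of precisely the type developed in Sections~\ref{sec: PROJECTIVE FIBER BUNDLE}--\ref{sec: ENTROPY FORMULAS} and afterwards — bounds the exponential growth rate by a quantity which, once the total budget $\Lambda^{+}(\nu_k)$ is pinned near $\Lambda^{+}(\mu)$ and the limit is known to be hyperbolic, is strictly below $h_{\mathrm{top}}(\tilde f)$. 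This contradicts $h_{\nu_k}(\tilde f)\to h_{\mathrm{top}}(\tilde f)$ and shows $\tilde f$ is SPR. This quantitative ``no entropy escapes to infinity in the Pesin charts'' estimate is the technical core of the argument in \cite{BCS-SPR}; the hypotheses (EH) and (EC) enter exactly there, supplying respectively the hyperbolicity of the limit and the compactness of the hyperbolicity budget, without which the escaping scenario could not be ruled out.
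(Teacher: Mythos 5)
The statement you are trying to prove is not proved in this paper at all: it is quoted verbatim as Theorem~3.1 of \cite{BCS-SPR} and used as a black box, so there is no internal argument to compare yours against. Judged on its own, your proposal is not a proof but a reduction of the theorem to its own hardest part. The setup is fine as far as it goes: negating Definition~\ref{spr} for $\chi=\chi_0/2$, exhausting the hyperbolic set by Pesin blocks $\Lambda_C$, extracting ergodic $\nu_k$ with $h_{\nu_k}(\tilde f)\to h_{\mathrm{top}}(\tilde f)$ and $\nu_k(\Lambda_C)\to 0$ for every fixed $C$, passing to a weak-$*$ limit $\mu$, and invoking (EH) and (EC) to get a spectral gap for $\mu$ and convergence of $\Lambda^{+}(\nu_k)$. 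But at that point nothing contradictory has yet been derived: weak-$*$ convergence gives no control of $\nu_k(\Lambda_C)$ in terms of $\mu(\Lambda_C)$ (the blocks are merely Borel, and even for closed blocks one only gets one-sided inequalities), so hyperbolicity of $\mu$ is perfectly compatible with the $\nu_k$ escaping every fixed block. You acknowledge this and declare the ``decisive step'' to be exactly the statement that such escape forces $\limsup_k h_{\nu_k}(\tilde f)<h_{\mathrm{top}}(\tilde f)$ --- and then you do not prove it, explicitly deferring it to \cite{BCS-SPR}. Since that step \emph{is} the theorem (everything before it is soft), the proposal has a genuine gap rather than an alternative route.

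Two further cautions about the sketch of that missing step. First, the mechanism you gesture at (Katok-type covers plus a Yomdin/reparametrization count along the excursions outside $\Lambda_C$) requires high smoothness and a quantitative link between the tempering constants on the excursions and the number of Bowen balls; neither is supplied, and it is not clear that (EH) and (EC) alone feed into such a count in the way you assert --- in \cite{BCS-SPR} the role of the entropy continuity of $\Lambda^{+}$ is rather to rule out loss of expansion/defect of the Lyapunov spectrum along near-maximal sequences, from which uniform mass on a single block is extracted, not to cap an entropy estimate from above. Second, even the bookkeeping of the blocks needs care: the index $i$ of the gap provided by (EH) may depend on the limit measure $\mu$ (hence on the subsequence), so ``the'' family $\Lambda_C$ used to negate SPR must be chosen uniformly over all possible gap indices before the sequence $\nu_k$ is produced; your parenthetical remark that this ``only affects bookkeeping'' is plausible but should be written out, since the negation of SPR quantifies over a single block at a time.
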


Therefore, in order to conclude Theorems \ref{thm-anosov} and \ref{thm-smooth-2}, we need to prove that in both cases $F$ is entropy hyperbolic and $\Lambda^+$ is entropy continuous.

We first prove the following proposition.
\begin{prop}\label{prop-beta-1}
Let $F$ be a skew product over $f$ such that $h_{top}(F)>h_{top}(f)$ and $\nu\mapsto h_\nu(f)$ is upper semi-continuous, then for every sequence of ergodic measures $\mu_k$ such that $\mu_k\to \mu$ and $h_{\mu_k}(F)\to h_{top}(F)$,
\begin{itemize}
\item $h_\mu(F)=h_{top}(F)$,
\item $\lambda^c_1(F,\mu_k)\to \lambda^c_1(F,\mu)>0$, and
\item $\lim_{k\to \infty}h_{\pi_*\mu_k}(f)=h_{\pi_*\mu}(f)$.
\end{itemize}
\end{prop}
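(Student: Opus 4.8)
The plan is to route everything through Theorem~\ref{thm-main}, the Abramov--Rokhlin formula $h_\mu(F)=h_{\pi_*\mu}(f)+h_\mu(F\mid\pi)$, the two Margulis--Ruelle inequalities (Theorem~\ref{T2} and its reverse), and the affineness over the ergodic decomposition of $\mu\mapsto h_\mu(F)$, of $\mu\mapsto h_{\pi_*\mu}(f)$, and hence of the fiber entropy $\mu\mapsto h_\mu(F\mid\pi)$. Since $h_{\mu_k}(F\mid\pi)$ and $\lambda_1^c(F,\mu_k)$ are bounded, I would first work along a subsequence on which both converge, establish the three conclusions with the limits equal to $h_\mu(F\mid\pi)$, $h_{\pi_*\mu}(f)$ and $\lambda_1^c(F,\mu)$, and then upgrade to the full sequence: every subsequence of $(\mu_k)$ has a further subsequence to which the argument applies and produces these same limits, which depend only on $(F,\mu)$, so the full sequences converge.

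First I would make the standard reductions. From $h_{\mu_k}(F)\to h_{top}(F)>h_{top}(f)\ge h_{\pi_*\mu_k}(f)$ and Abramov--Rokhlin, $\liminf_k h_{\mu_k}(F\mid\pi)\ge h_{top}(F)-h_{top}(f)>0$; by Theorem~\ref{T2} and its reverse, for $k$ large $\mu_k$ is of saddle type and $h_{\mu_k}(F\mid\pi)\le\lambda_1^c(F,\mu_k)$. Thus, along the chosen subsequence, $a:=\lim_k h_{\mu_k}(F\mid\pi)$ and $b:=\lim_k\lambda_1^c(F,\mu_k)$ satisfy $b\ge a\ge h_{top}(F)-h_{top}(f)>0$, so Theorem~\ref{thm-main} (applied to the constant sequence $F_k=F$) gives $\beta\in(0,1]$ and $F$-invariant measures $\mu_0,\mu_1$ with $\mu=(1-\beta)\mu_0+\beta\mu_1$, $h_{\mu_1}(F\mid\pi)>0$, and $a\le\beta h_{\mu_1}(F\mid\pi)$. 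Affineness of fiber entropy gives $h_\mu(F\mid\pi)\ge\beta h_{\mu_1}(F\mid\pi)\ge a$, while along the subsequence $h_{\pi_*\mu_k}(f)=h_{\mu_k}(F)-h_{\mu_k}(F\mid\pi)\to h_{top}(F)-a$, so upper semicontinuity of $\nu\mapsto h_\nu(f)$ yields $h_{\pi_*\mu}(f)\ge h_{top}(F)-a$. Adding, $h_\mu(F)=h_{\pi_*\mu}(f)+h_\mu(F\mid\pi)\ge h_{top}(F)$, hence $h_\mu(F)=h_{top}(F)$, the first bullet; and then $h_\mu(F\mid\pi)=h_{top}(F)-h_{\pi_*\mu}(f)\le a$, so in fact $h_\mu(F\mid\pi)=a$ and $h_{\pi_*\mu}(f)=h_{top}(F)-a$. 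Combined with $h_{\mu_k}(F)\to h_{top}(F)$, the latter already gives $h_{\pi_*\mu_k}(f)\to h_{\pi_*\mu}(f)$, the third bullet.

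The second bullet is the delicate one, and the key input is that \emph{every ergodic component of $\mu$ is itself an MME}. Writing the ergodic decomposition $\mu=\int\mu_e\,dP(e)$, affineness of $\mu\mapsto h_\mu(F)$ together with $h_{\mu_e}(F)\le h_{top}(F)$ forces $h_{\mu_e}(F)=h_{top}(F)$ for $P$-a.e.\ $e$, whence $h_{\mu_e}(F\mid\pi)=h_{top}(F)-h_{\pi_*\mu_e}(f)\ge h_{top}(F)-h_{top}(f)>0$; Theorem~\ref{T2} then forces $\lambda_1^c(F,\mu_e)>0$ and its reverse forces a negative fiber exponent, so $\lambda_1^c(F,\mu_e)>0>\lambda_2^c(F,\mu_e)$ for $P$-a.e.\ $e$. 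In particular $\lambda_2(F,x,y)\le 0$ for $\mu$-a.e.\ $(x,y)$ and $\lambda_1^c(F,\mu)=\int\lambda_1^c(F,\mu_e)\,dP>0$. I would then rule out $\beta<1$: in that case $a=h_\mu(F\mid\pi)=(1-\beta)h_{\mu_0}(F\mid\pi)+\beta h_{\mu_1}(F\mid\pi)\le\beta h_{\mu_1}(F\mid\pi)$ forces $h_{\mu_0}(F\mid\pi)=0$, but $\mu_0\ll\mu$, so its Radon--Nikodym density is $F$-invariant and its ergodic components are, up to $P$-null sets, ergodic components of $\mu$, giving $h_{\mu_0}(F\mid\pi)\ge h_{top}(F)-h_{top}(f)>0$, a contradiction. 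Hence $\beta=1$ and $\mu_1=\mu$, and since $\lambda_2(F,x,y)\le 0$ $\mu$-a.e., the final assertion of Theorem~\ref{thm-main} gives $\lim_k\lambda_1^c(F,\mu_k)=\beta\lambda_1^c(F,\mu_1)=\lambda_1^c(F,\mu)>0$. Passing from the subsequence to the full sequence as above finishes the proof.

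I expect the genuine difficulty to be exactly the two facts that $\lambda_2\le 0$ $\mu$-a.e.\ and that $\beta=1$: these are what allow one to invoke the sharp ``moreover'' conclusion of Theorem~\ref{thm-main} and to discard the auxiliary measure $\mu_0$, and both hinge on the observation that, because $h_{top}(f)<h_{top}(F)$, maximality of the entropy is inherited by the ergodic components, after which the two Margulis--Ruelle inequalities pin down the signs of the fiber exponents.
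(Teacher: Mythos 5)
Your proof is correct and follows essentially the same route as the paper: apply Theorem~\ref{thm-main} to the constant sequence $F_k=F$, use Abramov--Rokhlin and upper semicontinuity of $\nu\mapsto h_\nu(f)$ to force $h_\mu(F)=h_{top}(F)$, deduce that every ergodic component of $\mu$ is an MME with saddle-type fiber exponents, conclude $\lambda_2\le 0$ $\mu$-a.e., and invoke the ``moreover'' part of Theorem~\ref{thm-main}. The one place where you are actually more careful than the paper is the step $\beta=1$: the paper writes that the identity $\beta h_{\mu_1}(F\mid\pi)+(1-\beta)h_{\mu_0}(F\mid\pi)=h_\mu(F\mid\pi)$ together with $h_{top}(F)\le h_{\pi_*\mu}(f)+\beta h_{\mu_1}(F\mid\pi)$ ``implies $\beta=1$'', but strictly speaking this only yields $(1-\beta)h_{\mu_0}(F\mid\pi)=0$, and one still has to rule out $h_{\mu_0}(F\mid\pi)=0$; your observation that $\mu_0\ll\mu$ (when $\beta<1$), so that $\mu_0$'s ergodic components are themselves MMEs with positive fiber entropy, supplies precisely that missing justification. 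Your direct derivation of the third bullet (from $h_{\pi_*\mu_k}(f)=h_{\mu_k}(F)-h_{\mu_k}(F\mid\pi)\to h_{top}(F)-a=h_{\pi_*\mu}(f)$) is also a little cleaner than the paper's argument by contradiction, but the content is the same.
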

\begin{proof}
Since $h_{top}(F)>h_{top}(f)$ for every ergodic measure $\nu$ with large entropy $h_{\nu}(F\mid\pi)>0$ and then, by Theorem~\ref{T2}, $\lambda^c_1(F,\nu)>0$.

By Theorem~\ref{thm-ab-rok} we know $$
h_{\mu_k}(F)=h_{\pi_*\mu_k}(f)+h_{\mu_k}(F\mid\pi).
$$

Taking $k\to \infty$, by Theorem~\ref{thm-main} and the semi-continuity of the metric entropy of $f$, we conclude that there exists $0<\beta\leq 1$, $\mu_0$ and $\mu_1$ such that $\mu=(1-\beta)\mu_0+\beta \mu_1$, and $$
h_{top}(F)\leq h_{\pi_* \mu}(f)+\beta h_{\mu_1}(F\mid \pi).
$$

Since $\beta h_{\mu_1}(F\mid\pi)+(1-\beta) h_{\mu_0}(F\mid\pi)=h_{\mu}(F\mid \pi)$, the equation above implies that $\beta=1$. We conclude that $h_\mu(F)=h_{top}(F)$. In particular, every ergodic component of $\mu$ is a measure of maximal entropy. Applying the observation at the beginning of the proof to the ergodic components of $\mu$, we conclude that $\lambda_2(F,x,y)\leq0$ $\mu$-almost everywhere. Therefore, again by Theorem \ref{thm-main}, the continuity of the Lyapunov exponents follows. 


Now assume by contradiction that $\lim_{k\to \infty}h_{\pi_*\mu_k}(f)$ is not $h_{\pi_*\mu}(f)$, by the upper semi-continuity, $\limsup_{k\to \infty}h_{\pi_*\mu_k}(f)\leq h_{\pi_*\mu}(f)$, then there exists a sub-sequence $k_j$ such that $\lim_{j\to \infty}h_{\pi_*\mu_{k_j}}(f)$ exists and is smaller than $h_{\pi_* \mu}(f)$.

This implies that 
$$
\begin{aligned}
h_{top}(F)\leq &  \lim_{j\to \infty}h_{\pi_*\mu_{k_j}}(f)+h_{\mu}(F\mid\pi)\\
<& h_{\pi_* \mu}(f)+h_{\mu}(F\mid\pi)\\
\leq & h_{top}(F),
\end{aligned}
$$
which is a contradiction.
\end{proof}

\def\Enc{E_i^{Nc}}
\def\Ef{E_{f,i}}

Let $F\colon M\times N\to M\times N$ be a skew product $C^1$ diffeomorphism over $f$. Suppose that $M$ is a $d$ dimensional manifold. 

Given an $F$ invariant measure $\mu$, we call $\lambda^{Nc}_i(F,x,y)$ and $\Enc(x,y)$, $i=1,\dots,d$ the Lyapunov exponents counted with multiplicity and the corresponding Oseledets sub-spaces of $F$ such that $\Enc(x,y)\not\subset   T_y N_x$.

Also we denote by $\lambda_i(f,x)$ and $\Ef(x)$ the Lyapunov exponents and corresponding Oseledets sub-spaces of $f$ at $x\in M$.

\begin{lemma}\label{lema-exponents}
Let $F$ be a skew product $C^1$ diffeomorphism over $f$ and $\mu$ an $F$ invariant measure, then for every $i=1,\dots,d$
\begin{itemize}
\item $\lambda^{Nc}_i(F,x,y)=\lambda_i(f,x)$, and
\item $D\pi(x,y)\Enc(x,y)=\Ef(x)$ for $\mu$ almost every $(x,y)$.
\end{itemize}
 
\end{lemma}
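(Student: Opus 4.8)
The plan is to derive both statements from the block-triangular structure of $DF$ with respect to the splitting $T_{(x,y)}(M\times N) = (D\pi)^{-1}(0) \oplus S$, where the fiber direction $\{x\}\times T_yN$ is $DF$-invariant. Writing $DF$ in coordinates adapted to this decomposition, we get a matrix cocycle of the form
$$
DF(x,y) = \begin{pmatrix} Df(x) & 0 \\ \ast & D_yg_x \end{pmatrix},
$$
since $F(x,y) = (f(x), g_x(y))$ depends on the base coordinate through $f$ alone in its first component. The key point is that the quotient cocycle of $DF$ by the invariant fiber subbundle $TN_x := \{x\}\times T_yN$ is, via $D\pi$, conjugate to the cocycle $Df$ pulled back to $M\times N$. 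First I would make this precise: $D\pi(x,y)$ maps $T_{(x,y)}(M\times N)$ onto $T_xM$ with kernel exactly $TN_x$, and satisfies the equivariance $D\pi(F(x,y))\circ DF(x,y) = Df(x)\circ D\pi(x,y)$, so $D\pi$ descends to a bundle isomorphism between the quotient $T(M\times N)/TN$ and $\pi^*TM$ intertwining the induced cocycle with $Df$.

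Next I would invoke Oseledets' theorem for the three cocycles involved ($DF$ over $(F,\mu)$, $Df$ over $(f,\pi_*\mu)$, and the induced quotient cocycle). The Lyapunov exponents of $DF$ at $(x,y)$ are the union, with multiplicity, of the fiber exponents $\lambda^c_j(F,x,y)$ (coming from $TN_x$) and the exponents of the quotient cocycle; and since the quotient cocycle is conjugate via $D\pi$ to $Df$ at $\pi(x,y)=x$, its exponents are exactly the $\lambda_i(f,x)$, with the Oseledets subspaces of the quotient mapping isomorphically onto the $E_{f,i}(x)$. Now the subspaces $E^{Nc}_i(x,y)$ are by definition those Oseledets subspaces of $DF$ not contained in $T_yN_x$; these are precisely the ones that project isomorphically onto Oseledets subspaces of the quotient cocycle (a subspace is killed by $D\pi$ iff it lies in $TN_x$). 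Matching exponents gives $\lambda^{Nc}_i(F,x,y) = \lambda_i(f,x)$ and $D\pi(x,y)E^{Nc}_i(x,y) = E_{f,i}(x)$ for $\mu$-a.e.\ $(x,y)$, which is the claim. One should index carefully so that the enumeration $i=1,\dots,d$ on both sides matches; this is where the ordering conventions need checking.

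The one genuine subtlety — the main obstacle — is the relationship between the full list of Oseledets subspaces of $DF$ and the decomposition into the fiber part $TN$ and the complementary part $E^{Nc}$. Because $DF$ is only block-triangular (not block-diagonal), an Oseledets subspace of $DF$ with exponent $\lambda$ need not split as (something in $TN$) $\oplus$ (something complementary); rather, one should argue that for each exponent $\lambda$ the Oseledets space $E_\lambda(DF)$ has a canonical subspace $E_\lambda(DF)\cap TN$ (the fiber part) and a complement that projects isomorphically under $D\pi$, and that $E^{Nc}_i$ as defined picks out these complementary pieces. The cleanest route is: the filtration of $DF$ restricts to the fiber filtration on $TN$, and $D\pi$ sends the quotient filtration of $DF$ isomorphically and equivariantly to the Oseledets filtration of $Df$; then extract subspaces from filtrations in the usual way and observe that a $DF$-Oseledets subspace fails to lie in $TN_x$ exactly when its image in the quotient is nonzero, at which point its exponent and its $D\pi$-image are forced to agree with those of $Df$. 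The routine measure-theoretic bookkeeping (full-measure sets, invariance of all the relevant objects) I would leave implicit, citing the $F$-invariance of $TN$ and the equivariance of $D\pi$.
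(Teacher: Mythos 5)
Your proposal is correct, and it arrives at the same conclusion by a conceptually different organization than the paper. You package the argument through the quotient cocycle: since $TN:=\ker D\pi$ is $DF$-invariant and $D\pi$ satisfies $D\pi(F(x,y))\circ DF(x,y)=Df(x)\circ D\pi(x,y)$, the induced cocycle on $T(M\times N)/TN$ is conjugate to the pullback of $Df$, and you then invoke the standard fact that the Oseledets spectrum of a cocycle with an invariant subbundle is the disjoint union (with multiplicity) of the restriction spectrum and the quotient spectrum, plus a filtration-matching argument to recover the subspace statement. The paper instead works bare-hands with the full cocycle: it fixes $v\in\Enc(x,y)$ and computes $\lim\tfrac{1}{n}\log\|Df^n(x)D\pi(x,y)v\|$ directly, splitting into the case $\lambda_i^{Nc}\neq\lambda^c_j$ (where $\Enc(x,y)\cap T_yN_x=\{0\}$ and a sandwich works because $\|D\pi^{-1}|_{\Enc}\|$, controlled by the angle between $\Enc$ and $T_yN_x$, is subexponential by Oseledets regularity) and the overlap case $\lambda_i^{Nc}=\lambda^c_j$ (handled by lifting a vector of a putatively different $Df$-exponent and reaching a contradiction using the boundedness of $D\pi$, then applying the same to $F^{-1}$). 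Your route buys economy by citing a general restriction/quotient-spectrum theorem, which the paper in effect re-proves in this setting — the angle estimate is precisely what guarantees the quotient exponents are well defined and equal those of $Df$. The cost is that the overlap case, where an Oseledets space of $DF$ meets $TN_x$ nontrivially, is dealt with only through the filtration sketch at the end; the paper's contradiction argument handles it explicitly. One small slip to fix: in the overlap case $\Enc$ does \emph{not} project isomorphically onto the quotient Oseledets space — the restriction of $D\pi$ to $\Enc$ has kernel $\Enc\cap T_yN_x$ — so the claim must be about the \emph{image} $D\pi(x,y)\Enc(x,y)=\Ef(x)$ rather than about an isomorphism, which is indeed how you phrase it in the final sentence and how the lemma states it.
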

\begin{proof}
Recall that we denote by $\lambda_1^c(F,x,y)\geq \lambda_2^c(F,x,y)$ the Lyapunov exponents corresponding to the fiber.  
Take a $\mu$ total measure set of $M\times N$ such that the Oseledets theorem is valid for $F$ and every $(x,y)$, and also for every $f$ and every $x$.

Take $(x,y)$ in this set, fix $1\leq i\leq d$ and let $v^i\in \Enc(x,y)$. Observe that 
$$
\begin{aligned}
Df(x)D\pi(x,y)\Enc(x,y)&=D\pi(F(x,y))DF(x,y)\Enc(x,y)\\
&=D\pi(F(x,y))\Enc(F(x,y)),
\end{aligned}
$$ so $D\pi(x,y)\Enc(x,y)$ is $Df$ invariant. 

Now let us assume that $\lambda_i^{Nc}(F,x,y)\neq \lambda_j^c(F,x,y)$, $j=1,2$, then we know that $\Enc(x,y)$ does not intersect $T_y N_x$. Observe that $D\pi(x,y)$ preserves the dimension of $\Enc(x,y)$ and $\|D\pi(x,y)^{-1}\|$ is proportional to the angle between $\Enc(x,y)$ and $T_y N_x$, as $T_y N_x=E_1(x,y)\oplus E_2(x,y)$, by Oseledets Theorem it decrease sub exponentially along the orbit of $(x,y)$. 

Then we have 
$$
\begin{aligned}
\lambda_i^{Nc}(F,x,y)&=\lim \frac{1}{n}\log\| DF^n(x,y) v^i\|\\
&\geq \lim \frac{1}{n}\log\| D\pi(x,y) DF^n(x,y) v^i\|\\
& =\lim \frac{1}{n}\log\| Df^n(x)D\pi(x,y) v^i\|
\end{aligned}
$$ and also $$
\begin{aligned}
\lambda_i^{Nc}(F,x,y)&\leq \lim \left(\frac{1}{n}\log\| D\pi(F^n(x,y)) DF^n(x,y) v^i\|+\frac{1}{n}\log\| D\pi(F^n(x,y))^{-1} \| \right)\\
& =\lim\frac{1}{n}\log\| Df^n(x)D\pi(x,y) v^i\|+\lim \frac{1}{n}\log\| D\pi(F^n(x,y))^{-1} \|, 
\end{aligned}
$$
So we conclude that 
$\lim \frac{1}{n}\log\| Df^n(x)D\pi(x,y) v^i\|=\lambda_i^{Nc}(F,x,y)$. 

Now take the case that $\lambda^{Nc}_i(F,x,y)=\lambda^c_1(F,x,y)$, this implies that $\Enc(x,y)\supset E_1(x,y)$ properly. 
As $D\pi(x,y)\Enc(x,y)$ is invariant, it has an Oseledets sub decomposition. In particular, we can consider a vector $v\in D\pi(x,y)\Enc(x,y)$ with the property that  $\lim_{n\to\pm\infty}\frac{1}{n}\log \|Df^n(x)v\|= \lambda(f,x)$. Assume $\lambda(f,x)>\lambda_1(F,x,y)$ and take $V\in \Enc(x,y)$ with $D\pi(x,y)V=v$.

Then, $D\pi DF^n(x,y)V=Df^n(x)v$ growth exponentially with exponent $\lambda(f,x)$, a contradiction. If $\lambda(f,x)<\lambda_1(F,x,y)$ taking the inverse, we also get a contradiction. Therefore, we conclude that $\lambda(f,x)=\lambda^{Nc}_i(F,x,y)$. Analogously if 
$\lambda^{Nc}_i(F,x,y)=\lambda^c_2(F,x,y)$.

By the uniqueness of the Oseledets decomposition the result follows.
\end{proof}

\begin{proof}[Proof of Theorem~\ref{thm-smooth-2}]
Consider a sequence of ergodic measures $\mu_k$ such that $\mu_k\to \mu$ and $h_{\mu_k}(F)\to h_{top}(F)$. 
By the semi-continuity of the entropy for $C^{\infty}$ diffeomorphisms we know that $\mu$ is an MME and the ergodic decomposition of $\mu$ is composed by MMEs $\mu_e$. As $h_{top}(F)>h_{top}(f)$, by Theorem~\ref{T2}, $\lambda^c_1(F,\mu_e)\geq h_{top}(F)-h_{top}(f)$ and $h_{top}(f)-h_{top}(F)\geq\lambda^c_2(F,\mu_e)$ for every $\mu_e$, then for $\mu$- a.e. $(x,y)$, we conclude, $$\lambda^c_1(F,\mu_e)\geq h_{top}(F)-h_{top}(f)>h_{top}(f)-h_{top}(F)\geq \lambda^c_2(F,\mu_e).$$

By \cite{LW}, $h_{\mu}(F\mid \pi)\leq h_{top}(F\mid\pi)$. Therefore, since $h_{top}(F)>h_{top}(F\mid\pi)$, for $\pi_*\mu$ almost every $x$, $$\lambda_1(f,x)\geq h_{top}(F)-h_{top}(F\mid\pi)>h_{top}(F\mid\pi)-h_{top}(F)\geq \lambda_2(f,x).$$ Then, Lemma~\ref{lema-exponents} implies that $F$ is entropy hyperbolic. 

By Proposition~\ref{prop-beta-1}, we know that $h_{\pi_*\mu_k}(f)\to h_{\pi_*\mu}(f)$. By \cite{BCS}, this implies that there exist $0<\beta\leq 1$, $\nu_0$ and $\nu_1$ such that $\pi_{*}\mu=\beta\nu_1+(1-\beta)\nu_0$, $h_{\pi_*\mu}(f)\leq\beta h_{\nu_1}(f)$ and $$\lambda_1(f,\pi_*\mu_k)\to \int \lambda_1(f,x)d\nu_1.$$ Since $h_{top}(F)>h_{top}(F\mid\pi)$, we conclude that $h_{\nu_0}(f)>0$, therefore $\beta=1$ and $$\lambda_1(f,\pi_*\mu_k)\to \int \lambda_1(f,x)d\pi_*\mu.$$

Observe that $$
\Lambda^+(\mu_k)=\lambda^c_1(F,\mu_k)+\lambda_1^{Nc}(F,\mu_k).
$$
So by Proposition~\ref{prop-beta-1} and Lemma~\ref{lema-exponents}, we conclude that $\Lambda^+$ is entropy continuous.

\end{proof}

\begin{proof}[Proof of Theorem~\ref{thm-anosov}]
Observe that as $f$ is Anosov, by Lemma~\ref{lema-exponents}, $|\lambda_i^{Nc}(F,x,y)|$ are bounded away from zero, and the integral of the sum of the positive ones varies continuously with respect to the invariant measure. Moreover, by Proposition~\ref{prop-beta-1} there exists a measure of maximal entropy.

As in the previous proof we can conclude that $F$ is entropy hyperbolic and $\Lambda^+$ is entropy continuous. 

\end{proof}

{\em{Acknowledgments.}} KM and FV thanks the hospitality of Universidade Federal do Cear\'a where the main part of this work was elaborated.

\end{document}